\title[Eulerian dynamics with radial symmetry]{
  Eulerian dynamics in multi-dimensions with radial symmetry}
\author[Changhui Tan]{Changhui Tan}
\address[Changhui Tan]{\newline Department of Mathematics, \ 
 University of South Carolina, 1523 Greene St., Columbia, SC 29208, USA}
\email{tan@math.sc.edu}
\thanks{\textit{Acknowledgment.} This work has been supported by the NSF grant
DMS 1853001. }
\subjclass[2010]{35Q35}
\keywords{Eulerian dynamics, Burgers equation, multi-dimension, radial
symmetry, Euler-Poisson equations, Euler-alignment equations}
\newtheorem{theorem}{Theorem}[section]
\newtheorem{lemma}[theorem]{Lemma}
\newtheorem{proposition}[theorem]{Proposition}
\newtheorem{remark}[theorem]{Remark}
\newtheorem{definition}{Definition}[section]
\def\R{\mathbb{R}}
\def\pa{\partial}
\def\F{\mathbf{F}}
\def\e{\mathbf{e}}
\def\u{\mathbf{u}}
\def\x{\mathbf{x}}
\def\y{\mathbf{y}}
\def\z{\mathbf{z}}
\def\grad{\nabla}
\def\div{\grad\cdot}
\def\psim{\underline{\psi}}
\def\tr{\text{tr}}
\def\psiM{{\psi_M}}
\def\psim{\nu}
\begin{document}
\allowdisplaybreaks

\begin{abstract}
 We study the global wellposedness of pressure-less Eulerian dynamics in
 multi-dimensions, with radially symmetric data.
 Compared with the 1D system, a major difference in
 multi-dimensional Eulerian dynamics is the presence of the \emph{spectral
 gap}, which is difficult to control in general.
 We propose a new pair of scalar quantities that provides a
 significant better control of the spectral gap. Two applications are
 presented. (i) the Euler-Poisson equations: we show a \emph{sharp}
 threshold condition on initial data that distinguish global
 regularity and finite time blowup; (ii) the Euler-alignment equations: we
 show a large subcritical region of initial data that leads to
 global smooth solutions. 
\end{abstract}

\maketitle 

\vspace{-.2in}
{\small\tableofcontents}

\section{Introduction}\label{sec:intro}
We consider the following pressure-less Euler equation with forces
\begin{align}
& \pa_t\rho+\div(\rho\u)=0,\label{eq:density}\\
& \pa_t(\rho\u)+\div(\rho\u\otimes\u)=\rho\F,\label{eq:momentum}
\end{align}
subject to the initial condition
\begin{equation}\label{eq:init}
  \rho(\x,t=0)=\rho_0(\x),\quad \u(\x,t=0)=\u_0(\x).
\end{equation}
Here, $\rho:\R^n\times\R_+\to\R$ represents the density of the fluid,
and $\u:\R^n\times\R_+\to\R^n$ is the flow velocity.
$\F$ is a general forcing acting on the flow. It could depend on $\rho$ and $\u$.

The Eulerian dynamics \eqref{eq:density}-\eqref{eq:momentum}
is a fundamental system of equations in fluid mechanics.
It has a vast amount of applications with different choices of 
forces $\F$.
A big challenging and demanding question is to understand whether the
solutions are globally regular, or there could be singularity
formations in finite time.

\subsection{Spectral dynamics and the spectral gap}
The momentum equation \eqref{eq:momentum} can be equivalently written as the
following dynamics of the velocity $\u$, in the non-vacuous region
\begin{equation}\label{eq:velo}
\pa_t\u+(\u\cdot\grad)\u=\F.
\end{equation}
When $\F\equiv0$, \eqref{eq:velo} is the classical \emph{inviscid
  Burgers equation}.
It is well-known that the solution admits a finite time shock
formation, for any generic smooth initial data.
Indeed, in one dimension, taking $x$-derivative of the equation,
one immediately obtain
$(\pa_t+u\pa_x)(\pa_xu)=-(\pa_xu)^2$.
This yields a Ricatti equation of $\pa_xu$ along the characteristic
paths, which governs the main structure of the solution:
blowup happens in finite time if initially $\pa_xu_0(x)<0$.
The idea of tracing the dynamics of $\pa_xu$ also works very well for
1D models of the type \eqref{eq:velo}, with different forcing terms.

In multi-dimensions, taking the spatial gradient of \eqref{eq:velo} would yield 
\begin{equation}\label{eq:gradu}
  (\pa_t+\u\cdot\grad)\grad\u=-(\grad\u)^{\otimes2}+\grad\F,
\end{equation}
where the velocity gradient $\grad\u$ is an $n$-by-$n$ matrix.
In many applications, the boundedness of $\grad\u$ plays a crucial
role in the propagation of the regularity of the solution.
A natural question would be
\begin{center}
\emph{Which scalar quantities exhibit the same Ricatti structure  as
$\pa_xu$ in 1D?}
\end{center}

One candidate is the set of eigenvalues of $\grad\u$, denoted by
$\{\lambda_i\}_{i=1}^n$.
Indeed, when $\F\equiv0$, the dynamics of $\lambda_i$, known as the
\emph{spectral dynamics}, satisfies the same
Ricatti equation as 1D: $(\pa_t+\u\cdot\grad)\lambda_i=-\lambda_i^2$.
It can be solved explicitly along the characteristic paths,
deducing a similar blowup phenomenon,
despite of the fact that $\lambda_i$ could be complex-valued.

With the forcing term, the spectral dynamics of \eqref{eq:gradu} has
the form
\begin{equation}\label{eq:lambF}
  (\pa_t+\u\cdot\grad)\lambda_i=-\lambda_i^2+l_i^T(\grad\F) r_i,\quad i=1,\cdots,n,
\end{equation}
where $(l_i, r_i)$ are the corresponding left and right eigenvectors
of $\lambda_i$.
It has been studied extensively in \cite{liu2002spectral}.
Although one can largely benefit from the
explicit Ricatti structure, it is in general hard to control
$l_i^T(\grad\F) r_i$, as in many cases $\grad\F$ does not share the
same eigenvectors with $\grad\u$.

Another natural replacement of $\pa_xu$ in multi-dimensions would be
\emph{the divergence}
\[d:=\div\u=\text{tr}(\grad\u)=\sum_{i=1}^n\lambda_i,\]
whose dynamics can be obtained by taking the trace of
\eqref{eq:lambF}. It reads
\[(\pa_t+\u\cdot\grad)d = -\tr\left((\grad\u)^{\otimes2}\right)+\div\F.\]

Investigating the dynamics of $d$ has a couple of advantages.
First, $d$ is real-valued. More importantly, it is more friendly to
the forcing term, as $\div\F$ is much easier to
handle (compared with $l_i^T(\grad\F) r_i$) in many applications.

However, the term $\tr\left((\grad\u)^{\otimes2}\right)\neq d^2$, for
$n\geq2$. 
The difference is related to the \emph{spectral gap} of the matrix
$\grad\u$, defined as
\begin{equation}\label{eq:spectralgap}
  \eta =\frac{1}{2}\sum_{i=1}^n\sum_{j=1}^n(\lambda_i-\lambda_j)^2.
\end{equation}
Indeed, it is easy to check that the difference
\begin{equation}\label{eq:diff}
  d^2-\tr\left((\grad\u)^{\otimes2}\right)=\frac{n-1}{n}d^2-\frac{1}{n}\,\eta.
\end{equation}
Therefore, to make use of the Ricatti structure and to
extend 1D regularity results to multi-dimensions, one needs to
additionally control the spectral gap, which turns out to be a
difficult task. As we will argue in
Remark \ref{rem:divnogood}, $\div\u$ might not be a good replacement
for $\pa_xu$, due to the presence of the spectral gap.

In the following, we focus on two classical models on Eulerian dynamics with
nonlocal interaction forces.

\subsection{The Euler-Poisson equations}
The Euler-Poisson equations is a fundamental system in plasma
physics. It describes the electron fluid interacting with its own
electric field against a charged ion background \cite{guo1998smooth}.
The pressure-less Euler-Poisson equations have the form
\eqref{eq:density}-\eqref{eq:momentum}, with the force
\begin{equation}\label{eq:EPforce}
  \F=-\kappa\grad(-\Delta)^{-1}(\rho-c),
\end{equation}
where the parameter $\kappa$ denotes the strength of the charge force,
and $c\geq0$ is a constant background.

The 1D Euler-Poisson equation has been studied extensively in
\cite{engelberg2001critical}, where a sharp critical threshold on the
initial data is obtained that distinguishes the global
wellposedness of solutions and the finite-time singularity formations.
The result is extended to the system with pressure in \cite{tadmor2008global}.

However, in higher dimensions, global wellposedness remains to be a
challenging open problem.
In the case where pressure is presented, global solutions can be
obtained for small initial data perturbed from a constant state
\cite{guo1998smooth,jang2014smooth}, leveraging the dispersive
structure. For the pressureless system, very little is known, even for
small initial data.
The main difficulty on the spectral analysis \eqref{eq:lambF} is that
\[\grad\F=-\kappa\grad\otimes\grad(-\Delta)^{-1}(\rho-c)\]
is a nonlocal Reisz transform on $\rho$, which is hard to control.

An important observation is that, $\div\F=\kappa(\rho-c)$ depends only
on local information of $\rho$. Therefore, the force is more friendly
when tracing the dynamics of the divergence
\[d' = -\tr\left((\grad\u)^{\otimes2}\right)+\kappa(\rho-c).\]
This approach has been studied in \cite{tadmor2003critical}.
Although the forcing term is much easier to handle, the major difficulty is
shifted to the control of the spectral gap \eqref{eq:spectralgap},
which depends non-locally on $\rho$ and $d$.
A restricted Euler-Poisson (REP) equation is introduced in
\cite{tadmor2003critical}, with modifications on the
$\tr\left((\grad\u)^{\otimes2}\right)$ term so that the spectral gap
becomes locally dependent on $\rho$.
However, the result can not be easily extended to the Euler-Poisson
equations due to the lack of control on the spectral gap.

\subsection{The Euler-alignment equations}
Another model of Eulerian dynamics is called the Euler-alignment
system, where
\begin{equation}\label{eq:EAforce}
  \F=\int\phi(\x-\y)(\u(\y,t)-\u(\x,t))\rho(\y,t)\,d\y.
\end{equation}
It is the macroscopic representation of the Cucker-Smale model
\cite{cucker2007emergent}, describing the emergent behavior in animal
flocks.
The $\F$ is called a nonlocal alignment force, where $\phi$ is the
influence function that measures the strength of the influence
between a pair of agents.
The Euler-alignment system was first introduced and formally derived
in \cite{ha2008particle}, with rigorous justifications in
\cite{figalli2018rigorous}.

The Euler-alignment system has been studied in
\cite{tadmor2014critical}. The result contains threshold conditions
on initial data which leads to global regularity or finite-time
singularity formations, in both 1D and 2D. In particular, the 2D
result is obtained by tracing the dynamics of $d$, together with a control
of the spectral gap. The conditions are not sharp, due to the
non-locality of the alignment force.

In a successive work \cite{carrillo2016critical}, a remarkable
commutator structure in $\F$ was discovered, which leads to a sharp
critical threshold that distinguishes global regularity and finite
time blowup of the solutions, for the system in 1D.
It also reveals intriguing connections to other models in fluid
mechanics. Then, theories on global solutions are developed in 1D for
different types of influence functions, including strongly singular
alignment \cite{do2018global,shvydkoy2017eulerian}, weakly singular
alignment \cite{tan2020euler}, as well as misalignment
\cite{miao2020global}. Different behaviors are observed in each
case. In particular, with strongly singular alignment, the system
becomes dissipative, and all smooth
non-vacuous initial data leads to global regularity.
All 1D results are sharp.

For the multi-dimensional Euler-alignment system, much less is known in
regards to global regularity. In \cite{he2017global}, improved
threshold conditions are derived in 2D, taking advantage of the commutator
structure, which turns out to be the same as 1D in the dynamics of
$d$. However, the result is far from optimal, as one needs to
additionally control the spectral gap.
With strongly singular alignment, global regularity is proved in
\cite{shvydkoy2019global}, for small initial data near the steady
state. The result is much weaker than 1D. The smallness condition is
used to control the spectral gap.
\medskip

The two models above are two examples of Eulerian dynamics, where
the global regularity theory is much less developed in
multi-dimensions, compared with one-dimension.
The major difficulty is to \emph{control the effect of the spectral
  gap}.

In this paper, we study the Eulerian dynamics with radially symmetric
initial data.
Despite the redial symmetry, the effect of the spectral gap still
persists (see \eqref{eq:sgrelation}).
We propose a new pair of scalar quantities as the replacement of
$\pa_xu$ in 1D. Compared with $\{\lambda_i\}_{i=1}^n$, the quantities
are real-valued, and are more friendly to the forcing term. Compared
with $d$, the dynamics of the quantities have the precise Ricatti
structure, so we can avoid a direct nonlocal control of the spectral
gap. 

The newly proposed quantities allow us to obtain significantly better
regularity results for Eulerian dynamics in multi-dimensions with
radial symmetry. We apply the idea to the Euler-Poisson and the
Euler-alignment equations. Further extension can be made to a large
class of Eulerian dynamics with different forcing terms.

For the Euler-Poisson equations, we obtain a \emph{sharp} threshold
condition, stated in Theorem \ref{thm:EP}.
This is the first sharp result on the Euler-Poisson system
in multi-dimensions for all smooth radially symmetric initial data
(see Remark \ref{rem:EPsharp} for more discussions).
For the Euler-alignment equations, we show global regularity with a
large region of initial data,  in Theorem \ref{thm:EA}.
Although the result is not sharp, it significantly improves
the existing results in the literature (see Remark \ref{rem:EAimprove}).

The rest of the paper is organized as follows. In section
\ref{sec:radial}, we introduce the new scalar quantities, and state
our main results.
We will then discuss the Euler-Poisson equations and the
Euler-alignment equations in sections \ref{sec:EP} and
\ref{sec:EA} respectively.
We end the paper with some further discussion in section \ref{sec:conclusion}.

\section{Radially symmetric solutions and the new scalar quantities}\label{sec:radial}
We focus on a special type of solutions for the Eulerian dynamics
\eqref{eq:density}-\eqref{eq:momentum}, with
radial symmetry and without swirl
\begin{equation}\label{eq:radial}
  \rho(\x,t)=\rho(r,t),\quad\u(\x,t)=\frac{\x}{r}u(r,t).
\end{equation}
Here, $r=|\x|\in\R_+$ is the radial variable. $\rho$ and  $u$ are scalar
functions defined in $\R_+\times\R_+$.
Appropriate boundary conditions at $r=0$ are assumed, to ensure regularity
of $\rho$ and $\u$ at the origin, for instance, $u(0,t)=0$, $\pa_r\rho(0,t)=0$.
In all examples that we concern, the force takes the form
\begin{equation}\label{eq:forceradial}
  \F(\x,t)=\frac{\x}{r}F(r,t).
\end{equation}
So, the radial symmetry is preserved in time.

Our goal is to find appropriate scalar quantities that serve as the
multi-dimensional replacement of $\pa_xu$ that exhibit the Ricatti
structure, and meanwhile help us control the spectral gap $\eta$ as
well as the force $F$.

Let us first calculate the divergence
\begin{equation}\label{eq:divrelation}
  d=\div\u=u_r+(n-1)\frac{u}{r},
\end{equation}
and the difference in \eqref{eq:diff} (representing the spectral gap $\eta$)
\begin{equation}\label{eq:sgrelation}
  d^2-  \tr\left((\grad\u)^{\otimes2}\right)=
  2(n-1)u_r\frac{u}{r}+(n-1)(n-2)\frac{u^2}{r^2}.
\end{equation}
Clearly, the term in \eqref{eq:sgrelation} does not vanish in the
radially symmetric setup, and can not be determined by
local information in $d$.

A remarkable observation is that, both the divergence and the
difference can be determined by \emph{local} information of the two
quantities $u_r$ and $\frac{u}{r}$.
In fact, the spectral gap $\eta=\frac{1}{n-1}(u_r-\frac{u}{r})^2$.

Hence, we propose to use the pair
\begin{equation}\label{eq:pair}
  (p, q):=\left(u_r, \frac{u}{r}\right)
\end{equation}
as the multi-dimensional replacement of $\pa_xu$.

Note that the boundedness of the pair \eqref{eq:pair} is equivalent to
the boundedness of $\grad\u$, the quantities that play a crucial
role in preserving the regularity of the solution.
\begin{proposition}\label{prop:equivgradu}
  Suppose $u_r$ and $\frac{u}{r}$ are bounded. Then, $\grad\u$ is bounded.
\end{proposition}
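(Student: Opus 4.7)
The plan is to perform a direct pointwise computation of $\grad\u$ under the radial ansatz $\u(\x,t)=(\x/r)u(r,t)$ and exhibit $\grad\u$ explicitly as a linear combination of the identity matrix and the radial rank-one projection, with coefficients given precisely by $q=u/r$ and $p-q=u_r-u/r$.

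First I would apply the chain rule together with the elementary identities $\pa_j r = x_j/r$ and $\pa_j(x_i/r)=\delta_{ij}/r-x_ix_j/r^3$ to compute component-wise
\[
\pa_j u_i = \frac{u}{r}\,\delta_{ij} + \Bigl(u_r-\frac{u}{r}\Bigr)\frac{x_ix_j}{r^2},
\]
which in matrix form reads
\[
\grad\u \;=\; q\,I_n \;+\; (p-q)\,\hat{\x}\otimes\hat{\x},\qquad \hat{\x}=\x/r.
\]
Because $\hat{\x}\otimes\hat{\x}$ is the orthogonal projection onto the radial direction, its operator norm equals $1$; equivalently, each of its entries satisfies $|x_ix_j/r^2|\le1$. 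The triangle inequality then yields a pointwise estimate of the form $|\grad\u(\x,t)|\le C_n\bigl(|p(r,t)|+|q(r,t)|\bigr)$, for a dimensional constant $C_n$, which is exactly the desired conclusion for $\x\neq 0$.

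The only point that deserves a quick check is the behavior at $r=0$. But the factor $x_ix_j/r^2$ is uniformly bounded wherever it is defined, and $u/r$ is bounded by hypothesis; together with the standing regularity assumption $u(0,t)=0$, this means the expression for $\grad\u$ extends continuously to the origin, and the estimate holds uniformly in $\x$. I do not anticipate any substantive obstacle here: the proposition is essentially an algebraic identity followed by a triangle inequality, and its real content is to justify the choice of the pair $(p,q)$ in \eqref{eq:pair} as a faithful multi-dimensional replacement of $\pa_xu$, whose boundedness controls the full velocity gradient.
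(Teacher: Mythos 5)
Your computation is exactly the identity the paper uses, namely $\pa_{x_j}u_i=\frac{x_ix_j}{r^2}u_r+\bigl(\delta_{ij}-\frac{x_ix_j}{r^2}\bigr)\frac{u}{r}$, just rearranged, and the boundedness then follows by the triangle inequality as you say. The proposal is correct and takes essentially the same approach as the paper.
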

\begin{proof}
  It follows from the direct computation
  \[\pa_{x_j}u_i=\frac{x_ix_j}{r^2}u_r+\left(\delta_{ij}-\frac{x_ix_j}{r^2}\right)\frac{u}{r},\]
  where $\delta_{ij}$ is the Kronecker delta.
\end{proof}

In the following, we argue that the pair \eqref{eq:pair} is a better
replacement of $\pa_xu$, compared with $\{\lambda_i\}_{i=1}^n$ and $d$.
We proceed with four examples: the inviscid Burgers equation, the
damped Burgers equation, the Euler-Poisson equations, and the
Euler-alignment equations.

\subsection{The inviscid Burgers equation}
Consider the inviscid Burgers equation \eqref{eq:velo} with
$\F\equiv0$
\[\u_t+(\u\cdot\grad)\u=0,\]
under the radially symmetric setup \eqref{eq:radial}. 
The dynamics of the pair \eqref{eq:pair} reads
\[
  \begin{cases}
    p'=-p^2,\\
    q'=-q^2,
  \end{cases}
\]
where $'=\pa_t+u\pa_r$ denotes the material derivative.
It is a decoupled system, with two Ricatti equations the same as
\eqref{eq:lambF}. This immediately implies a sharp global regularity
result.

\begin{theorem}
  The solution of the radially symmetric inviscid Burgers equation is
  globally regular, if and only if
  \[u^0_r(r)\geq0,\quad\text{and}\quad \frac{u^0(r)}{r}\geq0.\]
\end{theorem}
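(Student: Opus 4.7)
The plan is to exploit the decoupled Ricatti structure for $(p,q) = (u_r, u/r)$ that the author just derived. Along a characteristic $r(t)$ satisfying $\dot r = u(r,t)$ with $r(0) = r_0$, I integrate the two Ricatti equations explicitly to obtain
\begin{equation}
p(t) = \frac{u^0_r(r_0)}{1 + t\,u^0_r(r_0)}, \qquad q(t) = \frac{u^0(r_0)/r_0}{1 + t\,u^0(r_0)/r_0}.
\end{equation}
Each stays bounded for all $t \geq 0$ precisely when the corresponding initial value is nonnegative; otherwise it blows up at the finite time $t_\star = -1/(\text{initial value})$. The theorem then reduces to transferring this per-characteristic dichotomy to the regularity of $\u$.

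For the \emph{only if} direction, I suppose $u^0_r(r_0) < 0$ or $u^0(r_0)/r_0 < 0$ at some $r_0 > 0$. Along the characteristic through $r_0$, one of $p,q$ blows up at finite $t_\star$. The identity
\[
\pa_{x_j} u_i = \frac{x_i x_j}{r^2}\, u_r + \left(\delta_{ij} - \frac{x_i x_j}{r^2}\right)\frac{u}{r}
\]
from the proof of Proposition \ref{prop:equivgradu} exhibits $u_r$ and $u/r$ as the eigenvalues of $\grad\u$ in the radial and tangential directions, so the blowup of $p$ or $q$ forces $\grad\u$ to become unbounded, ruling out a globally regular solution.

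For the \emph{if} direction, assume $u^0_r \geq 0$ and $u^0/r \geq 0$ on $\R_+$. Then $q(t) \geq 0$ along every characteristic, so $u \geq 0$ is maintained and characteristics move radially outward; since $p(t) \geq 0$, the map $r_0 \mapsto r(t;r_0)$ is monotone, so characteristics never cross. The characteristic from the origin stays put because $u(0,t) = 0$. Consequently, every $(r,t)$ lies on a unique characteristic from some $r_0 \geq 0$, and $(p,q)$ remains uniformly bounded (in fact nonincreasing) for all $t \geq 0$. Proposition \ref{prop:equivgradu} then yields a uniform $L^\infty$ bound on $\grad\u$.

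The final step is a standard local-wellposedness plus continuation argument: radial symmetry is preserved by \eqref{eq:forceradial} with $F \equiv 0$, and a uniform $L^\infty$ bound on $\grad\u$ prevents blowup of higher Sobolev norms of $\u$. I expect this last step to be the only delicate point, mainly requiring care with the behavior of $q = u/r$ near the origin in order to propagate higher-order derivatives; everything else is an explicit ODE calculation along characteristics.
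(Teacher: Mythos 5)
Your proposal is correct and follows essentially the same route as the paper: the decoupled Ricatti equations give boundedness of $(p,q)$ if and only if the initial data are nonnegative, and Proposition \ref{prop:equivgradu} together with the standard continuation criterion converts this into the regularity dichotomy. You simply spell out the explicit characteristic-wise solutions and the eigenvalue interpretation of $(u_r, u/r)$ that the paper leaves implicit.
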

\begin{proof}
  The Ricatti structure implies that $(p,q)$ are uniformly bounded in
  time if and only if the condition holds. Global regularity then
  follows from Proposition~\ref{prop:equivgradu} and the classical
  equivalency between boundedness of $\grad\u$ and global regularity.
\end{proof}

\begin{remark}
From \eqref{eq:divrelation}, we know the divergence $d$ is a linear
combination of $(p,q)$. However, due to the nonlinear evolution of
$(p,q)$, we have
\[d'=p'+(n-1)q'=-p^2-(n-1)q^2\neq -d^2,\]
and the difference \eqref{eq:sgrelation} can not be expressed locally
in terms of $d$, and additional nonlocal control is required on the
spectral gap.
This indicates the advantage of studying the pair $(p,q)$ compared
with the divergence $d$.
\end{remark}

\subsection{The damped Burgers equation}
Let us consider another example \eqref{eq:velo}, with a damping force
$\F=-\kappa\u$. This corresponds to the damped Burgers
equation
\begin{equation}\label{eq:dampedBurgers}
  \u_t+(\u\cdot\grad)\u=-\kappa\u.
\end{equation}
Similarly, one can obtain the dynamics of the pair \eqref{eq:pair}
under the radial symmetric setup
\[
  \begin{cases}
    p'=-p^2-\kappa p,\\
    q'=-q^2-\kappa q.
  \end{cases}
\]
Solving the decoupled system, we obtain
\begin{theorem}\label{thm:dampedBurgers}
  The radially symmetric solutions of the damped Burgers equation
  \eqref{eq:dampedBurgers} are globally regular, if and only if
  \begin{equation}\label{eq:CTdampedBurgers}
    u^0_r(r)\geq-\kappa,\quad\text{and}\quad
    \frac{u^0(r)}{r}\geq-\kappa.
  \end{equation}
\end{theorem}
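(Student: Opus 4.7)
The plan is to exploit the fact that, as stated just before the theorem, along characteristics $' = \pa_t + u\pa_r$ the pair $(p,q) = (u_r, u/r)$ obeys a \emph{decoupled} pair of Ricatti equations with linear damping, so each can be solved explicitly and the sharp blowup criterion reads off directly.

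First I would set up the Lagrangian trajectories $r(t)$ with $\dot r = u(r(t),t)$, $r(0)=r_0$, and view $P(t) := p(r(t),t)$ and $Q(t) := q(r(t),t)$ as functions of $t$ alone, satisfying $\dot P = -P^2 - \kappa P$ and $\dot Q = -Q^2 - \kappa Q$. Next I would solve each ODE by the standard linearizing substitution $Y = 1/P$, which turns $\dot P = -P(P+\kappa)$ into the linear ODE $\dot Y = 1 + \kappa Y$, whose solution is
\begin{equation}
Y(t) = \left(\frac{1}{P(0)} + \frac{1}{\kappa}\right) e^{\kappa t} - \frac{1}{\kappa}.
\end{equation}
From this I would read off that $P(t)$ stays bounded for all $t \ge 0$ precisely when $Y(t)$ never vanishes on $[0,\infty)$, which happens if and only if $P(0) \ge -\kappa$ (the borderline case $P(0)=-\kappa$ is a stationary solution of the Ricatti equation). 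An identical analysis applies to $Q$.

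For the sufficiency direction, assuming \eqref{eq:CTdampedBurgers} holds pointwise, the explicit formula gives a uniform-in-$(r_0,t)$ bound on $(P(t),Q(t))$, hence on $(p,q)$ along every characteristic. Since characteristics foliate $\R_+\times\R_+$, we obtain a global-in-time $L^\infty$ bound on $(u_r, u/r)$, which via Proposition~\ref{prop:equivgradu} yields a uniform bound on $\grad\u$; the classical Beale--Kato--Majda-type continuation criterion then delivers global regularity. For the necessity direction, if either condition fails at some $r_*$ then the corresponding ODE has initial data strictly below $-\kappa$, and the explicit formula shows $Y$ vanishes at a finite time $T_* = \kappa^{-1}\log\frac{P(0)}{P(0)+\kappa}>0$, forcing $|P(t)| \to \infty$ as $t\uparrow T_*$. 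Transferring back, $\grad\u$ becomes unbounded along this characteristic before $T_*$, precluding global smoothness.

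The only genuine subtlety I anticipate is justifying the Lagrangian framework on $[0,T_*)$ in the necessity direction: one must ensure that the characteristic emanating from $r_*$ stays in the regular region long enough for the ODE argument to reach the blowup time, and that $q$ remains well-defined (i.e., the characteristic does not collapse to the origin prematurely, which would confuse the definition of $u/r$). This is handled by noting that as long as the solution remains $C^1$ the characteristics exist and are distinct, and the regularity of $u$ at $r=0$ imposed in \eqref{eq:radial} guarantees $u(0,t)=0$, so characteristics starting at $r_*>0$ cannot cross the origin while the solution is smooth. Once this is in place the proof is a two-line consequence of the explicit Ricatti integration.
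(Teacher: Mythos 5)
Your proposal is correct and follows the same route as the paper: the paper simply states that the pair $(p,q)=(u_r,u/r)$ satisfies the decoupled damped Riccati equations along characteristics and that "solving the decoupled system" yields the threshold, with regularity then following from Proposition~\ref{prop:equivgradu} and the continuation criterion. You merely fill in the explicit integration via $Y=1/P$ (your formula for $Y(t)$ and the blowup time are both correct) and the minor characteristic-tracking subtlety, which the paper leaves implicit.
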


\begin{remark}\label{rem:divnogood}
  For the 1D damped Burgers equation, a solution is regular if and only
  if $u_0'(x)\geq-\kappa$ for all $x\in\R$. One would naturally think
  $d_0=\div\u_0\geq-\kappa$ would be the condition in the
  multi-dimensional case. However, this is neither a
  sufficient nor a necessary condition of \eqref{eq:CTdampedBurgers}.
  This is an indication that the divergence $d$ does not serve as a good
  replacement of $\pa_xu$ in the multi-dimensional cases.
\end{remark}

For both inviscid and damped Burgers equations, working directly with
the eigenvalues $\{\lambda_i\}_{i=1}^n$ of $\grad\u$ will yield the same result, even for
general initial data that does not require radial symmetry. This is
due to the simple structure of the force $\F$, which is either $0$ or
$-\kappa\u$. In both cases,
$\grad\F$ shares the same eigenvectors as $\grad\u$, and so the term
$l_i^T(\grad\F)r_i$ in \eqref{eq:lambF} is simply $0$ or
$-\kappa\lambda_i$, respectively.
However, for more general interacting forces, spectral dynamics are
hard to trace, as the forcing term $\grad\F$ is difficult to control.

The new paired quantities in \eqref{eq:pair} have a big advantage in
dealing with general nonlocal forces.
In the following, we focus on the two examples: the Euler-Poisson
equations, and the Euler-alignment equations. We show strong
regularity results for these systems, thanks to our new paired quantities.

\subsection{Main results}
First, we consider the Euler-Poisson equations
\eqref{eq:density}-\eqref{eq:momentum} and \eqref{eq:EPforce} under
the radially symmetric setup \eqref{eq:radial}.
The parameter $\kappa>0$, representing the strength of the repulsive
force. The parameter $c$ can be either zero, or a positive constant,
which corresponds to two scenarios: zero background, and constant
background. The solutions under the two cases are known to have
very different asymptotic behaviors.

\begin{theorem}[Sharp threshold condition for the Euler-Poisson
  equations]\label{thm:EP}
  Consider the Euler-Poisson equation
  \eqref{eq:density}-\eqref{eq:momentum} and \eqref{eq:EPforce}  with
  smooth initial data
  $\rho_0-c\in H^s(\R^n)$ and $\u_0\in H^{s+1}(\R^n)^n$, for
  $s>\frac{n}{2}$, and satisfying the radial symmetry
  \eqref{eq:radial}.
  Then, there exists a region $\Sigma\in\R^4$,
  defined in Definition \ref{def:EPsub}, depending on $n, \kappa, c$, such that
  \begin{itemize}
  \item If the initial condition satisfies
  \begin{equation}\label{eq:EPthreshold}\left(\pa_ru_0(r), \frac{u_0(r)}{r},
      -\frac{\pa_r\phi_0(r)}{r},\rho_0(r)\right)\in\Sigma,
  \end{equation}
   for all $r>0$, then the system admits a global smooth solution
   (in the sense of \eqref{eq:EPregularity}).
   Here, $\phi_0(\x):=(-\Delta)^{-1}(\rho_0(\x)-c)$, which is radially symmetric.
   \item If there exists an $r>0$ such that \eqref{eq:EPthreshold} is
     violated, then the solution blows up in finite time. Moreover,
     the blowup won't happen at $r=0$.
 \end{itemize}
\end{theorem}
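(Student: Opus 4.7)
The plan is to reduce the PDE problem to a closed ODE system along radial characteristics and then carry out a phase-space analysis in the spirit of the 1D Engelberg--Liu--Tadmor critical-threshold theory. First I would write the Euler--Poisson system in radial coordinates: the velocity equation becomes $u_t+uu_r=-\kappa\phi_r$, and the Poisson equation $-\frac{1}{r^{n-1}}(r^{n-1}\phi_r)_r=\rho-c$ integrates to $-r^{n-1}\phi_r=\int_0^r s^{n-1}(\rho-c)\,ds$. Introducing $e:=-\phi_r/r$, so that the third slot of \eqref{eq:EPthreshold} is exactly $e$, gives the pointwise identity $r\pa_r e=\rho-c-ne$, which expresses the spatially nonlocal piece of the force locally in $(e,\rho)$. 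Combining this with the dynamics of $p=u_r$ and $q=u/r$ yields, along the material derivative $'=\pa_t+u\pa_r$,
\begin{align}
p' &= -p^2+\kappa(\rho-c)-\kappa(n-1)e,\\
q' &= -q^2+\kappa e,\\
e' &= -q(c+ne),\\
\rho' &= -\rho\bigl(p+(n-1)q\bigr).
\end{align}
Along each characteristic $r'=u=rq$, the four-tuple $(p,q,e,\rho)$ evolves autonomously; the usually bad nonlocal contribution $\kappa r\pa_r e$ appearing in $p'$ has been traded for a purely local one via Poisson.

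Next I would analyze this ODE. The identity $(r^n e)'=-cr^{n-1}u$ produces the conserved quantity $r^n(e+c/n)$ along each characteristic, and with $\tilde e:=e+c/n$ the planar subsystem
\begin{align}
q' &= -q^2+\kappa\tilde e-\kappa c/n,\\
\tilde e' &= -nq\tilde e
\end{align}
decouples from $(p,\rho)$. For $c=0$ this is a Riccati-type dichotomy, and for $c>0$ it is a center-type picture with an explicit separatrix; in either case I would identify the curve in the $(q,\tilde e)$-plane dividing bounded from escaping trajectories. Once $(q,\tilde e)$ stays bounded, the $p$-equation is a Riccati equation with a bounded forcing $\kappa(\rho-c)-\kappa(n-1)e$, and the $\rho$-equation is linear in $\rho$, so the admissible $(p,\rho)$ ranges are captured by a secondary forced-Riccati threshold. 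The set $\Sigma$ of Definition \ref{def:EPsub} is then the precise region of initial four-tuples for which the full ODE stays bounded for all $t\ge 0$.

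The PDE-level conclusions follow by standard arguments. If \eqref{eq:EPthreshold} holds for all $r>0$, then every characteristic remains in $\Sigma$, $(p,q,e,\rho)$ is bounded uniformly in $(r,t)$, and Proposition \ref{prop:equivgradu} gives $\grad\u\in L^\infty$; elliptic regularity for $\phi$ and standard commutator/energy estimates then propagate the $H^s$ regularity of $(\rho-c,\u)$ globally. Conversely, if \eqref{eq:EPthreshold} is violated at some $r_0>0$, the characteristic starting at $r_0$ leaves $\Sigma$ and the Riccati mechanism drives $p$ or $q$ to $-\infty$ in finite time, forcing $\grad\u$ to blow up. To rule out blowup at the origin, I would use $u(0,t)=0$, which gives $q(0,t)=p(0,t)$, and the identity $r\pa_r e=\rho-c-ne$ evaluated in the limit $r\to 0$, which gives $e(0,t)=(\rho(0,t)-c)/n$. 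On this invariant slice the four-dimensional system collapses to $p'=-p^2+\kappa(\rho-c)/n$, $\rho'=-n\rho p$, a planar system with an explicit conserved/Lyapunov quantity (polynomial in $p^2$ and $\rho^{2/n}$) that keeps the origin trajectory bounded whenever the origin data lies in $\Sigma|_{r=0}$.

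The hard part is the second step: characterizing $\Sigma$ sharply in four variables. The $(p,\rho)$-equations are coupled to $(q,\tilde e)$ through the forcing in $p'$ and through $\rho'$, and one must verify that the natural ``each subsystem subcritical'' candidate region is genuinely invariant under the full nonlinear flow rather than merely a good heuristic. The conservation of $r^n\tilde e$ along characteristics, the explicit phase portrait of the $(q,\tilde e)$-subsystem, and the monotonicity of the $\rho$-equation in $p+(n-1)q$ are the tools I would lean on to make the construction of $\Sigma$ both invariant and sharp in the two regimes $c=0$ and $c>0$.
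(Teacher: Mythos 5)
Your reduction is exactly the paper's: the quantity you call $e=-\phi_r/r$ is the paper's $s$, your four-equation characteristic system is \eqref{eq:EPfull} verbatim, your $\Sigma$ is Definition \ref{def:EPsub}, and the subcritical direction (boundedness of $(p,q)$ $\Rightarrow$ boundedness of $\grad\u$ via Proposition \ref{prop:equivgradu} $\Rightarrow$ continuation via \eqref{eq:EPBKM}) is the paper's argument. Two points, however, are genuine gaps.

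First, your plan for the $(q,\tilde e)$ subsystem is to ``identify the curve dividing bounded from escaping trajectories.'' The paper's key structural fact (Theorems \ref{thm:EPb1} and \ref{thm:EPb2}) is that for $n\geq2$ there is no such dividing curve: \emph{every} trajectory with $\tilde e_0>0$ (automatic from $\rho\geq0$) stays bounded, including compressive data $q_0<0$ with $\tilde e_0$ arbitrarily small; this requires a nontrivial bootstrap ($s(t)\gtrsim s_0t^n(-q(t))^n$, with an extra logarithmic refinement at the critical dimension $n=2$), not a routine phase portrait. This fact is what the proof of the theorem actually leans on in the supercritical direction: since $(q,s)$ never blow up, the unbounded quantity must be $p$ or $\rho$ --- not ``$p$ or $q$'' as you write --- and you should also cover the case where $\rho$ alone becomes unbounded (then $\rho(\cdot,T)\notin H^s$ even though $\grad\u$ is fine).

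Second, your treatment of ``blowup won't happen at $r=0$'' answers a different question. The claim is that a characteristic starting at $r_0>0$ cannot reach the origin in finite time (no concentration at the origin), and the proof is the one-line consequence of your own observation $r'=rq$: uniform boundedness of $q$ gives $r(t)\geq r_0\exp\bigl(-\int_0^t\|q(\cdot,\tau)\|_{L^\infty}\,d\tau\bigr)>0$. Your restriction of the dynamics to the invariant slice $r=0$ (using $q(0,t)=p(0,t)$ and $e(0,t)=(\rho(0,t)-c)/n$) only controls the solution \emph{at} the origin and does not rule out characteristics collapsing onto it; note also that this again requires the uniform bound on $q$, which your proposal has not established.
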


\begin{remark}\label{rem:EPsharp}
  The global regularity for multi-dimensional Euler-Poisson equations
  in a challenging problem, even under the radially symmetric setup.
  When pressure is presented and with a non-zero background, global
  solutions are shown in \cite{wang2001global} with the help of additional relaxation.
  In \cite{jang2012two}, global regularity is shown in 2D for small
  initial data, featuring an algebraic decay towards the constant
  steady state. Under the pressure-less setup, to our best knowledge,
  the only regularity result is in \cite{wei2012critical}, where
  a critical threshold condition is shown, only for the zero
  background case $(c=0)$, and with expanding flows $u_0(r)>0$.

  Our result works for both zero and constant background cases. It
  is the very first result that provides a sharp characterization on
  all initial conditions,
  which lead to either global wellposedness or finite time blowup.
  In particular, it covers initial data that is not fully expanding.
  One remarkable and non-trivial discovery is, for any initial data with compression
  ($u_0(r)<0$ so the velocity points to the origin), the Poisson force
  helps to avoid blowup at the origin, so that there won't be
  concentrations at the origin.

  For the zero background case, we derive a more explicit expression of
  the subcritical region $\Sigma$. Like the 1D result in
  \cite{engelberg2001critical}, global regularity can be obtained as
  long as $\pa_ru_0$ is not too negative (see Theorem \ref{thm:EP-prho-sub}).
\end{remark}

Our next result is on the Euler-alignment equations
\eqref{eq:density}-\eqref{eq:momentum} and \eqref{eq:EAforce},
with a bounded Lipschitz influence function $\phi$.

\begin{theorem}[Threshold conditions for the Euler-alignment
  equations]\label{thm:EA}
  Consider the Euler-alignment equation
  \eqref{eq:density}-\eqref{eq:momentum} and \eqref{eq:EAforce}  with
  smooth compact initial data
  $\rho_0\in H_c^s(\R^n)$ and $\u_0\in H^{s+1}(\R^n)^n$, for
  $s>\frac{n}{2}$, and satisfying the radial symmetry
  \eqref{eq:radial}. Denote
  \[G_0(|\x|)=\pa_ru_0(|\x|)+\int_{\R^n}\phi(|\x-\y|)\rho_0(\y)\,d\y,\]
  which is a radially symmetric function. Also, set a constant
  $C_0>0$ that depends on initial data as
  $C_0:=\|\phi'\|_{L^\infty}\|\rho_0\|_{L^1}\|u_0\|_{L^\infty}$.
  Then,
  \begin{itemize}
    \item There exist subcritical thresholds $\sigma_G^+$ and
      $\sigma_q^+$, defined in \eqref{eq:sqp} and \eqref{eq:sGp}
      respectively, such that if the initial data satisfy
      \[G_0(r)\geq\sigma_G^+(C_0)\quad\text{and}\quad
        \frac{u_0(r)}{r}\geq\sigma_q^+(C_0),\quad\forall~r>0,\]
      then, the system admits a global smooth solution. Moreover, the
      solution exhibits the flocking phenomenon \eqref{eq:flocking}
      with fast alignment \eqref{eq:fastalign}.
    \item There exist supercritical thresholds $\sigma_G^-$ and
      $\sigma_q^-$, defined in \eqref{eq:sqm} and \eqref{eq:sGm}
      respectively, such that if there exists an $r>0$ where
      \[G_0(r)<\sigma_G^-(C_0)\quad\text{or}\quad
        \frac{u_0(r)}{r}<\sigma_q^-(C_0),\]
      then the solution blows up in finite time.
    \end{itemize}
  \end{theorem}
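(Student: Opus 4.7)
The plan is to push the paired-quantity strategy developed above through the commutator algebra of the alignment force, in the spirit of the 1D theory \cite{carrillo2016critical}. Writing $\Psi=\phi\star\rho$ and denoting by $\,'=\pa_t+\u\cdot\grad$ the material derivative, a direct radial computation from \eqref{eq:density}--\eqref{eq:momentum} and \eqref{eq:EAforce} gives
\begin{equation}
p'=-p^2+F_r,\qquad q'=-q^2+F/r,\qquad \rho'=-\rho\bigl(p+(n-1)q\bigr),
\end{equation}
where $\F(\x,t)=(\x/r)F(r,t)$. The key algebraic step is to compute the two combinations $F_r+\Psi'$ and $F/r+q\Psi$, using the decomposition $\F=\phi\star(\rho\u)-\u\Psi$ and integration by parts in $\y$. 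In one dimension these reduce exactly to $-p\Psi$ and $0$, recovering the classical commutator. In the radial multi-dimensional setting, extra tangential cross terms survive; crucially, these involve only $\phi'$, $\u$ and $\rho$ (and not $\grad\u$), so they are a priori controllable by $C_0$ once we use conservation of mass and the pointwise maximum principle $\|\u(\cdot,t)\|_{L^\infty}\leq\|\u_0\|_{L^\infty}$. The upshot is a perturbed Ricatti pair
\begin{equation}
G'=-pG+\mathcal{E}_p,\qquad q'=-q(q+\Psi)+\mathcal{E}_q,
\end{equation}
with $G:=p+\Psi$ and $|\mathcal{E}_p|,|\mathcal{E}_q|\lesssim C_0$.

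With this in hand, the proof proceeds as follows. Local well-posedness in $H^{s+1}\times H^s$ follows from standard symmetric-hyperbolic theory, and by Proposition~\ref{prop:equivgradu} global regularity reduces to a uniform-in-time bound on $(p,q)$ along characteristics $r\mapsto r(t)$. I would then do a phase-plane analysis on the Ricatti system. Since $0\leq\Psi\leq\|\phi\|_{L^\infty}\|\rho_0\|_{L^1}$ is uniformly bounded, the $G$-equation admits an invariant subcritical region $\{G\geq\sigma_G^+(C_0)\}$ and a supercritical blowup region $\{G<\sigma_G^-(C_0)\}$ separated by the equilibria of the perturbed Ricatti flow. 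A parallel analysis of the $q$-equation, using $\Psi\geq0$ and completing the square, yields the thresholds $\sigma_q^\pm(C_0)$. If both subcritical conditions hold for every $r>0$, then $G$ and $q$ stay bounded for all time, hence $p=G-\Psi$ is bounded, $\grad\u\in L^\infty$ by Proposition~\ref{prop:equivgradu}, and the solution is global. Conversely, if either condition is violated at some $r_*>0$, the Ricatti comparison along the characteristic emanating from $r_*$ forces the corresponding scalar to $-\infty$ in finite time.

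For the flocking and fast-alignment statement \eqref{eq:flocking}--\eqref{eq:fastalign}, the uniform bound on $G$ together with $\rho'=-\rho(p+(n-1)q)$ propagates a bounded density and a bounded density support along the flow. Inserting this control back into \eqref{eq:EAforce} lets one adapt the standard Lyapunov/maximum-principle argument \`a la Ha--Liu, evaluated in the one-dimensional radial variable, to obtain exponential decay of the velocity diameter and thus fast alignment.

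The main obstacle will be the error estimate that yields the clean constant $C_0=\|\phi'\|_{L^\infty}\|\rho_0\|_{L^1}\|u_0\|_{L^\infty}$. A crude Lipschitz bound on $\grad\F$ would involve $\|\grad\u\|_{L^\infty}$ and create a circular dependence on the unknown. The correct estimate comes from evaluating the surviving cross terms at $\x=r\e_1$, moving the derivative from $\phi$ onto $\u$ via integration by parts in $\y$, and then exploiting the geometric identity $|\y|^2-y_1^2=|\y_\perp|^2\leq|\y-r\e_1|\,|\y|$ to tame the singular factor $1/|\y-r\e_1|$. This is exactly what closes the Ricatti system with no implicit dependence on $\grad\u$, and is what promotes $C_0$ to the natural parameter controlling the thresholds.
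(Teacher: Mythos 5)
Your overall strategy---the commutator quantity $G=p+\Psi$, the closed radial Ricatti pair for $(G,q)$ with a nonlocal error, and the reduction of global regularity to uniform bounds on $(p,q)$ via Proposition~\ref{prop:equivgradu}---is the same as the paper's. But there is a genuine gap in how you propose to close the phase-plane argument. You bound the error terms only by a constant, $|\mathcal{E}_p|,|\mathcal{E}_q|\lesssim C_0$, and then look for invariant regions of the resulting autonomous perturbed Ricatti flow. That argument only produces the \emph{rough} thresholds (Propositions~\ref{prop:EA-q-rough} and \ref{prop:EA-G-rough} in the paper), and those require the smallness conditions $C_0\leq \psim^2/4$ and $C_0\leq\psim^2/(4(n-1))$ for the subcritical sets to be nonempty. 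The thresholds $\sigma_q^\pm$, $\sigma_G^\pm$ that the theorem actually asserts, defined by the ODEs \eqref{eq:sqp}--\eqref{eq:sGm}, exist for \emph{all} $C_0>0$ and are obtained only by exploiting the fact that the error decays exponentially, $|\zeta(r,t)|/r\leq C_0e^{-\nu t}$, and then running a two-dimensional phase-plane/comparison analysis for the non-autonomous system $(q,B)$ with $B'=-\nu B$, as in \cite[Theorem~5.1]{tadmor2014critical}. Your proposal never uses this decay in the threshold construction; indeed you invoke only the maximum principle $\|\u(\cdot,t)\|_{L^\infty}\leq\|\u_0\|_{L^\infty}$, which gives the time-uniform bound but not the decay.

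Related to this, you treat flocking and fast alignment purely as consequences of global regularity, whereas the paper uses them as a priori inputs on any interval of existence of the strong solution: the uniform support bound $D$ gives the strictly positive lower bound $\psi\geq\nu=\phi(2D)\|\rho_0\|_{L^1}>0$ (your ``$\Psi\geq0$'' is not enough---without strict positivity the damping in $q'=-q^2-q\psi+\zeta/r$ can degenerate and the subcritical region for $q_0<0$ collapses), and the fast-alignment decay of $\|u(\cdot,t)\|_{L^\infty}$ is exactly what produces the factor $e^{-\nu t}$ in the bound on $\zeta/r$. Finally, your proposed mechanism for estimating $\zeta/r$ (integration by parts in $\y$ plus a geometric inequality to tame $1/|\y-r\e_1|$) is more complicated than needed and its feasibility is unclear; the paper simply subtracts $\phi(|\z|)$ using odd symmetry in $z_1$ and applies the Lipschitz bound $|\phi(|r\e_1-\z|)-\phi(|\z|)|\leq\|\phi'\|_{L^\infty}r$, which yields $|\zeta|\leq r\,\|\phi'\|_{L^\infty}\|\rho_0\|_{L^1}\|u(\cdot,t)\|_{L^\infty}$ directly, with no singular factors to control.
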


  \begin{remark}\label{rem:EAimprove}
    To our best knowledge, this is the first result that provides a
    large subcritical region of initial data that leads to global
    regularity, for the Euler-alignment equations in three (or more)
    dimensions.
    It also provides an enhanced subcritical region in 2D, compared
    with the existing results \cite{tadmor2014critical, he2017global}.

    The thresholds $\sigma_G^\pm$ depend on the dimension $n$. As
    illustrated in Figure \ref{fig:EA-Gn}, in 1D,
    $\sigma_G^+=\sigma_G^-\equiv0$. This recovers the sharp threshold
    condition in \cite{carrillo2016critical}.
    The thresholds $\sigma_q^\pm$ are independent of $n$. As
    illustrated in Figure \ref{fig:EA-q},
    $\sigma_q^+$ is negative when $C_0$ is small. Therefore, the
    subcritical region includes initial data where the flow has compression.

    In the special case when $\phi$ is a constant (say $\phi\equiv1$),
    the Euler-alignment
    equations can be reduced to the damped Burgers equation
    \eqref{eq:dampedBurgers}, with
    $\kappa=\|\rho_0\|_{L^1}$.
    Our threshold conditions become the sharp condition in
    \eqref{eq:CTdampedBurgers}. Indeed, we have $C_0=0$. One can
    further check that $\sigma_G^+(0)=\sigma_G^-(0)=0$ and
    $\sigma_q^+(0)=\sigma_q^-(0)=-\|\rho_0\|_{L^1}$.
  \end{remark}

\section{Application to the Euler-Poisson equations}\label{sec:EP}
In this section, we discuss the pressure-less Euler-Poisson equation
\begin{align*}
  &\pa_t\rho+\div(\rho\u)=0,\quad x\in\R^n,~~t\geq0,\\
  &\pa_t\u+(\u\cdot\grad)\u=-\kappa\grad\phi,\quad-\Delta\phi=\rho-c.
\end{align*}
Here, $\rho$ is the density and $\u$ is the velocity field. $\phi$ is
the electrical charge potential. $c\geq0$ is a constant background.
The parameter $\kappa$ characterizes the strength of the charge
force. We shall focus on the more intriguing case when the force is
repulsive, namely $\kappa>0$.

Let us first state the well-known local wellposedness theory.
\begin{theorem}[Local wellposedness]\label{thm:EPlocal}
  Consider the Euler-Poisson equations with initial data
  $\rho_0-c\in H^s(\R^n)$ and
  $\u_0\in H^{s+1}(\R^n) ^n$, for $s>\frac{n}{2}$.
  Then, there exists a time $T>0$ such that
  the solution
  \begin{equation}\label{eq:EPregularity}
    (\rho,\u)\in C([0,T],  H^{s}(\R^n))\times
    C([0,T], H^{s+1}(\R^n))^n.
  \end{equation}
  Moreover, the life span $T$ can be extended as long as
  \begin{equation}\label{eq:EPBKM}
    \int_0^T\|\grad\u(\cdot,t)\|_{L^\infty}\,dt<+\infty.
  \end{equation}
\end{theorem}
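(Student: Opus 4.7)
The plan is to follow the standard quasilinear hyperbolic framework, adapted to the hyperbolic-elliptic coupling of the Euler-Poisson system. The key structural observation is that if $\rho-c\in H^s$, then $\phi=(-\Delta)^{-1}(\rho-c)$ gains two derivatives, so $\grad\phi\in H^{s+1}$; this is exactly why the estimate closes at the asymmetric regularity level $(\rho,\u)\in H^s\times H^{s+1}$, and it is the main reason one cannot simply quote off-the-shelf compressible-Euler LWP results.

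First I would construct a sequence of approximate solutions by an iteration scheme: given $\u^{(k)}$, solve the linear transport equation for $\rho^{(k+1)}$, define $\phi^{(k+1)}=(-\Delta)^{-1}(\rho^{(k+1)}-c)$, then solve the linear equation $\pa_t\u^{(k+1)}+\u^{(k)}\cdot\grad\u^{(k+1)}=-\kappa\grad\phi^{(k+1)}$. Alternatively one may use a Friedrichs mollification $J_\eps$, replacing $\u\cdot\grad$ by $J_\eps(\u\cdot\grad J_\eps\,\cdot)$ to obtain an ODE in $H^s\times H^{s+1}$ that is solved by Picard, then pass $\eps\to0$. Either route reduces the problem to a uniform a~priori energy estimate plus a stability (contraction) estimate at the $L^2$ level.

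The heart of the argument is the energy estimate. For a multi-index $\alpha$ with $|\alpha|\le s$, apply $\pa^\alpha$ to \eqref{eq:density} and pair with $\pa^\alpha(\rho-c)$; for $|\alpha|\le s+1$, apply $\pa^\alpha$ to the velocity equation and pair with $\pa^\alpha\u$. The top-order transport terms produce commutators $[\pa^\alpha,\u\cdot\grad]\rho$ and $[\pa^\alpha,\u\cdot\grad]\u$, which are handled by Kato--Ponce and Moser-type estimates; together with Sobolev embedding $H^s\hookrightarrow L^\infty$ for $s>n/2$, they are bounded by $C(1+\|\grad\u\|_{L^\infty})\mathcal{E}(t)$, where $\mathcal{E}(t):=\|\rho-c\|_{H^s}^2+\|\u\|_{H^{s+1}}^2$. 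The coupling term $-\kappa\langle\pa^\alpha\grad\phi,\pa^\alpha\u\rangle$ is controlled by $\|\rho-c\|_{H^s}\|\u\|_{H^{s+1}}$ thanks to the Poisson gain, and the nonlinear term $\rho\,\div\u$ in the density equation is closed by a Moser product estimate. Assembling everything yields
\begin{equation*}
\tfrac{d}{dt}\mathcal{E}(t)\le C\bigl(1+\|\grad\u\|_{L^\infty}\bigr)\mathcal{E}(t)+C\mathcal{E}(t)^{3/2},
\end{equation*}
and Gronwall provides a local existence time $T=T(\mathcal{E}(0))>0$. Uniqueness is obtained by a parallel $L^2$-energy estimate on the difference of two solutions.

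The continuation criterion \eqref{eq:EPBKM} falls out of the same inequality: under the assumption $\int_0^T\|\grad\u\|_{L^\infty}dt<\infty$, Gronwall bounds $\mathcal{E}(t)$ on $[0,T]$, so the local theory can be reapplied at $t=T$ to extend the solution. The main technical obstacle is the bookkeeping forced by the asymmetric regularity: the velocity sits one derivative higher than the density, so at order $|\alpha|=s+1$ in the velocity equation, the commutator $[\pa^\alpha,\u\cdot\grad]\u$ must be arranged so that the $s+1$ derivatives never all land on the rougher factor, and the Poisson term $\grad\phi$ must be shown to deliver exactly $\|\rho-c\|_{H^s}$ at top order and no more. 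Getting these matchings right is what makes $(H^s,H^{s+1})$ the natural and minimal functional setting for the Euler-Poisson system.
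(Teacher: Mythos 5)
The paper gives no proof of Theorem \ref{thm:EPlocal}; it is stated as the classical local wellposedness theory and used as a black box, so there is nothing to compare your argument against step by step. Your outline is the standard route (iteration or Friedrichs mollification, Kato--Ponce commutator and Moser estimates at the asymmetric level $H^s\times H^{s+1}$, Gronwall, $L^2$ contraction for uniqueness, and the continuation criterion read off from the closed energy inequality), and the overall architecture is sound.

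One step, however, is asserted rather than proved, and as stated it is false in general: the claim that $\rho-c\in H^s(\R^n)$ implies $\grad\phi=\grad(-\Delta)^{-1}(\rho-c)\in H^{s+1}(\R^n)$. The symbol of $\grad(-\Delta)^{-1}$ is $-i\xi/|\xi|^{2}$, harmless for $|\xi|\geq1$ but singular like $|\xi|^{-1}$ at the origin; for a generic $f\in H^{s}$ with $\hat f$ nonvanishing near $\xi=0$, already the $L^2$ norm of $\grad(-\Delta)^{-1}f$ diverges when $n\leq2$, and for $n\geq3$ one needs extra information (e.g.\ $f\in L^1$ or $f\in\dot H^{-1}$) to control the low frequencies. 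So the ``Poisson gain of two derivatives'' on which your whole bookkeeping rests does not come for free on the whole space. The standard fixes are either (i) to take the electric field $E=\grad\phi$ as an independent unknown with $E_0\in H^{s+1}$ prescribed as data, using $\pa_tE=\kappa\,\grad(-\Delta)^{-1}\div(\rho\u)$, where $\grad(-\Delta)^{-1}\div$ is a zeroth-order Riesz-type operator bounded on $H^{s}$, so that only the high-frequency part of $E$ is estimated through $\rho-c$; or (ii) to impose a neutrality/decay hypothesis guaranteeing $\rho-c\in\dot H^{-1}$. In the radially symmetric setting of this paper the relevant quantity is $\phi_r=-r^{1-n}\int_0^r\tau^{n-1}(\rho(\tau)-c)\,d\tau$, for which the needed bounds can be checked directly, but your general-data energy argument must incorporate one of the fixes above before the function-space bookkeeping literally closes. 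The remainder of your sketch (arranging the top-order commutators so that $s+1$ derivatives never all fall on $\rho$, and the BKM-type continuation) is correct.
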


Under the radial symmetry \eqref{eq:radial}, the system can be
expressed as
\begin{align}
  &\rho_t+(\rho u)_r=-\frac{(n-1)\rho u}{r},\label{eq:EPrho}\\
  &u_t+uu_r=-\kappa\phi_r,\label{eq:EPu}\\
  &-\phi_{rr}-(n-1)\frac{\phi_r}{r}=\rho-c.\label{eq:EPphi}
\end{align}

Let us compute the dynamics of the pair \eqref{eq:pair}: $p=u_r$,
$q=\frac{u}{r}$, together with the dynamics of $\rho$ along each
characteristic path
\[
  \begin{cases}
    p'=-p^2-\kappa \phi_{rr}=-p^2+\kappa\left(\rho-c+(n-1)\frac{\phi_r}{r}\right),\\
    q'=-q^2-\kappa \frac{\phi_r}{r},\\
    \rho'=-\rho (p+(n-1)q),
  \end{cases}
\]
where the relation \eqref{eq:EPphi} is used in the second equality of
the dynamics of $p$.

Observe that the dynamics is not a closed system, but with only one
nonlocal term $\frac{\phi_r}{r}$. One way to get rid of the nonlocal
contribution is to seek for cancelations. Indeed, the term goes away
if we evolve the divergence $d$
\[d'=p'+(n-1)q'=(-p^2-(n-1)q^2)+\kappa(\rho-c).\]
This reflects the fact that the divergence is friendly to the forcing
term.
However, one has to bear with the effect of the spectral gap, which is
difficult to control.

Instead, we directly work with the $(p,q,\rho)$ dynamics. Let
\[s:=-\frac{\phi_r}{r}\]
be the extra quantity involved.  To get the dynamics of $s$ along the
characteristic path, we
rewrite \eqref{eq:EPphi} as
\[(-r^{n-1}\phi_r)_r=r^{n-1}(\rho-c).\]
From \eqref{eq:EPrho}, the right hand side $r^{n-1}(\rho-c)$ satisfies
\[
  \pa_t\left(r^{n-1}(\rho-c)\right)+\pa_r\left(r^{n-1}(\rho-c)u\right)
  =-\pa_r\left(cr^{n-1}u\right).
\]
Then, its primitive $e:=-r^{n-1}\phi_r$ would satisfy
\[e_t+ue_r=-cr^{n-1}u.\]
As $s=er^{-n}$, we have
\[s'=e'r^{-n}-nr^{-n-1}r'e=-c\frac{u}{r}-nu\frac{s}{r}=-(c+ns)q.\]

Since the density $\rho\geq0$, we get
\[s=r^{-n}\int_0^r\left(\tau^{n-1}(\rho(\tau)-c)\right)\,d\tau
  \geq r^{-n}\left(-\frac{cr^n}{n}\right)=-\frac{c}{n}.\]
The strict inequality can be achieved for $r>0$ if we assume
$\rho_0(0)>0$.

Thus, we end up with a closed system of $(p,q,s,\rho)$ along each
characteristic path.
\begin{equation}\label{eq:EPfull}
  \begin{cases}
    p'=-p^2+\kappa(\rho-c-(n-1)s),\\
    q'=-q^2+\kappa s,\\
    s'=-(ns+c)q,\\
    \rho'=-\rho(p+(n-1)q).
  \end{cases}
\end{equation}
The global solvability of the PDE system reduces to the decoupled ODE
systems along characteristic paths.

\begin{definition}[Subcritical region]\label{def:EPsub}
Let $\Sigma\in\R^4$ be the set defined as follows:
\[(p_0, q_0, s_0, \rho_0)\in\Sigma\]
if and only if
\begin{itemize}
 \item[(i).] $\rho_0\geq0$ and $s_0>-\frac{c}{n}$.
 \item[(ii).]  the ODE system \eqref{eq:EPfull} with initial condition 
$(p_0, q_0, s_0, \rho_0)$ is bounded globally in time.
\end{itemize}
\end{definition}

Now, we are ready to prove Theorem \ref{thm:EP}.

\begin{proof}[Proof of Theorem \ref{thm:EP}]
First, for subcritical initial data, from the Definition~\ref{def:EPsub},
we know $\pa_ru(r,t)$ and $\frac{u(r,t)}{r}$ are bounded globally in
time. Then, Proposition~\ref{prop:equivgradu} implies boundedness of
$\grad\u$. Finally, condition \eqref{eq:EPBKM} holds for any finite
time $T$, leading to global regularity.

Next, for supercritical initial data, at least one quantity out of
$(p, q, s, \rho)$ should blow up in finite time.
We will show later that $(q,s)$ stays bounded in all time. So, the
blowup can only happen to $p$ or $\rho$.
If $p$ blows up at time $T$, $\grad\u(\cdot,T)$ becomes unbounded,
and consequently $\u(\cdot,T)\not\in (H^{s+1}(\R^n))^n$ for any
$s>n/2$.
If $\rho$ blows up at time $T$, $\rho(\cdot,T)\not\in H^s(\R^n)$.
Therefore, the solution loses regularity \eqref{eq:EPregularity} in finite time.

Finally, we show that blowup won't happen at the origin.
Note that such blowup happens when a characteristic path $r(t)$
starting at $r_0>0$ reaches zero at a finite time. However, we have
\[\frac{d}{dt} r(t)= u(r(t),t) = r(t) q(r(t),t).\]
As $q$ is uniformly bounded in time (we will show this later), we
obtain
\[r(t)\geq r_0e^{-\int_0^t\|q(\cdot,\tau)\|_{L^\infty}d\tau}>0.\]
Hence, blowup can not happen at the origin.
\end{proof}

The rest of the section is devoted to showing that $(q,s)$ are uniformly
bounded, and to providing more explicit
descriptions of the set $\Sigma$.

\subsection{The one-dimensional case}
When $n=1$, the quantities $(q,s)$ do not contributed towards the
dynamics of $(p,\rho)$. The ODE system \eqref{eq:EPfull} reduces to
\[
  \begin{cases}
    p'=-p^2+\kappa(\rho-c),\\
    \rho'=-\rho p,
  \end{cases}
\]
which has been studied in \cite{engelberg2001critical}.
$\Sigma$ can be explicitly expressed by
\begin{equation}\label{eq:Sigma1D}
  \Sigma=
\begin{cases}~~\left\{(p_0,\rho_0)\left|~p_0>-\sqrt{2\kappa\rho_0}\right.\right\}&c=0,\\
&{\tiny~~}\\
~~\left\{(p_0,\rho_0)\left|~|p_0|<\sqrt{\kappa(2\rho_0-c)}\right.\right\}&c>0.
\end{cases}
\end{equation}

\subsection{Multi-dimensional cases with zero background}
For dimensions $n\geq2$, the dynamics of $(p,\rho)$ depends on $(q,s)$.
The coupled quantities serve as the characterization of the spectral
gap effect, which appears only in multi-dimensions.
Since the behaviors of the dynamics are different between $c=0$ and
$c>0$, we shall first discuss the zero
background case.

\subsubsection{Uniform boundedness of $(q,s)$}
We now study the dynamics of $(q,s)$, which form a closed system,
independent of $(p,\rho)$
\begin{equation}\label{eq:qs}
  \begin{cases}
    q'=-q^2+\kappa s,\\
    s'=-nsq.
  \end{cases}
\end{equation}
The main result is summarized as follows.

\begin{theorem}\label{thm:EP_qsc0}
  Let $n\geq2$. Consider the $(q,s)$ dynamics in \eqref{eq:qs} with bounded
  initial conditions $(q_0,s_0)$ such that $s_0>0$.
  Then, $(q(t), s(t))$ remains bounded in all time.
  Moreover, $(q(t),s(t))$ converges to $(0,0)$ as $t\to\infty$.
\end{theorem}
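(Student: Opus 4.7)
The plan is to treat \eqref{eq:qs} as a planar autonomous ODE and to exhibit an invariant curve that pins down each trajectory in the $(q,s)$-plane, so that both global boundedness and the limit $(q,s)\to(0,0)$ reduce to algebraic facts about this curve.

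First, the sign of $s$ is preserved: since $s'=-nsq$, we have $s(t)=s_0\exp\bigl(-n\int_0^t q(\tau)\,d\tau\bigr)>0$ throughout the lifespan of the solution, so the trajectory stays in the upper half plane $\{s>0\}$. Next, I would uncover the invariant by formally eliminating $t$: $\frac{dq}{ds}=\frac{q}{ns}-\frac{\kappa}{nq}$, which linearizes under the substitution $w=q^2$ to
\[ \frac{dw}{ds}-\frac{2}{ns}\,w=-\frac{2\kappa}{n}. \]
For $n\geq 3$, integration with factor $s^{-2/n}$ yields $q^2=C s^{2/n}-\frac{2\kappa}{n-2}\,s$ along trajectories, with $C$ fixed by $(q_0,s_0)$; equivalently, $E(q,s):=q^2+\frac{2\kappa}{n-2}\,s-Cs^{2/n}$ can be checked directly to satisfy $E'=-2qE$, so $E(q_0,s_0)=0$ forces $E\equiv 0$. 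Imposing $q^2\geq 0$ on this relation confines $s$ to a compact interval $[0,s_{\max}]$ with $s_{\max}$ explicit in the initial data, which in turn gives $q^2\leq C s_{\max}^{2/n}$, hence global boundedness.

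For the convergence, the only equilibrium in $\{s\geq 0\}$ is the origin. On the invariant curve, $s$ is strictly monotone on each of the branches $\{q>0\}$ and $\{q<0\}$, and the trajectory can switch branches only at the turning point $(0,s_{\max})$, where $q'=\kappa s_{\max}>0$ drives it from the lower onto the upper branch. Thereafter $s$ strictly decreases and is bounded below by $0$, so $s(t)\to s_\infty$; the invariant then forces $q(t)$ to converge, and the limit must be an equilibrium on the curve, leaving only $(0,0)$. The main technical point I anticipate is the borderline case $n=2$, where the coefficient $\frac{2\kappa}{n-2}$ is singular; the invariant curve then involves a logarithm, $q^2=s(C-\kappa\ln s)$, and the bound on $s_{\max}$ has to be derived separately, but the qualitative picture — a compact loop through the origin with a single turning point on the $s$-axis — and the boundedness/convergence argument carry over.
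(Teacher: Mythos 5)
Your proof is correct, but it takes a genuinely different route from the paper. You exhibit an explicit first integral of \eqref{eq:qs} --- equivalently, the conserved quantity $q^2s^{-2/n}+\tfrac{2\kappa}{n-2}s^{1-2/n}$ for $n\geq3$ and $q^2/s+\kappa\ln s$ for $n=2$ --- which pins each orbit to the algebraic curve $q^2=Cs^{2/n}-\tfrac{2\kappa}{n-2}s$ (resp.\ $q^2=s(C-\kappa\ln s)$); boundedness and convergence to $(0,0)$ then follow from the shape of this curve together with the standard fact that a convergent trajectory of a planar autonomous system must limit to an equilibrium. The paper instead argues by comparison and bootstrap: for $q_0\geq0$ it shows invariance of $\{q\geq0\}$ and monotonicity of $s$ (Lemma \ref{lem:qsplus}); for $q_0<0$ it integrates $q'\geq-q^2$ backward to obtain the key lower bound $s(t)\geq s_0t^n(-q(t))^n$ in \eqref{eq:sest}, which makes $q'$ positive for large $-q$ when $n>2$, and requires an extra iteration at the critical dimension $n=2$. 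Your argument is cleaner, treats $q_0\geq0$ and $q_0<0$ uniformly, and explains why $n=2$ is borderline (the coefficient $\tfrac{2\kappa}{n-2}$ degenerates into a logarithm) without any bootstrapping; the paper's hands-on estimates, on the other hand, are of a type that is reused later (e.g.\ in the decay rates of Lemma \ref{lem:qsasymp} and in the $c>0$ case). One detail you should make explicit: a trajectory with $q_0<0$ cannot stay on the lower branch $\{q<0\}$ for all time, since then $s$ would increase to a limit $s_\infty\geq s_0>0$ while your equilibrium argument forces the limit point to be $(0,0)$ --- a contradiction --- so the turning point $(0,s_{\max})$ is reached in finite time; this is a one-line addition entirely within your framework.
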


Theorem \ref{thm:EP_qsc0} ensures uniform boundedness of $(q,s)$.
The proof involves non-trivial analysis on
the phase plane of $(q,s)$. 

First, express $s$ in terms of $q$ along the characteristic path as
\begin{equation}\label{eq:st}
  s(t)=s_0\exp\left[-n\int_0^tq(\tau)d\tau\right].
\end{equation}
Clearly, $s$ remains bounded and positive as long as $q$ is bounded.

We start with the relatively easy case when $q_0\geq0$ for every
characteristic path. This
corresponds to expanding waves, as $u_0(r)\geq0$ for all $r\geq0$.
So one does not need to worry about concentration at the origin.

This particular setup has been investigated in
\cite{wei2012critical}, by studying the explicit dynamics of the
characteristic trajectories in time.
The following lemma shows that $q_0\geq0$ is
an invariant region in the phase plane of $(q,s)$.
As illustrated in the curve starting at $A$ in
Figure~\ref{fig:EP_sq_c0}, the trajectory of $(q,s)$ in the phase
plane stays bounded, and is attracted to the steady state $(0,0)$.

\begin{figure}[ht]
  \includegraphics[width=.7\linewidth]{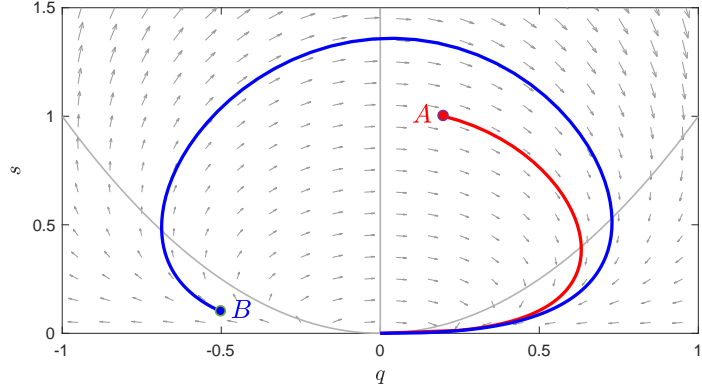}
  \caption{Illustration of the the phase plane of $(q,s)$ with $c=0$ and $N=2$.
  If the initial data $(q_0,s_0)=A$, the trajectory stays in
  $\R_+\times\R_+$, and converges to $(0,0)$. If the initial data
  $(q_0,s_0)=B$, the trajectory stays bounded, and will cross $q=0$ in
  finite time. Asymptotically, it also converges to $(0,0)$.}\label{fig:EP_sq_c0}
\end{figure}

\begin{lemma}\label{lem:qsplus}
Consider the dynamics \eqref{eq:qs} with $q_0\geq0$ and $s_0>0$, 
then $(q,s)$ remains bounded in all time. More precisely, there exists a
constant $Q$  such that
\[q(t)\in[0,Q], \quad s(t)\in(0,s_0],\quad \forall~t\geq0.\]
Moreover, $(q(t),s(t))$ converges to $(0,0)$ as $t\to\infty$.
\end{lemma}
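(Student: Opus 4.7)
My plan is to first establish the invariance of the region $\{q\geq 0,\,s>0\}$ and the claimed uniform bounds on $(q,s)$, and then to analyze the long-time asymptotics by combining the monotonicity of $s$ with a Barbalat-type argument on $q$.

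For the invariance I would use the explicit formula \eqref{eq:st} to see that $s(t)>0$ on any interval on which $q$ is bounded. For $q$, the vector field points strictly into $\{q\geq 0\}$ along the boundary, since $q'=\kappa s>0$ whenever $q=0$; a standard barrier argument (if $q$ were to cross zero first at some time $t_0$, continuity would give $q'(t_0)\leq 0$, while in fact $q'(t_0)=\kappa s(t_0)>0$) then prevents $q$ from ever becoming negative. With $q\geq 0$ in hand, the identity $s'=-nsq\leq 0$ shows that $s$ is monotone non-increasing, so $s(t)\in(0,s_0]$. To bound $q$ from above, I would apply a comparison/maximum-principle argument to $q'\leq -q^2+\kappa s_0$: whenever $q>\sqrt{\kappa s_0}$ one has $q'<0$, so $q(t)\leq Q:=\max(q_0,\sqrt{\kappa s_0})$ for all time.

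For the long-time behavior, the monotonicity and lower bound on $s$ give a limit $s(t)\to s_\infty\geq 0$. The crucial step is ruling out $s_\infty>0$. If it held, \eqref{eq:st} would yield $\int_0^\infty q(\tau)\,d\tau=\tfrac{1}{n}\ln(s_0/s_\infty)<\infty$. Since $q$ is bounded by $Q$ and $q'=-q^2+\kappa s$ is bounded accordingly, $q$ is Lipschitz, hence uniformly continuous, so Barbalat's lemma forces $q(t)\to 0$. But then $q'(t)\to\kappa s_\infty>0$, which would make $q$ grow at least linearly for large $t$, contradicting $q\to 0$. Hence $s_\infty=0$. The convergence $q(t)\to 0$ then follows by an envelope argument: for any $\epsilon>0$, pick $T_\epsilon$ with $s(t)<\epsilon$ for $t\geq T_\epsilon$; the inequality $q'\leq -q^2+\kappa\epsilon$ yields $\limsup_{t\to\infty}q(t)\leq\sqrt{\kappa\epsilon}$, and letting $\epsilon\to 0$ together with $q\geq 0$ gives $q(t)\to 0$.

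The main obstacle I anticipate is the convergence step. The equilibrium $(0,0)$ is degenerate: its linearization $q'=\kappa s$, $s'=0$ has a zero eigenvalue with a center direction along the $s$-axis, so standard local stability theory offers no shortcut. The argument must exploit the global structure of the system, namely the monotonicity of $s$ furnished by $s'=-nsq$ together with the Ricatti-like form of the $q$-equation, and the Barbalat step is the delicate point since it is what converts the integrability of $q$ into the pointwise decay needed to force the contradiction ruling out $s_\infty>0$.
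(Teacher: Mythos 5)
Your proof is correct, and the first half (invariance of $\{q\geq 0\}$ via the barrier argument at $q=0$, positivity and monotone decay of $s$ from \eqref{eq:st} and $s'=-nsq\leq 0$, and the comparison bound $q\leq Q=\max\{q_0,\sqrt{\kappa s_0}\}$) is essentially identical to the paper's. Where you diverge is the convergence to $(0,0)$. The paper argues informally in LaSalle style: $s$ is monotone so it has a limit, and since $(0,0)$ is the only steady state in the invariant box the limit must be zero; it then claims $q$ is eventually monotone once $s$ drops below $Q^2/\kappa$ (a step that, as stated, is not airtight, since $q'=-q^2+\kappa s$ can still be positive when $q$ is small). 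Your route replaces this with a fully quantitative argument: if $s_\infty>0$ then \eqref{eq:st} forces $\int_0^\infty q<\infty$, Barbalat (using the Lipschitz bound on $q$ from boundedness of $-q^2+\kappa s$) gives $q\to 0$, and then $q'\to\kappa s_\infty>0$ contradicts boundedness of $q$; the final envelope estimate $\limsup q\leq\sqrt{\kappa\epsilon}$ cleanly delivers $q\to 0$ without any monotonicity claim on $q$. Both approaches exploit the same structural facts (monotonicity of $s$ and the Riccati form of the $q$-equation); yours trades the phase-plane/steady-state intuition for explicit integrability estimates and is, if anything, the more rigorous of the two at the degenerate equilibrium.
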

\begin{proof}
  First, we show $q(t)\geq0$ for all $t\geq 0$. Suppose the argument
  is false, then there exists a time $t_0$ such that $q(t_0)=0$ and
  $q'(t_0)\leq0$.
  On the other hand, $q'(t_0)=\kappa s(t_0)>0$, which
  leads to a contradiction.

  Then, by \eqref{eq:st}, we get $s(t)\leq s_0$. Therefore, $s$ is
  bounded.
  
  Next, we claim that $q(t)\leq Q:=\max\{q_0, \sqrt{\kappa
    s_0}\}$, using a similar argument by contradiction as the first
  part. Given any $\epsilon>0$, suppose there exists a $t_0$ such that 
  $q(t_0)=Q+\epsilon$, and $q(t_0+)>Q+\epsilon$. Then,
  \[q'(t_0)=-(Q+\epsilon)^2+\kappa
  s(t_0)\leq-Q^2-\epsilon(2Q+\epsilon)+\kappa
  s_0=-\epsilon(2Q+\epsilon)<0.\]
  Therefore, $q(t_0+)<Q+\epsilon$, which leads to a
  contradiction. The proof is finished by taking $\epsilon\to0$.

  Finally, for the asymptotic behavior, we first observe that $s$ is bounded
  and decreasing and hence has a limit. Since the only steady state
  for $(q,s)\in[0,Q]\times[0,s_0]$ is $(0,0)$,
  $\lim_{t\to\infty}s(t)=0$. For $q$, let
  $t_1=\inf\{t\geq0:s(t)<\frac{Q^2}{\kappa}\}$. Clearly, $t_1$ is
  finite. $q(t)$ is decreasing for $t\geq t_1$ and hence has a
  limit. Due to the only steady state $(0,0)$, the limit has to be 
  $\lim_{t\to\infty}q(t)=0$. This concludes the proof.
\end{proof}

The more subtle case is when $q_0<0$, namely the initial velocity is
pointing towards the origin. Without the term $\kappa s$,
the dynamics $q'=-q^2$ is known to blow up to $-\infty$ in finite
time. The $\kappa s$ term could help avoid the blowup. The following
theorem describes such phenomenon. Remarkably, when $n\geq2$, the
blowup won't happen for any initial configuration, no matter how small
$s_0$ is. 
\begin{theorem}\label{thm:EPb1}
Let $n\geq2$. Consider the dynamics \eqref{eq:qs} with initial data
$q(0)<0$ and $s(0)>0$. Then, $(q,s)$ remains uniformly bounded in all
time. Moreover, $(q(t),s(t))$ converges to $(0,0)$ as $t\to\infty$.
\end{theorem}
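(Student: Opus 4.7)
My plan is to exploit that the $(q,s)$ subsystem \eqref{eq:qs} is a closed planar autonomous ODE, and to search for a first integral that reduces the entire analysis to elementary phase-plane study. Since $s(t)>0$ is preserved by \eqref{eq:st}, any power of $s$ may be used as an integrating factor. A direct computation yields
\[\frac{d}{dt}\bigl(q^2 s^{-2/n}\bigr) = 2\kappa\, q\, s^{(n-2)/n},\qquad \frac{d}{dt}\bigl(s^{(n-2)/n}\bigr) = -(n-2)\, q\, s^{(n-2)/n},\]
so for $n\geq 3$ the quantity
\[H(q,s) := \frac{q^2}{s^{2/n}} + \frac{2\kappa}{n-2}\, s^{(n-2)/n}\]
is conserved along every trajectory. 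In the borderline case $n=2$ the same derivation produces the conserved quantity $H(q,s) = q^2/s + \kappa\log s$.

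Both terms in the $n\geq 3$ integral are nonnegative, so denoting $H_0 := H(q_0,s_0) > 0$ I immediately obtain
\[s(t) \leq \left(\frac{(n-2)H_0}{2\kappa}\right)^{n/(n-2)} \quad\text{and}\quad q(t)^2 \leq H_0\, s(t)^{2/n},\]
proving the uniform boundedness claim; the $n=2$ case is handled analogously using $s\leq e^{H_0/\kappa}$. For the convergence statement, the level curve $\{H=H_0\}\cap\{s>0\}$ is a simple arc whose closure includes the origin and meets the axis $\{q=0\}$ at exactly one interior point $(0,s_\ast)$. Starting from $q_0<0$, $s_0>0$, the equation $s'=-nsq$ drives $s$ strictly upward along the lower branch $\{q<0\}$, and the trajectory reaches $(0,s_\ast)$ in finite time (an elementary computation near the crossing, where $q'\to \kappa s_\ast > 0$). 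Beyond that moment $q>0$ and $s$ is strictly decreasing; together with the a priori upper bound this forces $s(t)\to s_\infty\geq 0$. The only $\omega$-limit point consistent with conservation of $H$ and the steady-state equations of \eqref{eq:qs} is $(0,0)$, yielding $(q(t),s(t))\to(0,0)$.

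The main obstacle is uncovering the conservation law $H$: the system admits no obvious Lyapunov functional, and the comparison strategy of Lemma \ref{lem:qsplus} fails in the regime $q_0<0$ because ruling out a putative blowup toward $q=-\infty$ would require an a priori lower bound on $s$, which itself depends on the unknown sign history of $q$ through $s(t) = s_0\exp\bigl[-n\int_0^t q(\tau)\,d\tau\bigr]$. The first integral $H$ short-circuits this circular dependence by encoding both bounds in a single algebraic identity, after which the phase-plane analysis is routine.
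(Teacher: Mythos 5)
Your proof is correct, and it takes a genuinely different route from the paper's. The key computation checks out: with $q'=-q^2+\kappa s$ and $s'=-nsq$ one indeed gets $\frac{d}{dt}(q^2s^{-2/n})=2\kappa q\,s^{(n-2)/n}$ and $\frac{d}{dt}(s^{(n-2)/n})=-(n-2)q\,s^{(n-2)/n}$, so $H=q^2s^{-2/n}+\tfrac{2\kappa}{n-2}s^{(n-2)/n}$ (resp. $q^2/s+\kappa\log s$ for $n=2$) is an exact first integral, and since both bounds on $s$ and on $q^2$ follow algebraically from $H=H_0$, global existence and uniform boundedness are immediate; the phase-plane argument for convergence (strict monotonicity of $s$ on each side of $\{q=0\}$, finite-time crossing because $q'\to\kappa s_\ast>0$ near $(0,s_\ast)$, and the fact that $(0,0)$ is the only equilibrium on the closure of the level curve) is sound. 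The paper proceeds very differently: it uses a Riccati comparison to bound the blowup time from below, feeds that into $s(t)=s_0\exp[-n\int_0^t q]$ to get the pointwise lower bound $s(t)\geq s_0t^n(-q(t))^n$ as in \eqref{eq:sest}, and then observes that the resulting term $\kappa s_0t^n(-q)^n$ dominates $(-q)^2$ in \eqref{eq:qest3} when $n>2$; the case $n=2$ is critical there and requires a second, more delicate bootstrap iteration. Your first integral short-circuits exactly the circular dependence you identify, treats $n=2$ and $n>2$ on essentially the same footing, and incidentally explains the closed periodic orbits the paper observes for the constant-background system \eqref{eq:qst} (where an analogous conserved quantity $q^2\tilde s^{-2/n}+\tfrac{2\kappa}{n-2}\tilde s^{(n-2)/n}+\tfrac{\kappa c}{n}\tilde s^{-2/n}$ exists). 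What the paper's bootstrap buys in exchange is quantitative information along the trajectory — the intermediate estimates are reused to derive the decay rates of Lemma \ref{lem:qsasymp}, which your conservation law does not directly provide — and robustness to perturbations of the system that destroy exact conservation. For the statement of Theorem \ref{thm:EPb1} itself, your argument is complete.
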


We first prove the theorem for dimension $n\geq3$, or in general $n>2$
($n$ does not to be an integer to make sense of the dynamics
\eqref{eq:qs}).

\begin{proof}[Proof of Theorem \ref{thm:EPb1} for $n\geq3$]
  First, we show that $q$ is bounded from below.
  Let us start with a rough estimate on $q$. Since $\kappa s>0$, 
\[q'\geq-q^2.\]
This implies $q(t)\geq \frac{q_0}{1+tq_0}$. Therefore, blowup can not
happen before $T_0=-\frac{1}{q_0}$.

For any $t<T_0$ and $\tau<t$, we have
\[-\frac{1}{q(t)}+\frac{1}{q(\tau)}\geq -(t-\tau)\quad\Rightarrow\quad
q(\tau)\leq\frac{q(t)}{1-q(t)(t-\tau)},\quad\forall~\tau\in[0,t).\]

Apply the estimate to \eqref{eq:st}, we obtain
\begin{equation}\label{eq:sest}
  s(t)\geq
s_0\exp\left[-n\int_0^t\frac{q(t)}{1-q(t)(t-\tau)}d\tau\right]
=s_0(1-tq(t))^n\geq s_0t^n(-q(t))^n.
\end{equation}

Plug back in \eqref{eq:qs}, we get an improve estimate on the dynamics
of $q$
\begin{equation}\label{eq:qest3}
  q'(t)\geq-(-q(t))^2+\kappa s_0 t^n(-q(t))^n.
\end{equation}

Since $n>2$, the second term on the right hand side of
\eqref{eq:qest3} will dominate the first term, and $q'(t)>0$ if
$-q(t)$ is big enough. This avoids $q$ from becoming more negative,
and hence prevents blowup.

In detail, for $t>T_0/2$,
\[  q'(t)>-(-q(t))^2+\frac{\kappa s_0T_0^n}{2^n}(-q(t))^n
  \geq0,\quad\text{if}\quad
  -q(t)\geq\left(\frac{\kappa s_0T_0^n}{2^n}\right)^{-\frac{1}{n-2}}.
\]
This implies
\[  q(t)\geq \min\left\{q\left(\frac{T_0}{2}\right),-\left(\frac{\kappa
        s_0T_0^n}{2^n}\right)^{-\frac{1}{n-2}}\right\},\quad\forall~t>\frac{T_0}{2}.\]

Together with the rough estimate $q(t)\geq 2q(0)$ for all $t\leq
T_0/2$, we end up with a uniform in time lower bound on $q$.

To get a uniform bound on $s$, we argue by contradiction. Suppose $s$
is not uniformly bounded. Since $s$ is bounded in all finite time, we
must have $\lim_{t\to\infty}s(t)=\infty$. By
Lemma \ref{lem:qsplus}, it must be true that $q(t)<0$ for all time,
and hence $s(t)$ is increasing.

On the other hand, there exists a time $t_0$ such that
$s(t_0)>\kappa^{-1}(q_{\min}^2+1)$, where $q_{\min}$ denotes the lower bound of
$q$. Then,
\[q'(t)=-q(t)^2+\kappa s(t)\geq -q_{\min}^2+\kappa s(t_0)>1,\quad
\forall~t\geq t_0.\]
Consequently, we have $q(t)\geq q_{\min}+(t-t_0)$, and then
$q(t_0+(-q_{\min}))\geq0$. This leads to a contradiction.

Therefore, there exists a finite time $t_1$ such that $s(t_1)$ reaches
the maximum, and $q(t_1)=0$. Starting from $t_1$, we can apply
Lemma~\ref{lem:qsplus} and get the upper bound on $q$ and the
asymptotic behaviors.
\end{proof}

The two-dimensional case is \emph{critical}, as the estimate \eqref{eq:qest3} does not
directly imply $q'(t)>0$ for large $-q(t)$, if $s_0$ is
small. To show boundedness of solutions for all $s_0>0$, we need to
make further improvements to our estimates.

\begin{proof}[Proof of Theorem \ref{thm:EPb1} for $n=2$]
  We start with the same argument as $n>2$ case, which implies
  \eqref{eq:sest}
  \begin{equation}\label{eq:qest4}
    q'(t)\geq \big(-1+s_0t^2\big)(-q(t))^2.
  \end{equation}
  Then, $q(t)'>0$ for any $t>s_0^{-1/2}$. Hence, blowup won't happen
  after $T_1=s_0^{-1/2}$. Also, blowup can not happen before
  $T_0=-\frac{1}{q_0}$. Therefore, $q(t)$ is bounded from below in all
  time if $T_0>T_1$, or equivalently $s_0>(-q_0)^2$.
  However, if $s_0$ is small $s_0\leq (-q_0)^2$,
  then blowup can still occur at $t\in(T_0, T_1)$. In this
  scenario, we perform the following improved estimates.

  For any $0\leq\tau<t<T_0$, from \eqref{eq:qest4} we have
\[-\frac{1}{q(t)}+\frac{1}{q(\tau)}\geq
-\left[t-\tau-\frac{s_0}{3}(t^3-\tau^3)\right]
~~\Rightarrow~~
q(\tau)\leq\frac{q(t)}{1-q(t)\left[t-\tau-\frac{s_0}{3}(t^3-\tau^3)\right]}.\]

This leads to an improved estimate on
\[\int_0^tq(\tau)d\tau\geq
\int_0^t\frac{1}{\frac{1}{q(t)}-\tau+\frac{s_0}{3}(\tau^2-3t\tau+3t^2)\tau}d\tau
\geq\int_0^t\frac{1}{\frac{1}{q(t)}-\tau+\frac{s_0}{3}t^2\tau}d\tau
\]
and then
\[s(t)\geq
s_0\exp\left[-2\int_0^t\frac{1}{\frac{1}{q(t)}-(1-\frac{s_0}{3}t^2)\tau}d\tau\right]
=s_0\left[1-\left(1-\frac{s_0}{3}t^2\right)tq(t)\right]^{\frac{2}{1-\frac{s_0}{3}t^2}}.\]

Compared with the estimate \eqref{eq:sest} with
$s(t)\gtrsim(-q(t))^2$, the improved estimate has
$s(t)\gtrsim(-q(t))^\alpha$, with
$\alpha=\frac{2}{1-\frac{s_0}{3}t^2}>2$. Now, we are able to finish
the proof using the same argument as in the $n\geq3$ case.
Indeed, for $t>T_0/2$,
\[  q'(t)>-(-q(t))^2+\kappa s_0\left(1-\frac{s_0T_0^2}{12}\right)(-q(t))^\frac{2}{1-\frac{s_0T_0^2}{12}}
  \geq0,\]
if $-q(t)$ is large enough,
\[ -q(t)\geq\left[\kappa s_0
  \left(1-\frac{s_0T_0^2}{12}\right)\right]^{-\frac{12-s_0T_0^2}{2s_0T_0^2}}.\]

The uniform bound on $s$ and asymptotic behaviors can then be obtained
the same as the $n>2$ case.
\end{proof}

\begin{remark}
  For $n<2$, the dynamics can lead to a finite time blowup if $q_0<0$
  and $s_0$ is small enough. Therefore, $n\geq2$ is a critical
  assumption for Theorem \ref{thm:EPb1} to be valid. We skip the
  discussion for the $n<2$ case, as it is not relevant under our setup. 
\end{remark}

\subsubsection{Asymptotic behavior}
The next lemma shows the detailed asymptotic behavior of $(q,s)$ as time
approaches infinity. The convergence rate will be useful for later discussions.
Without loss of generality, we set $q_0\geq0$. This is because we know
that if $q_0<0$, there exists a finite time $t_*$ such that
$q(t_*)=0$. Same convergence rate can be obtained by a simple shift in time.

\begin{lemma}\label{lem:qsasymp}
  Consider the dynamics \eqref{eq:qs} with $q_0\geq0$ and
  $s_0>0$. Then, there exist two positive constants $C_q$ and $\bar{C}_s$,
  depending on $n$ and $(q_0, s_0)$, such that
  \begin{equation}\label{eq:qsasymp}
    q(t)\leq C_q(t+1)^{-1},\quad s(t)\leq \bar{C}_s(t+1)^{-2}, \quad
    \forall~t\geq0.
  \end{equation}
  Moreover, there exists a positive constant
  $C_s$, depending on $n$ and $(q_0, s_0)$, such that
  \begin{equation}\label{eq:sasymp}
    s(t)\leq \begin{cases}
      C_s(t+1)^{-2}\left(\ln(t+1)+1\right)^{-1}&n=2\\C_s(t+1)^{-n}&n\geq3.
      \end{cases}.
  \end{equation}
\end{lemma}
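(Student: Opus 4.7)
Since Lemma~\ref{lem:qsplus} already gives $q(t)\in[0,Q]$ and $s(t)\in(0,s_0]$ with $(q,s)\to(0,0)$, my plan is to convert the dynamics into an explicit algebraic relation between $q$ and $s$ along each trajectory, then bootstrap that relation to extract sharp rates. Since $q'|_{q=0}=\kappa s>0$, the quantity $q(t)$ is strictly positive for $t>0$, so $s\mapsto q$ is well defined (and decreasing in $s$) along the trajectory after an arbitrarily small initial time.

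\textbf{Step 1 (Phase-plane identity).} First I would eliminate time by computing
\[\frac{d(q^2)}{ds}=\frac{2qq'}{s'}=\frac{2q^2}{ns}-\frac{2\kappa}{n},\]
a linear ODE in $w=q^2$ with integrating factor $s^{-2/n}$. Integrating from $s_0$ to $s(t)$ yields the closed form
\[q(t)^2=\begin{cases}\left(\dfrac{s(t)}{s_0}\right)^{2/n}\!q_0^2+\dfrac{2\kappa}{n-2}\left[s_0^{(n-2)/n}s(t)^{2/n}-s(t)\right], & n\geq 3,\\[4pt]
\dfrac{s(t)}{s_0}\,q_0^2+\kappa\, s(t)\ln\!\dfrac{s_0}{s(t)}, & n=2.\end{cases}\]

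\textbf{Step 2 (Two-sided comparison between $q$ and $s$).} Because $s(t)\to 0$ (by Lemma~\ref{lem:qsplus}), I would choose a time $T_*$ beyond which $s(t)$ is small enough that the leading term in the identity dominates. For $n\geq 3$ this gives constants $c_n,C_n>0$, depending on $(n,\kappa,q_0,s_0)$, with
\[c_n\, s(t)^{1/n}\le q(t)\le C_n\, s(t)^{1/n},\qquad t\ge T_*,\]
while for $n=2$ I would obtain the analogous comparison
\[c_2\,\bigl(s(t)\ln(1/s(t))\bigr)^{1/2}\le q(t)\le C_2\,\bigl(s(t)\ln(1/s(t))\bigr)^{1/2},\qquad t\ge T_*.\]

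\textbf{Step 3 (Decay of $s$).} Plugging the lower bound on $q$ into $s'=-nsq$ gives, for $n\ge 3$,
\[s'\le -n c_n\, s^{1+1/n}\quad\Longleftrightarrow\quad (s^{-1/n})'\ge c_n,\]
which integrates to $s(t)\le C_s(t+1)^{-n}$. For $n=2$, the same substitution yields $(s^{-1/2})'\ge c\,(\ln(1/s))^{1/2}$; setting $w=s^{-1/2}$ this becomes $w'\ge c(2\ln w+\ln s_0)^{1/2}$, and separating variables and integrating (or comparing with the ODE $w'=c\sqrt{\ln w}$) produces $w(t)\gtrsim (t+1)(\ln(t+1)+1)^{1/2}$, i.e.\ $s(t)\le C_s(t+1)^{-2}(\ln(t+1)+1)^{-1}$. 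This establishes \eqref{eq:sasymp}.

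\textbf{Step 4 (Decay of $q$ and the weaker bound on $s$).} Combining the upper bound $q\le C_n s^{1/n}$ from Step 2 with the decay of $s$ from Step 3 gives $q(t)\le C_q(t+1)^{-1}$ for $t\ge T_*$; on $[0,T_*]$ the uniform bound from Lemma~\ref{lem:qsplus} can be absorbed into $C_q$. The weaker bound $s(t)\le\bar C_s(t+1)^{-2}$ follows immediately from \eqref{eq:sasymp} since $(t+1)^{-n}\le (t+1)^{-2}$ for $n\ge 3$ and $(\ln(t+1)+1)^{-1}\le 1$ for $n=2$.

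\textbf{Main obstacle.} The principal technical difficulty is the critical dimension $n=2$, where the phase-plane identity involves a logarithm and the resulting ODE for $s$ does not separate cleanly. Producing the sharp logarithmic correction $(\ln(t+1)+1)^{-1}$, rather than a cruder power-free bound, requires tracking the $\sqrt{\ln w}$ nonlinearity carefully (or a comparison argument), and this is where I would spend most of the effort.
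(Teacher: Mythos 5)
Your proof is correct, and it takes a genuinely different route from the paper's. Your Step 1 identity is equivalent to observing that \eqref{eq:qs} has an exact first integral: $q^2s^{-2/n}+\tfrac{2\kappa}{n-2}\,s^{(n-2)/n}$ (respectively $q^2/s+\kappa\ln s$ when $n=2$) is conserved along trajectories; this immediately yields the two-sided comparison $q\asymp s^{1/n}$ (with the logarithmic correction at $n=2$), after which both rates follow from a scalar Riccati-type inequality for $s$ alone. The paper instead rescales self-similarly, $\hat q=(t+1)q$, $\hat s=(t+1)^2s$, passes to logarithmic time, and analyzes the autonomous system \eqref{eq:qshat}: convergence to the equilibrium $(1,0)$ gives $q(t)=O(t^{-1})$, and the sharper decay of $s$ is extracted from an integral representation of $\hat s$ for $n\ge3$ and from the auxiliary pair $f=\hat q-1$, $g=\kappa\hat s-\hat q+1$ for $n=2$. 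Your route is arguably cleaner and makes the criticality of $n=2$ transparent (the exponent $2/n$ degenerates to $1$); the paper's route has the side benefit of identifying the precise limit $\hat q\to1$, i.e.\ $C_q$ close to $1$, which is invoked later in Remark \ref{rmk:EP-c0-conv} — your $C_q$ is less explicit, but the lemma only asserts existence. Two small points to make airtight: (i) rather than parametrizing $q$ by $s$ (which needs strict monotonicity of $s$, slightly delicate at $t=0$ when $q_0=0$), just check directly that the time derivative of the conserved quantity vanishes; (ii) in Step 3 for $n=2$, the implication $w'\ge c\sqrt{\ln w}\Rightarrow w\gtrsim (t+1)\sqrt{\ln(t+1)+1}$ deserves an explicit subsolution verification, e.g.\ with $W(t)=\epsilon(t+1)\sqrt{\ln(t+1)+1}$ for small $\epsilon$. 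Both are routine.
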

\begin{proof}
We apply the following transformation. Let
\[\hat{q}(t)=(t+1)q(t),\quad \hat{s}(t)=(t+1)^2s(t).\]
We can rewrite the dynamics \eqref{eq:qs} as
\[  \begin{cases}
    \hat{q}'=\frac{1}{t+1}\left(-\hat{q}^2+\hat{q}+\kappa \hat{s}\right),\\
    \hat{s}'=\frac{1}{t+1}(2-n\hat{q})\hat{s}.
  \end{cases}
\]
and the pre-factor $\frac{1}{t+1}$ can be absorbed by changing the time
variable to $\hat{t}=\ln(t+1)$. So, with respect to $\hat{t}$, the
dynamics reads
\begin{equation}\label{eq:qshat}
  \begin{cases}
    \hat{q}'=\left(-\hat{q}^2+\hat{q}+\kappa \hat{s}\right),\\
    \hat{s}'=(2-n\hat{q})\hat{s}.
  \end{cases}
\end{equation}
From a standard study of the autonomous system in the phase plane (see
Figure \ref{fig:EP_sqhat}), we
know that for any $\hat{q}_0\geq0, \hat{s}_0>0$, the dynamics
converges to the steady state $(1,0)$.
This implies $q(t)=O(t^{-1})$, and $s(t)=o(t^{-2})$.
In particular, we can pick $C_q=\hat{q}_{\max}$.

\begin{figure}[ht]
  \includegraphics[width=.48\linewidth]{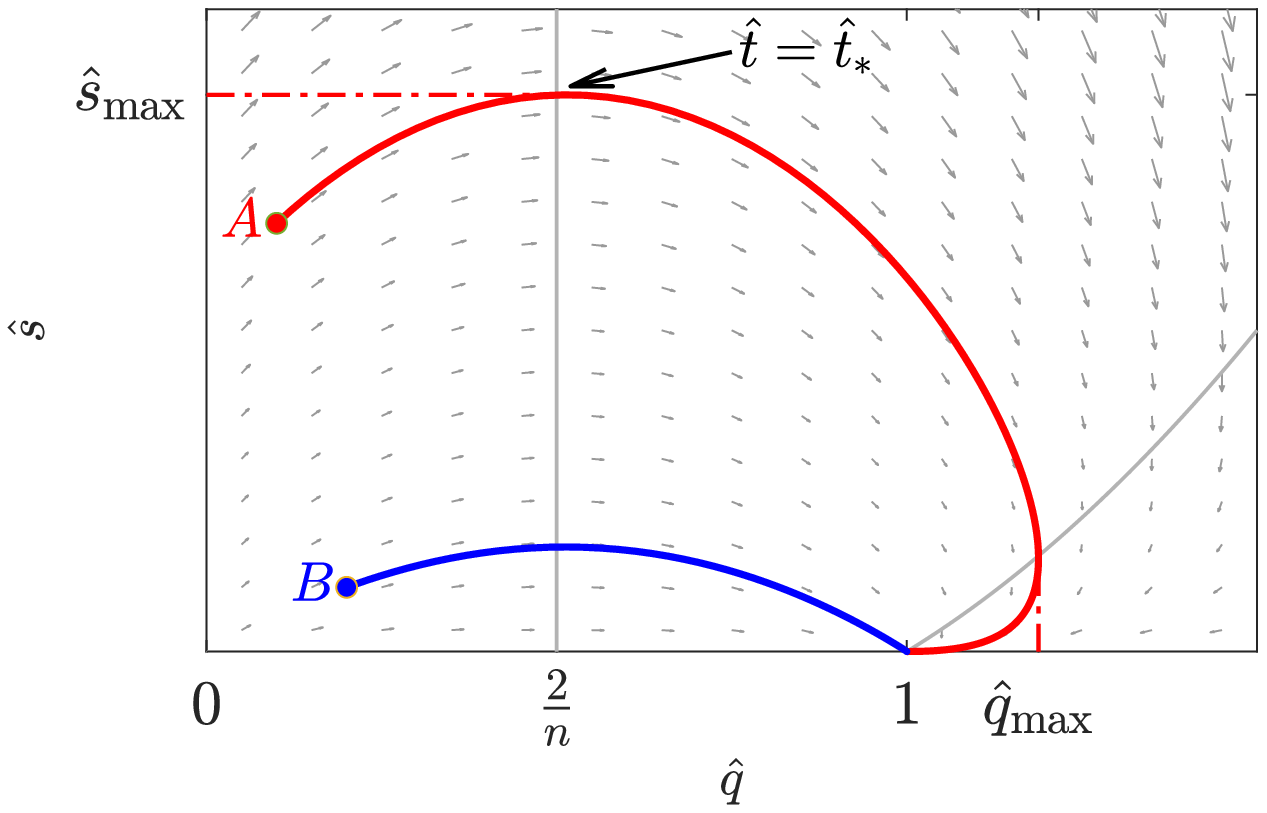}
  \includegraphics[width=.48\linewidth]{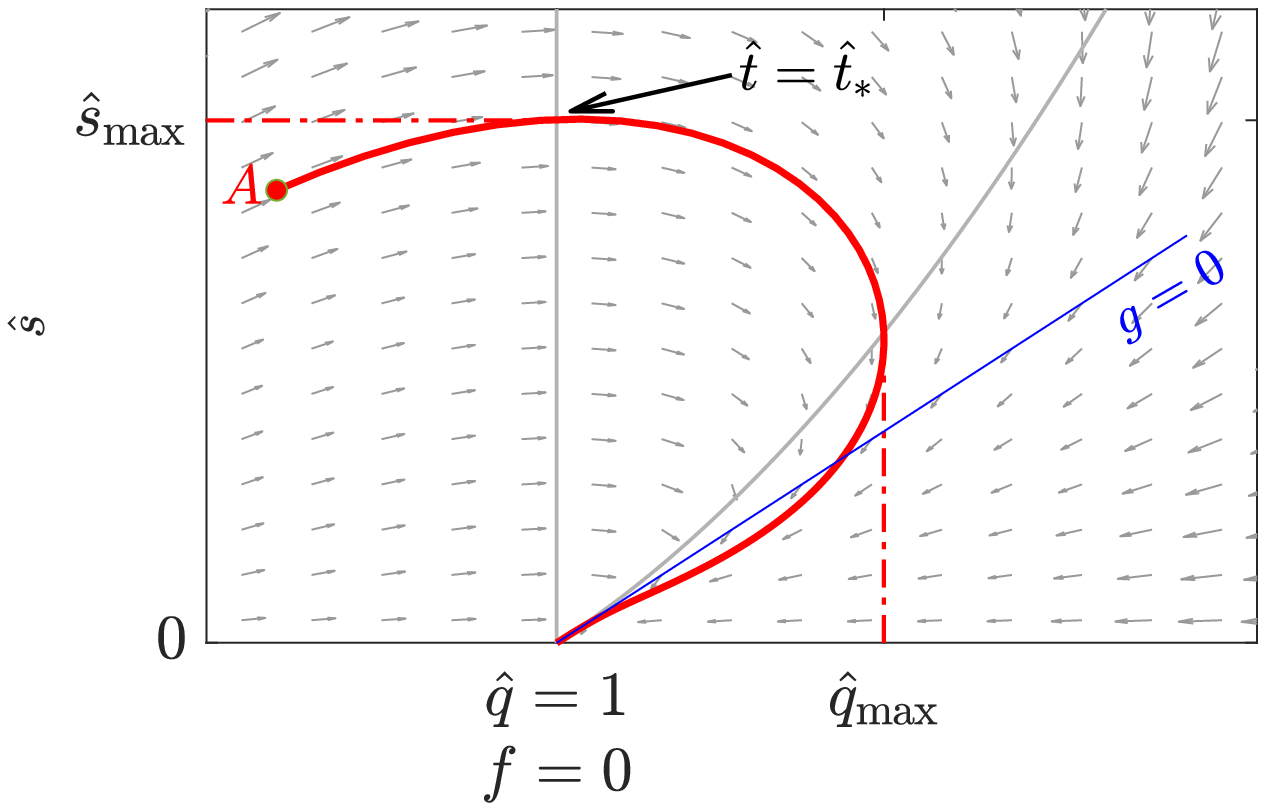}
  \caption{Illustration of the the phase plane of $(\hat{q},\hat{s})$.
    Left figure: $n\geq3$, right figure $n=2$.}\label{fig:EP_sqhat}
\end{figure}

Now, we aim to obtain a better decay estimate on $s(t)$.

For $n\geq3$. we observe from \eqref{eq:qshat} and Figure \ref{fig:EP_sqhat} that
$\hat{s}$ obtain its maximum value $\hat{s}_{\max}$ at a finite time $\hat{t}_*$ when
$\hat{q}(\hat{t}_*)=\frac2n$ (or 
$\hat{q}>\frac2n$ if $\hat{q}_0>\frac2n$, where $\hat{t}_*=0$).
We can write
\begin{equation}\label{eq:shat}
  \hat{s}(\hat{t})=\hat{s}_{\max}\exp\left[\int_{\hat{t}_*}^{\hat{t}}
    \left(2-n\hat{q}(\hat{\tau})\right)\,d\hat{\tau}\right]
=\hat{s}_{\max}\exp\left[(2-n)(\hat{t}-\hat{t}_*)+n\int_{\hat{t}^*}^{\hat{t}}
  \left(1-\hat{q}(\hat{\tau})\right)\,d\hat{\tau}\right],
\end{equation}
where we define $f(\hat{t})=q(\hat{t})-1$ which satisfies
\[f'=-f^2-f+\kappa\hat{s}\geq -f^2+f,\quad f(t_*)\geq \frac2n-1.\]
Explicit calculation yields
\[f(\hat{t})\geq-\frac{n-2}{n-2+2e^{\hat{t}-\hat{t}_*}}\geq-\frac{n-2}{2}e^{-(\hat{t}-\hat{t}_*)},
  \quad\forall~\hat{t}\geq\hat{t}_*.\]
Therefore, the last integral in \eqref{eq:shat} is uniformly bounded
\[\int_{\hat{t}_*}^{\hat{t}}
  \left(1-\hat{q}(\hat{\tau})\right)\,d\hat{\tau}\leq\int_0^\infty\frac{n-2}{2}e^{-\hat{\tau}}\,d\hat{\tau}=\frac{n-2}{2}.\]
Finally, we obtain
\begin{equation}\label{eq:stimproved}
  s(t)=\hat{s}(\hat{t})(t+1)^{-2}\leq\hat{s}_{\max}e^{(n-2)\hat{t}_*+\frac{n(n-2)}{2}}(t+1)^{2-n}(t+1)^{-2}=:C_s(t+1)^{-n}.
\end{equation}

We are left with the case $n=2$, which turns out to be critical. To
obtain the logarithmic improvement in \eqref{eq:qsasymp}, we need to
show $\hat{s}(\hat{t})\lesssim (\hat{t}+1)^{-1}$. Define two new variables
\[f=\hat{q}-1, \quad g=\kappa\hat{s}-\hat{q}+1.\]
Then, \eqref{eq:qshat} can be equivalently expressed as
\[
  \begin{cases}
    f'=-f^2+g,\\
   g'=(-1-2f)g-f^2,
 \end{cases}
 \quad\text{with}\quad
  \begin{cases}
    f(\hat{t}_*)=\hat{q}(\hat{t}_*)-1\geq0,\\
    g(\hat{t}_*)=\kappa\hat{s}(\hat{t}_*)-f(\hat{t}_*).
 \end{cases}
  \]
Clearly, $f\geq0$ is an invariant region. Then, we can easily find an upper
bound of $g$ by
\[g(\hat{t})\leq \min\left\{g(\hat{t}_*) e^{-(\hat{t}-\hat{t}_*)},0\right\}.\]
Plugging back into the dynamics of $f$, we immediately obtain an upper
bound of $f$
\[f(\hat{t})\leq C(\hat{t}-\hat{t}_*+1)^{-1},
  \quad\forall~\hat{t}\geq\hat{t}_*,\]
where $C$ depends on $f(\hat{t}_*)$ and $g(\hat{t}_*)$.
Finally, we conclude that
\[\hat{s}(\hat{t})=\frac{1}{\kappa}\big(f(\hat{t})+g(\hat{t})\big)\lesssim(\hat{t}+1)^{-1}.\]
\end{proof}

\begin{remark}\label{rmk:EP-c0-conv}
In \eqref{eq:qsasymp}, the constant $C_q\geq1$ as
$\hat{q}_{\max}\geq1$. Note that if $s_0$ is small enough,
then $C_q$ is close to 1. In particular, as illustrated in the left
figure in Figure \ref{fig:EP_sqhat}, For the trajectory start at point
$B$, $C_q=1$.
\end{remark}

\subsubsection{Explicit subcritical regions}
We now switch to discuss the dynamics of $(p,\rho)$ in \eqref{eq:EPfull}.
Recall
\begin{equation}\label{eq:prho}
  \begin{cases}
    p'=-p^2+\kappa(\rho-(n-1)s),\\
    \rho'=-\rho(p+(n-1)q).
  \end{cases}
\end{equation}

Note that even if $(q,s)$ stays bounded uniformly in time, they affect
the dynamics of $(p,\rho)$ when $n\geq2$, and hence the subcritical
region $\Sigma$ could be different than \eqref{eq:Sigma1D}.

The goal here is to find a more explicit subcritical region. In
particular, we need to make sure that the subcritical region is not an
empty set in general.

To better understanding the dynamics of \eqref{eq:prho}, we proceed
with the following transformations. First, consider the dynamics of
$(p/\rho, 1/\rho)$
\begin{equation}\label{eq:prho2}
  \begin{cases}
    \displaystyle\left(\frac{p}{\rho}\right)'=\,\kappa
    +(n-1)\left(q\cdot\frac{p}{\rho}-\kappa s\cdot\frac{1}{\rho}\right),\\
    \displaystyle\left(\frac{1}{\rho}\right)'=\,\frac{p}{\rho}+(n-1)\frac{q}{\rho}.
  \end{cases}
\end{equation}
To absorb the explicit dependence on $q$, we introduce new quantities
$(w,v)$ along the characteristic paths as follows
\begin{equation}\label{eq:wvdef}
  w = \frac{p}{\rho}\cdot e^{(n-1)A(t)},\quad
  v = \frac{1}{\rho}\cdot e^{(n-1)A(t)},
\end{equation}
where $A(t)$ is defined as
\begin{equation}\label{eq:At}
 A(t):=-\int_0^tq(\tau)\,d\tau=\frac{1}{n}\ln\frac{s(t)}{s_0}.
\end{equation}
The second equality directly comes from \eqref{eq:st}. As we have
already know $s$ is uniformly bounded in time, so does $A$.
Then, the dynamics of $(w,v)$ reads
\begin{equation}\label{eq:wv}
  \begin{cases}
    \displaystyle w'=\,\kappa e^{(n-1)A}-\kappa\big(c+(n-1)s\big)v,\\
    v'=\,w.
  \end{cases}
\end{equation}

Let us first summarize the threshold condition for $n=1$. In this
case, \eqref{eq:wv} simply becomes
\[w'=\kappa, \quad v'=w.\]
Therefore, we obtain
\[w(t)=w_0+\kappa t,\quad\text{and}\quad v(t)=v_0+w_0t+\frac{\kappa}{2}t^2.\]
Then, $v(t)$ won't reach zero if and only if $w_0>-\sqrt{2\kappa
  v_0}$. From the definition of $(w,v)$ \eqref{eq:wvdef}, this is
equivalent to $p_0>-\sqrt{2\kappa\rho_0}$.

When $n\geq2$, we have $e^{(n-1)A}\not\equiv1$ and
$(n-1)sv\not\equiv0$. The two terms reflect the contribution of
$(q,s)$ to the dynamics of $(w,v)$.
In particular,
$e^{(n-1)A(t)}=\left(\frac{s(t)}{s_0}\right)^{\frac{n-1}{n}}$ vanishes
as $t\to\infty$ due to Lemma \ref{lem:qsasymp}. Therefore, the
behavior of $(w,v)$ is different from the 1D case.

Let us state our result.
\begin{theorem}\label{thm:EP-prho-sub}
  Let $n\geq2$. There exists 
  threshold function $\sigma_+~:~\R_+\to\R_+$, depending on $(q_0,s_0)$, such that
  \[\left\{(p_0, q_0, s_0,
    \rho_0)~\left|~p_0>-\rho_0\sigma_+\left(\frac{1}{\rho_0};~q_0,s_0\right)\right.\right\}\subset \Sigma.\]
\end{theorem}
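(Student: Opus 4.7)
The plan is to exploit the linear structure of the $v$-equation derived from \eqref{eq:wv} in the zero-background case. Eliminating $w$, we obtain the second-order linear equation $v'' + \kappa(n-1) s(t) v = \kappa e^{(n-1)A(t)}$ with initial data $v(0)=v_0$, $v'(0)=w_0$. The coefficients $s(t)$ and $e^{(n-1)A(t)}$ are determined entirely by the already-analyzed $(q,s)$ dynamics (Theorem \ref{thm:EP_qsc0} and Lemma \ref{lem:qsasymp}), so this is a genuine linear inhomogeneous ODE with prescribed, well-behaved forcing. Global boundedness of $(p,\rho)$ along a characteristic then reduces to $v(t) > 0$ and controlled growth of $(v,w)$ for all $t \geq 0$, since $\rho = e^{(n-1)A}/v$ and $p = w/v$ with $A$ bounded.

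First I would introduce the fundamental solutions $\phi_1,\phi_2$ of the associated homogeneous equation, normalized by $\phi_1(0)=1$, $\phi_1'(0)=0$, $\phi_2(0)=0$, $\phi_2'(0)=1$, whose Wronskian is identically $1$. Variation of parameters gives $v(t) = \Psi(t) + w_0\, \phi_2(t)$, where $\Psi(t) := v_0 \phi_1(t) + \kappa \int_0^t [\phi_2(t)\phi_1(\tau) - \phi_1(t)\phi_2(\tau)]\, e^{(n-1)A(\tau)}\, d\tau$. Provided $\phi_2(t) > 0$ for all $t > 0$, the requirement $v(t) > 0$ on $[0,\infty)$ is equivalent to $w_0 > \sup_{t>0}\,[-\Psi(t)/\phi_2(t)]$. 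I would then define $\sigma_+(v_0;q_0,s_0) := \inf_{t>0}\,\Psi(t)/\phi_2(t)$, so that the subcritical condition reads $w_0 > -\sigma_+$, which translates through \eqref{eq:wvdef} back to $p_0 > -\rho_0\,\sigma_+(1/\rho_0;q_0,s_0)$.

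Two technical points then remain. (i) Show $\phi_2(t) > 0$ for every $t > 0$. Because $\phi_2'' = -\kappa(n-1)s\phi_2 \le 0$ wherever $\phi_2 \ge 0$, the solution is concave on that set and bounded above by $t$; integrating once gives $\phi_2'(t) \ge 1 - \kappa(n-1)\int_0^t \tau s(\tau)\,d\tau$, and the improved decay \eqref{eq:sasymp} makes $\int_0^\infty \tau s\,d\tau$ finite for $n\ge 3$. A bootstrap---controlling $\phi_2$ explicitly on an initial interval $[0,T_*]$ chosen so that the $s$-tail from $T_*$ is small---then forces $\phi_2'>0$ beyond $T_*$, yielding global positivity of $\phi_2$. (ii) Show $\sigma_+ \in (0,\infty)$. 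Positivity of $\sigma_+$ follows since $\Psi(t)/\phi_2(t) \to +\infty$ as $t \to 0^+$ (as $\Psi(0) = v_0 > 0$ while $\phi_2(0) = 0$) together with continuity; finiteness follows from the asymptotic $\phi_2(t) \sim c\, t$ and the corresponding growth of $\Psi(t)$ bounded via the integrability of $e^{(n-1)A(\tau)}$ (with a $\ln t$ correction absorbed by the denominator $\phi_2(t) \sim ct$).

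The main obstacle is the critical dimension $n = 2$: the tail integrals $\int \tau s\,d\tau$ and $\int e^{A}\,d\tau$ barely diverge (only logarithmically, by \eqref{eq:sasymp}), so the naive concavity argument for $\phi_2>0$ does not close, and one must invoke a sharper nonoscillation criterion of Hille-type, exploiting $t\int_t^\infty s(\tau)\,d\tau \to 0$, to rule out any zero of $\phi_2$, and a more careful cancellation in the ratio $\Psi/\phi_2$ to keep $\sigma_+$ finite. Once $\phi_2>0$ and $\sigma_+\in(0,\infty)$ are established, the variation-of-parameters formula together with the decay of $s$ and $e^{(n-1)A}$ delivers polynomial control on $v$ and $w$ along every subcritical trajectory, hence boundedness of $p$ and $\rho$, and Theorem \ref{thm:EPlocal} then closes the inclusion into $\Sigma$.
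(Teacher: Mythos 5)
Your reduction to the linear inhomogeneous equation $v''+\kappa(n-1)s(t)v=\kappa e^{(n-1)A(t)}$ and the variation-of-parameters representation $v=\Psi+w_0\phi_2$ is a genuinely different route from the paper's. The paper never passes through Sturm theory: it integrates $w'$ directly, bounds the two integrals $\int_0^t e^{(n-1)A}$ and $\int_0^t sv$ from above and below using the decay rates of Lemma \ref{lem:qsasymp}, and then minimizes an explicit comparison function (Lemma \ref{lem:lowerbound}) to extract the threshold $D<D_{crit}$, which is then unwound into \eqref{eq:EP_c0_sub}. Your formulation is conceptually cleaner in that it isolates exactly what must be proved --- disconjugacy of the homogeneous equation and positivity of $\inf_{t>0}\Psi/\phi_2$ --- and, if completed, would produce the optimal one-sided threshold for the linearized $(w,v)$ system rather than the paper's cruder sufficient condition.

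That said, the two positivity claims you flag as ``technical points'' are precisely where the content of the theorem lives, and as written they are asserted rather than proved. (a) Your argument for $\phi_2>0$ on $(0,\infty)$ needs $\kappa(n-1)\int_0^\infty\tau s(\tau)\,d\tau<1$; by \eqref{eq:sasymp} this integral is bounded by $C_s/((n-1)(n-2))$, and $C_s$ depends on $(q_0,s_0)$ and need not be small. For large $s_0$ the potential $\kappa(n-1)s$ is large on an initial interval of length comparable to the local oscillation period $(\kappa(n-1)s_0)^{-1/2}$, and concavity alone cannot exclude a zero of $\phi_2$ there; the proposed ``bootstrap on $[0,T_*]$'' does not explain how to rule out a zero \emph{inside} $[0,T_*]$. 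Note that $\phi_2>0$ everywhere is equivalent to the (true but nontrivial) statement that arbitrarily strong expansion $w_0\gg1$ is subcritical --- if $\phi_2(t_2)<0$ then large $w_0$ forces $v$ to vanish before $t_2$ --- so this cannot be had for free. (b) Positivity of $\sigma_+=\inf_{t>0}\Psi/\phi_2$, which is what the claimed range $\sigma_+:\R_+\to\R_+$ and the inclusion of compressive data $p_0<0$ require, needs $\Psi(t)>0$ for \emph{every} $t>0$ plus $\liminf_{t\to\infty}\Psi/\phi_2>0$; you only check the two endpoints, and $\Psi=v_0\phi_1+\kappa\int_0^t(\phi_2(t)\phi_1(\tau)-\phi_1(t)\phi_2(\tau))e^{(n-1)A(\tau)}\,d\tau$ involves $\phi_1$, whose global positivity is likewise unaddressed. (A smaller slip: $A(t)=\tfrac1n\ln(s(t)/s_0)\to-\infty$, so $A$ is not bounded; what saves $\rho=e^{(n-1)A}/v$ is the linear lower bound $v\gtrsim t$, which you do obtain once $\phi_2\sim ct$.) To be fair, the paper's own proof carries an analogous smallness restriction on $C_s$ and only sketches the general case in a remark, so your route is not weaker --- but to count as a proof it must close exactly these two disconjugacy/positivity claims.
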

\begin{remark}
  For $n=1$, $\sigma_+(x)=\sqrt{2\kappa x}$. For $n\geq2$, we obtain a
  similar condition, allowing $p_0$ to be negative. $\sigma_+$ will
  depend on $(q_0,s_0)$, indicating the effect of the spectral gap.
\end{remark}

Let us first consider the case $n\geq3$. Write
\begin{equation}\label{eq:wdyn}
  w(t) = w_0+\kappa \int_0^t e^{(n-1)A(\tau)}\,d\tau-\kappa
  (n-1)\int_0^ts(\tau)v(\tau)\,d\tau.
\end{equation}

\textit{Step 1: upper bounds on $w$ and $v$.} Apply Lemma
\ref{lem:qsasymp} and get
\begin{align*}
  w(t)\leq &\,w_0+\kappa
  s_0^{-\frac{n-1}{n}}\int_0^ts(\tau)^{\frac{n-1}{n}}\,d\tau\,
  \leq w_0+\kappa\left(\frac{C_s}{s_0}\right)^{\frac{n-1}{n}}\int_0^t(\tau+1)^{-(n-1)}\,d\tau\\
  \leq&\,w_0+\frac{\kappa}{n-2}\left(\frac{C_s}{s_0}\right)^{\frac{n-1}{n}}=:w_0+C(q_0,s_0),
\end{align*}
for $n\geq 3$. Therefore, unlike 1D where $w$ can grow linearly in
time, $w$ is uniformly bounded. And $v$ can grow at most linearly
\begin{equation}\label{eq:vupper}
  v(t)\leq
  v_0+\left(w_0+C(q_0,s_0)\right)t.
\end{equation}
\begin{remark}
  If
  $w_0<-C(q_0,s_0)$,
  or equivalently
  $p_0<-C(q_0,s_0)\rho_0$,
  $v(t)$ will become negative in finite time. Hence, such initial data
  lie in the supercritical region.
\end{remark}

\textit{Step 2: lower bounds on $w$ and $v$, assuming
$w_0>-C(q_0,s_0)$.}
Let us control the two integrals in \eqref{eq:wdyn} one by one.
For the first term, by \eqref{eq:qsasymp} and \eqref{eq:At},
we have
\[A(t)\geq-\int_0^tC_q(\tau+1)^{-1}\,d\tau=-C_q\ln(t+1).\]
Then,
\[\kappa\int_0^t
  e^{(n-1)A(\tau)}\,d\tau\geq\kappa\int_0^t(\tau+1)^{-C_q(n-1)}\,d\tau=
  \frac{\kappa}{\gamma+1}\left(1-(t+1)^{-\gamma-1}\right),\]
where $\gamma=C_q(n-1)-2$. Note that from Remark \ref{rmk:EP-c0-conv},
$C_q\geq1$ (strict inequality for $n=3$) , we have $\gamma>0$ for $n\geq3$.

For the second term, apply \eqref{eq:qsasymp} and \eqref{eq:vupper}
\begin{align*}
  \int_0^ts(\tau)v(\tau)\,d\tau\leq&\,\int_0^t C_s(\tau+1)^{-n}\big(
                                     v_0+\left(w_0+C(q_0,s_0)\right)\tau\big)\,d\tau\\
  \leq&\,C_s\left(\frac{v_0}{n-1}+\frac{w_0+C(q_0,s_0)}{n-2}\right).
\end{align*}

Put the two estimates together, we have
\[w(t)\geq
  w_0-C_s\left(\frac{v_0}{n-1}+\frac{w_0+C(q_0,s_0)}{n-2}\right)+\frac{\kappa}{\gamma+1}\left(1-(t+1)^{-\gamma-1}\right).\]
Denote
\[D:=-w_0+C_s\left(\frac{v_0}{n-1}+\frac{w_0+C(q_0,s_0)}{n-2}\right).\]
Then, we get
\begin{align*}
  w(t)\geq&\,
  -D+\frac{\kappa}{\gamma+1}\left(1-(t+1)^{-\gamma-1}\right),\\
  v(t)\geq&\, v_0+\left(-D+\frac{\kappa}{\gamma+1}\right)t
            -\frac{\kappa}{\gamma(\gamma+1)}\left(1-(t+1)^{-\gamma}\right).
\end{align*}

To complete the lower bound estimate, we state the following lemma.
\begin{lemma}\label{lem:lowerbound}
  Let $y(t)$ be a function defined as
  \[y(t)= v_0+\left(\frac{\kappa}{\gamma+1}-D\right)t
    -\frac{\kappa}{\gamma(\gamma+1)}\left(1-(t+1)^{-\gamma}\right).\]
  Then, there exists a constant $D_{crit}=D_{crit}(v_0)>0$, depending
  on initial data $v_0$, and parameters
  $\gamma, \kappa$, such that if $D< D_{crit}$, then $y(t)>0$ for
  all $t\in[0,\infty)$.
\end{lemma}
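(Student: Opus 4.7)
\textbf{Proof plan for Lemma \ref{lem:lowerbound}.} The plan is a one-variable calculus argument: locate the unique minimum of $y$ on $[0,\infty)$ and show that it stays positive for $D$ below a threshold, invoking continuity rather than trying to solve an explicit inequality for $D_{\mathrm{crit}}$.

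First I would differentiate, obtaining
\[y'(t)=\frac{\kappa}{\gamma+1}\bigl(1-(t+1)^{-\gamma-1}\bigr)-D,\qquad y''(t)=\kappa(t+1)^{-\gamma-2}>0.\]
Hence $y$ is strictly convex on $[0,\infty)$, and $y'$ increases from $y'(0)=-D$ to $y'(\infty)=\kappa/(\gamma+1)-D$. If $D\geq\kappa/(\gamma+1)$, then $y'\leq 0$ and $y$ is (strictly, for $D>\kappa/(\gamma+1)$) decreasing to $-\infty$, so the lemma can only hold in the regime $D<\kappa/(\gamma+1)$; I will take this as a restriction from the start. In this regime $y'$ has a unique zero
\[t_*(D)=(1-\beta)^{-1/(\gamma+1)}-1,\qquad \beta:=\frac{D(\gamma+1)}{\kappa}\in[0,1),\]
and by convexity $t_*(D)$ is the unique global minimizer of $y$.

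Next I would substitute $t=t_*(D)$ into the definition of $y$ and simplify. Using $(t_*+1)^{-\gamma-1}=1-\beta$ and hence $(t_*+1)^{-\gamma}=(1-\beta)^{\gamma/(\gamma+1)}$, a short computation yields
\[m(D):=y(t_*(D))=v_0+\frac{\kappa}{\gamma}(1-\beta)^{\gamma/(\gamma+1)}-\frac{\kappa(1-\beta)}{\gamma+1}-\frac{\kappa}{\gamma(\gamma+1)}.\]
At $\beta=0$ the three $\kappa$-terms telescope via the identity $\frac{1}{\gamma}-\frac{1}{\gamma+1}=\frac{1}{\gamma(\gamma+1)}$, giving $m(0)=v_0>0$. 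Since $m$ is continuous in $D$ on $[0,\kappa/(\gamma+1))$, I define
\[D_{\mathrm{crit}}(v_0):=\sup\bigl\{D\in[0,\kappa/(\gamma+1)):m(D')>0\text{ for all }D'\in[0,D)\bigr\},\]
which is strictly positive by continuity. For every $D<D_{\mathrm{crit}}$ one has $y(t)\geq m(D)>0$ on $[0,\infty)$, as required.

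The argument is almost entirely mechanical; the only points that require a little care are the algebraic collapse $m(0)=v_0$ and the verification that the unique critical point of $y$ is indeed a global minimum, both of which follow immediately from strict convexity. I would not attempt to extract a closed-form expression for $D_{\mathrm{crit}}$, since the lemma only asserts its positivity, and the continuity argument above is sufficient and robust.
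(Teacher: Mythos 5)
Your proposal is correct and follows essentially the same route as the paper: both locate the unique minimizer $t_*$ of the convex function $y$, evaluate $y(t_*)$ as a function of the parameter (your $m(D)$ is exactly the paper's $F(z)$ under $z=1-\beta$), and deduce positivity for small $D$ from $m(0)=v_0>0$. The only differences are cosmetic — the paper additionally observes that $F$ is monotone in $z$ to pin down $D_{crit}$ via the root of $F$, whereas you invoke continuity, and your side remark that $y$ decreases to $-\infty$ when $D=\kappa/(\gamma+1)$ is slightly inaccurate (the limit is the finite value $v_0-\kappa/(\gamma(\gamma+1))$) but harmless, since restricting to $D<\kappa/(\gamma+1)$ is a valid conservative choice.
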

\begin{proof}
  For $D\leq0$, the result is trivial as $y(t)\geq v_0>0$.
  On the other hand, if $D>\frac{\kappa}{\gamma+1}$, $y(t)\leq
  v_0-\left(D-\frac{\kappa}{\gamma+1}\right)t$ will reach zero in
  finite time, regardless of the choice of $v_0$. Hence, $D_{crit}\leq
  \frac{\kappa}{\gamma+1}$.

  Let us focus on $D\in(0, \frac{\kappa}{\gamma+1}]$. For simplified
  notations, let $z=1-\frac{\gamma+1}{\kappa}D\in[0,1)$.
  The minimum of $y$ is attained at
  $t_*=z^{-\frac{1}{\gamma+1}}-1$.
  We calculate
  \begin{align*}
    y_{\min}=y(t_*)=&\,v_0+\frac{\kappa}{\gamma+1}z^{\frac{\gamma}{\gamma+1}}
    -\frac{\kappa}{\gamma+1}z-\frac{\kappa}{\gamma(\gamma+1)}
                                  \left(1-z^{\frac{\gamma}{\gamma+1}}\right)\\
    =&\,v_0+\frac{\kappa}{\gamma}z^{\frac{\gamma}{\gamma+1}}
       -\frac{\kappa}{\gamma+1}\left(z+\frac{1}{\gamma}\right)=:F(z).
  \end{align*}
  We can view the minimum as a function of $z$.
  Observe that $F$ is an increasing function in $[0,1]$, $F(1)=v_0>0$,
  and $F(0)=v_0-\frac{\kappa}{\gamma(\gamma+1)}$. Therefore, we have
  \begin{itemize}
    \item If $v_0<\frac{\kappa}{\gamma(\gamma+1)}$, $F$ has a unique
      root $z_*\in(0,1)$, and $F(z)>0$ for all $z>z_*$. Therefore,
      if $D<D_{crit}:=\frac{\kappa}{\gamma+1}(1-z_*)$, then
      $y_{\min}>0$.
    \item If $v_0\geq \frac{\kappa}{\gamma(\gamma+1)}$, $F(z)>0$ for
      all $z\in(0,1)$. Hence, if
      $D<D_{crit}:=\frac{\kappa}{\gamma+1}$, $y_{\min}>0$.
  \end{itemize}
\end{proof}

\textit{Step 3: conclusion}. As a direct consequence of Lemma
\ref{lem:lowerbound}, we obtain a
subcritical condition $D<D_{crit}$, which can be conveniently
rewritten as
\begin{equation}\label{eq:EP_c0_sub}
w_0>\left[-D_{crit}(v_0)+C_s\left(\frac{v_0}{n-1}+\frac{C(q_0,s_0)}{n-2}\right)\right]\left(1-\frac{C_s}{n-2}\right)^{-1}=:-\sigma_+(v_0).
\end{equation}
This finishes the proof of Theorem \ref{thm:EP-prho-sub}.

\begin{remark}
  The constant $C_s$ can be small so that the right hand side of
  \eqref{eq:EP_c0_sub} is negative. Indeed, in the case when
  $q_0>\frac{2}{n}$, we have $\hat{s}_{\max}=s_0$ and $\hat{t}_*=0$ in
  \eqref{eq:stimproved}. Then, $C_s=s_0e^{\frac{n(n-2)}{2}}$ is small
  as long as $s_0$ is small.
  For general case, particularly $q_0<0$, a similar argument works for
  the dynamics starting at time $t=t_*$. Since $t_*$ is finite, it is
  easy to control $v(t)$ for $t<t_*$. We omit the technical details
  here for simplicity.
\end{remark}

For $n=2$, due to its criticality, the
calculation would be slightly different. Thanks to the logarithmic
improvement in \eqref{eq:sasymp}, we are able to obtain a similar
result. We shall only sketch the proof, highlighting the
difference.

First, $w(t)$ is not bounded by a constant, but could have a
logarithmic growth.
\begin{align*}
  w(t)\leq &~w_0+2\kappa\left(\frac{C_s}{s_0}\right)^{\frac12}\Big(\ln(t+1)+1\Big)^{\frac12},\\
  v(t) \leq &~v_0+w_0t+ 2\kappa\left(\frac{C_s}{s_0}\right)^{\frac12}t\Big(\ln(t+1)+1\Big)^{\frac12}.
\end{align*}
Next, for the lower bound, since the estimates above do not imply
boundedness of $\int_0^ts(\tau)v(\tau)\,d\tau$, the previous estimates
for $n\geq3$ does not follow. Instead, we write
\[w'=\kappa\left(\frac{s}{s_0}\right)^{\frac12}-\kappa sv=\kappa
  s^{\frac12}\left(s_0^{-\frac12}-s^{\frac12}v\right),\]
and the term $s^{\frac12}v$ is bounded and
\begin{align*}
 s^{\frac12}v\leq&~ C_s^{\frac12}(t+1)^{-1}\Big(\ln(t+1)+1\Big)^{-\frac12}
 \left(v_0+w_0t+2\kappa\left(\frac{C_s}{s_0}\right)^{\frac12}t\Big(\ln(t+1)+1\Big)^{\frac12}\right)\\
 \rightarrow&~2\kappa C_ss_0^{-\frac12},\quad\text{as}\quad t\to+\infty.
\end{align*}
Therefore, if we choose a small $C_s$ such that $2\kappa C_s<1$, then
$w'$ will eventually become positive. One can continue with a similar
argument as in the $n\geq3$ case to obtain a threshold condition in
Theorem \ref{thm:EP-prho-sub}. The technical details will be omitted.

\subsection{Multi-dimensional case with positive constant background}
Now, we study the Euler-Poisson equations with constant background $c>0$. It is
known that the behavior of the solution is very different from the
zero background case. We will start with analyzing the $(q,s)$ pair in
the phase plane.

\subsubsection{Uniform boundedness of $(q,s)$}
Recall the $(q,s)$ dynamics for the case $c>0$
\begin{equation}\label{eq:qsc}
  \begin{cases}
    q'=-q^2+\kappa s,\\
    s'=-(ns+c)q.
  \end{cases}
\end{equation}

Compared with the zero background case, the main difference is that,
since $s$ can be negative, $q(t)\geq0$ is no longer an invariant
region. So Lemma~\ref{lem:qsplus} does not apply.
In the phase plane of $(q,s)$, The steady state $(0,0)$ is not
an attractor. As illustrated in Figure~\ref{fig:EP_sq_cp}, the
trajectories of $(q,s)$, if bounded, form periodic orbits, and do not converge as
the time approaches infinity.

\begin{figure}[ht]
  \includegraphics[width=.7\linewidth]{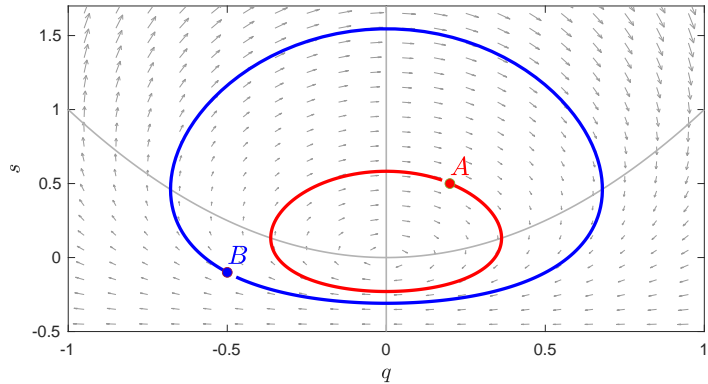}
  \caption{Illustration of the the phase plane of $(q,s)$ with $c=1$ and $N=2$.
    For any initial data (both $A$ and $B$ as examples), the solution
    are bounded uniformly in
    time. The trajectories form close orbits around $(0,0)$ that are symmetric in
    $s$-axis. The solutions are periodic in time.}\label{fig:EP_sq_cp}
\end{figure}

\begin{theorem}[Boundedness of $(q,s)$]\label{thm:EPb2}
  Let $n\geq2$. Consider the $(q,s)$ dynamics in \eqref{eq:qsc} with bounded
  initial conditions $(q_0,s_0)$ such that $s_0>-\frac{c}{n}$.
  Then, $(q(t), s(t))$ remains bounded in all time.
  Moreover,
  the trajectory of $(q(t),s(t))$ stays on a bounded periodic orbit
  in the $(q,s)$-plane. 
\end{theorem}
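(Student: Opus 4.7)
The strategy is to exhibit a conserved quantity $H(q,s)$ for the planar autonomous system \eqref{eq:qsc}, and to show that its level sets in the half-plane $\Omega:=\{(q,s):s>-c/n\}$ are compact closed curves encircling the unique equilibrium $(0,0)$. Since any trajectory of \eqref{eq:qsc} lies on such a level set and the vector field is non-vanishing away from the equilibrium, both the uniform-in-time boundedness and the periodicity will follow at once.

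I would first construct $H$ via the ansatz $H(q,s)=A(s)\,q^2+B(s)$. Differentiating along trajectories and collecting powers of $q$ yields
\begin{equation}
H'=-q^3\bigl[(ns+c)A'(s)+2A(s)\bigr]+q\bigl[2\kappa s\,A(s)-(ns+c)B'(s)\bigr],
\end{equation}
so that $H$ is conserved provided
\begin{equation}
(ns+c)A'(s)+2A(s)=0,\qquad (ns+c)B'(s)=2\kappa s\,A(s).
\end{equation}
The first equation integrates to $A(s)=(ns+c)^{-2/n}$, which is smooth and strictly positive on $\Omega$. The second then determines $B$ by quadrature; for $n\neq 2$ one finds
\begin{equation}
B(s)=\frac{2\kappa}{n(n-2)}(ns+c)^{(n-2)/n}+\frac{\kappa c}{n}(ns+c)^{-2/n},
\end{equation}
with a logarithmic counterpart when $n=2$.

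The remaining step is to read off three properties of $H$ on $\Omega$. (i) The only critical point is $(0,0)$: $\pa_q H=2A(s)q$ forces $q=0$, and then $\pa_s H=B'(s)=2\kappa s(ns+c)^{-2/n-1}$ forces $s=0$; the Hessian at the origin is diagonal with strictly positive entries, so the origin is a strict local minimum. (ii) $H$ is proper on $\Omega$: the term $A(s)q^2$ drives $H\to\infty$ as $|q|\to\infty$, the summand $(\kappa c/n)(ns+c)^{-2/n}$ in $B$ drives $H\to\infty$ as $s\to -c/n^+$, and the remaining summand (or a logarithm when $n=2$) does the same as $s\to +\infty$. (iii) Since $\Omega$ is simply connected and $H$ has a unique critical point which is a strict minimum, a standard Morse-theoretic argument shows that for every $h>H(0,0)$ the sublevel set $\{H\leq h\}$ is a topological disk, so $\{H=h\}$ is an embedded closed curve encircling the origin. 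A trajectory of \eqref{eq:qsc} starting at $(q_0,s_0)\in\Omega$ therefore lies on one such curve and is thus bounded in all time; since the vector field has no zero on this curve, the trajectory traverses the whole curve in finite time and the motion is periodic. The reflection symmetry about the $s$-axis visible in Figure~\ref{fig:EP_sq_cp} follows from the involution $(q,s,t)\mapsto(-q,s,-t)$, which leaves \eqref{eq:qsc} invariant.

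The main technical nuisance I anticipate lies in the critical case $n=2$, where both summands of $B$ must be replaced by logarithmic expressions and the properness as $s\to -c/n^+$ and as $s\to +\infty$ has to be re-verified from the explicit formula; no new ideas are needed. The only other subtlety is ensuring that each level set $\{H=h\}$ is a \emph{single} closed curve rather than several components, but this is immediate from the uniqueness of the critical point on the simply connected domain $\Omega$.
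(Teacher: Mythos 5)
Your proposal is correct, and it takes a genuinely different route from the paper. The paper proves boundedness by contradiction: after shifting to $\tilde s=s+\tfrac{c}{n}$ it assumes a first time of unboundedness, splits into the three scenarios $q\to+\infty$, $q\to-\infty$, $\tilde s\to+\infty$, and rules each out (the $q\to-\infty$ case by recycling the argument of Theorem~\ref{thm:EPb1}); periodicity is then inferred somewhat informally from the reflection symmetry of the phase portrait. You instead exhibit an explicit first integral $H(q,s)=(ns+c)^{-2/n}q^2+B(s)$ — your ODEs for $A$ and $B$ and the resulting formulas check out — and deduce both boundedness and periodicity at once from the compactness of the regular level curves in $\{s>-c/n\}$. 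Your approach is cleaner and makes the ``bounded periodic orbit'' part of the statement fully rigorous, which the paper's symmetry remark does not quite do; the paper's comparison-type argument is more robust in that it does not rely on integrability of the planar system and reuses machinery already developed for the zero-background case. Two small points to make explicit: the properness of $H$ as $s\to-\tfrac{c}{n}^{+}$ rests on the term $\tfrac{\kappa c}{n}(ns+c)^{-2/n}$ (respectively its logarithmic analogue when $n=2$) and hence on $c>0$, which is the standing assumption of this subsection but should be stated, since for $c=0$ the conclusion (periodic orbits) is false and trajectories instead converge to the origin; and you should note that conservation of $H$ together with the compactness of the level component inside $\Omega$ is what guarantees the trajectory never reaches the boundary $s=-c/n$, so the solution is global and stays in $\Omega$.
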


\def\st{\tilde{s}}

\begin{proof}
Let us perform the following convenient transformation
\[\st=s+\frac{c}{n}.\]
The dynamics of $(q,\tilde{s})$ reads
\begin{equation}\label{eq:qst}
  \begin{cases}
    q'=-q^2+\kappa \st-\frac{\kappa c}{n},\\
    \st'=-n\st q,
  \end{cases}
\end{equation}
and we are only interested in the case when $\st_0>0$, which
clearly preserves in time.

  We first express $\st$ in terms of $q$ as
  \[\st(t)=\st_0\exp\left[-n\int_0^tq(\tau)d\tau\right].\]
Immediately, we obtain a lower bound $\st(t)>0$ as long as $q$ stays
bounded.

Assume by contradiction, there exist a first time $T_*$ such that solution
becomes unbounded. $T_*$ can be either finite (corresponding to finite time
blowup), or infinity. Then, at least one of the three scenarios
happen: 
\[\lim_{t\to T_*-}q(t)=+\infty,\quad
  \lim_{t\to T_*-}q(t)=-\infty,\quad\text{or}\quad
  \lim_{t\to T_*-}\st(t)=+\infty.\]
We will show all three scenarios leads to contradictions.

First, if $\lim_{t\to T_*-}q(t)=+\infty$, there must exists time $t_0\in[0,T_*)$
such that $q(t)>0$ for every $t\in[t_0,T_*)$.
Then, $\st'(t)<0$ and hence $\st(t)\leq\st(t_0)$ for every $t\in[t_0,T_*)$.
On the other hand, from the dynamics of $q$, we have
\[q'(t)\leq-q^2(t)+\kappa\st(t_0)<0,\quad
\text{if}\quad
q(t)\geq\sqrt{\kappa\st(t_0)},\quad\forall~t\in[t_0,T_*).\]
This implies that $q(t)\leq\max\{q(t_0),\,\sqrt{\kappa\st(t_0)}\}$,
  which leads to a contradiction.

Second, if $\lim_{t\to T_*-}q(t)=-\infty$, there must exists a time $t_0\in[0,T_*)$
such that $q(t)<-\sqrt\frac{\kappa c}{n}$ for every $t\in[t_0,T_*)$.
A rough estimate on $q$ would read
\[q'\geq-q^2-\frac{\kappa c}{n}\geq-2q^2,\quad\forall t\in[t_0,T_*),\]
The rest of the proof will be identical to Theorem~\ref{thm:EPb1},
with only changes on the constant coefficients, as well as a shift of
time variable by $t_0$. The result shows that
$q(t)$ has a lower bound in all time, which clearly leads to a
contradiction.

Third, $\lim_{t\to T_*-}\st(t)=+\infty$, there must exists a time
$t_0\in[0,T_*)$ such that $\st(t)>\kappa^{-1}(Q^2+\frac{\kappa c}{n}
+1)$ for every $t\in[t_0,T_*)$, where
$Q=\sup_{t\in[0,T_*)}|q(t)|$ which is finite. Then,
\[q'(t)=-q(t)^2+\kappa\st(t)-\frac{\kappa c}{n}\geq
  -Q^2+\kappa\st(t)-\frac{\kappa c}{n}>1,\quad\forall~t\in[t_o,T_*).\]
Then, $q$ has a better lower bound
\[q(t)\geq -Q+(t-t_0)\geq\begin{cases}-Q&t_0\leq
    t<t_0+Q\\0&t>t_0+Q\end{cases}\]
and therefore
\[\st(t)=\st(t_0)\exp\left[-n\int_{t_0}^tq(\tau)d\tau\right]\leq
\st(t_0)e^{nQ^2},\quad\forall~t\in[t_0,T_*).\]
This leads to a contradiction.

Finally, the trajectory of the dynamics is symmetric in $q$ (by simple
observations from \eqref{eq:qst}). Therefore, the solution is periodic
in time, and travels along a closed orbit in the $(q,\st)$-phase plane.
\end{proof}

\subsubsection{Subcritical regions}
The dynamics of $(p,\rho)$ in \eqref{eq:EPfull} reads
\begin{equation}\label{eq:prho}
  \begin{cases}
    p'=-p^2+\kappa(\rho-c-(n-1)s),\\
    \rho'=-\rho(p+(n-1)q).
  \end{cases}
\end{equation}
We introduce the same variables $(w,v)$ in \eqref{eq:wvdef},
with $A(t)$ defined as
\[A(t):=-\int_0^tq(\tau)\,d\tau=\frac{1}{n}\ln\frac{\st(t)}{\st_0}.\]
The dynamics of $(w,v)$ has the form
\begin{equation}\label{eq:wvc}
  \begin{cases}
    \displaystyle w'=\,\kappa e^{(n-1)A}-\kappa\big(c+(n-1)s\big)v,\\
    v'=\,w.
  \end{cases}
\end{equation}

If $n=1$, the dynamics \eqref{eq:wvc} is simply a closed linear system
\begin{equation}\label{eq:1Dwv}
  w' = \kappa(1-cv),\quad v'=w,
\end{equation}
which can be solved explicitly.
The trajectory of the solution $(w,v)$ in the phase plane form a ellipse
\[\frac{w^2}{c\kappa}+\left(v-\frac1c\right)^2=R^2,\]
where $R$ is determined by the initial condition $(w_0,v_0)$.
$v(t)>0$ is then equivalent to $R<\frac1c$, or 
$w_0^2<\kappa(2v_0-cv_0^2)$. This leads to the sharp threshold
condition in \eqref{eq:Sigma1D}.

However, with the effect of the spectral gap, it is very difficult to
extend the 1D result in the multiple dimensions.
Unlike the zero background case where the solution intends to converge
to the equilibrium, the comparison principle fails as the solution
oscillates. Moreover, the period for $(q,s)$ does not necessarily
match with the period in \eqref{eq:1Dwv}, leading to a more chaotic
dynamics. Hence, an explicit expression of the subcritical regions,
like Theorem \ref{thm:EP-prho-sub}, remains to be a challenging
open problem.

\section{Application to the Euler-alignment equations}\label{sec:EA}
In this section, we discuss the Euler-alignment equations
\begin{align*}
  &\pa_t\rho+\div(\rho\u)=0,\\
  &\pa_t\u+(\u\cdot\grad)\u=\int_{\R^n}\phi(|\x-\y|)(\u(\y)-\u(\x))\rho(\y)d\y.
\end{align*}
The system arises as the macroscopic representation of the
Cucker-Smale flocking dynamics, describing the emergent phenomenon of
animal flocks.

The nonlocal alignment force is modeled through an \emph{influence
  function} $\phi$. Here, we assume $\phi$ is bounded, Lipschitz,
non-increasing, and decays slowly at infinity
\begin{equation}\label{EA:decay}
  \int^\infty\phi(r)\,dr=\infty.
\end{equation}

We state the local wellposedness theorem for the Euler-alignment
equations, the same as Theorem \ref{thm:EPlocal}.
The proof can be found in, for instance, \cite{tadmor2014critical,tan2020euler}.
\begin{theorem}[Local wellposedness]\label{thm:EAlocal}
  Consider the Euler-alignment equations with initial data
  $\rho_0\in H^s(\R^n)$ and
  $\u_0\in H^{s+1}(\R^n) ^n$, for $s>\frac{n}{2}$.
  Then, there exists a time $T>0$ such that
  the solution
  \begin{equation}\label{eq:EAregularity}
    (\rho,\u)\in C([0,T],  H^{s}(\R^n))\times
    C([0,T], H^{s+1}(\R^n))^n.
  \end{equation}
  Moreover, the life span $T$ can be extended as long as
  \begin{equation}\label{eq:EABKM}
    \int_0^T\|\grad\u(\cdot,t)\|_{L^\infty}\,dt<+\infty.
  \end{equation}
\end{theorem}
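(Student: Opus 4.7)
The plan is to follow the classical energy-method approach for local wellposedness of quasilinear hyperbolic systems with nonlocal source terms, in the same spirit as the Euler-Poisson case in Theorem~\ref{thm:EPlocal}. The added structure from the bounded Lipschitz alignment kernel makes things easier than in the Euler-Poisson case, since $\F$ is a bounded bilinear operator on Sobolev spaces rather than a Riesz-type transform.

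First I would set up an iteration scheme. Given smooth iterates $(\rho^k, \u^k)$ with $\rho^k$ compactly supported, define $(\rho^{k+1}, \u^{k+1})$ as the solution of the \emph{linear} transport system
\begin{align*}
\pa_t \rho^{k+1} + \div(\rho^{k+1} \u^k) &= 0, \\
\pa_t \u^{k+1} + (\u^k \cdot \grad) \u^{k+1} &= \F(\rho^k, \u^k),
\end{align*}
with $\F$ as in \eqref{eq:EAforce}. Standard linear transport theory places the iterates in the desired regularity class; the main task is to show $(\rho^k, \u^k)$ is Cauchy in $H^s \times H^{s+1}$ on a short time interval $[0,T]$, with $T$ depending only on the size of the initial data.

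The key step is the energy estimate. For $|\alpha|\le s$ (resp.\ $|\alpha|\le s+1$) apply $\pa^\alpha$ to the density (resp.\ velocity) equation, pair with the corresponding derivative of the unknown, and integrate by parts. The transport commutators $[\pa^\alpha,\u^k\cdot\grad]$ are controlled by Kato-Ponce / Moser-type inequalities in terms of $\|\grad\u^k\|_{L^\infty}$ and the Sobolev norms themselves. For the nonlocal force, I would exploit the commutator structure
\[\F=\phi\ast(\rho\u)-\u\,(\phi\ast\rho),\]
so that differentiating $\F$ shifts derivatives onto $\rho\u$ or $\rho$, and $\pa_{\x}\phi(|\x-\y|)$ is bounded by $\|\phi'\|_{L^\infty}$. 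Together with the compact support of $\rho^k$ (hence $\rho^k\in L^1$), this yields an estimate of the shape
\[\|\F(\rho^k,\u^k)\|_{H^{s+1}}\lesssim \big(1+\|\rho^k\|_{H^s}\big)\|\u^k\|_{H^{s+1}},\]
with constants depending on $\|\phi\|_{W^{1,\infty}}$ and $\|\rho^k\|_{L^1}$. Gronwall's inequality then yields a uniform bound on the iterates, and contraction in a lower norm (e.g.\ $L^2\times H^1$) gives existence and uniqueness.

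For the continuation criterion \eqref{eq:EABKM}, I would revisit the same energy estimate to establish an inequality of the form
\[\frac{d}{dt}\Big(\|\rho\|_{H^s}^2+\|\u\|_{H^{s+1}}^2\Big)\lesssim \big(1+\|\grad\u\|_{L^\infty}\big)\Big(\|\rho\|_{H^s}^2+\|\u\|_{H^{s+1}}^2\Big),\]
after eliminating the dependence on $\|\rho\|_{L^\infty}$ via the transport identity $\|\rho(\cdot,t)\|_{L^\infty}\le\|\rho_0\|_{L^\infty}\exp\big(\int_0^t\|\grad\u(\cdot,\tau)\|_{L^\infty}\,d\tau\big)$; integrating then yields the criterion. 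The main technical obstacle is the careful derivative counting in the nonlocal force: since $\phi$ is only Lipschitz, convolution gains no smoothness, and one must use the bilinear commutator structure and the compact support of $\rho$ to avoid losing a derivative at the top level. Once this is in place, the remainder of the argument is the standard hyperbolic template.
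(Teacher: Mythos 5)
The paper does not actually prove this theorem; it simply cites \cite{tadmor2014critical,tan2020euler}, where local wellposedness is established by precisely the kind of argument you outline (linearized iteration, Kato--Ponce/Moser commutator estimates for the transport part, and exploiting the bilinear convolution structure $\F=\phi\ast(\rho\u)-\u\,(\phi\ast\rho)$ together with $\rho\in L^1$ to control the force at top order). Your proposal is therefore essentially the standard proof from the cited references, and the outline is sound.
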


The slow-decay condition \eqref{EA:decay} is known to ensure the asymptotic flocking
behavior.
\begin{theorem}[Strong solution must flock \cite{tadmor2014critical}]\label{thm:flock}
  Let $(\rho, \u)$ be a strong solution of the Euler-alignment system,
  with compactly supported initial density $\rho_0$, and the influence
  function satisfies $\phi$ the condition \eqref{EA:decay}. Then, the
  solution must flock, namely, there exists a constant $D$, depending
  on the initial data, such that
  \begin{equation}\label{eq:flocking}
    \text{supp}(\rho(\cdot,t))\subset B_D(0),\quad\forall~t\geq0,
  \end{equation}
  where $B_D(0)$ is the ball in $\R^n$ that is centered at origin with
  radius $D$.
  Moreover, the solution exhibits fast alignment,
  \begin{equation}\label{eq:fastalign}
    V(t) \leq V_0~e^{-\nu t},\quad V(t):=\sup_{x,y}|u(x,t)-u(y,t)|.
  \end{equation}
  with an exponential rate of decay
  \begin{equation}\label{eq:nu}
    \nu=\phi(2D)\|\rho_0\|_{L^1}>0.
  \end{equation}
\end{theorem}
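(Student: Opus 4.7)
The plan is to run the classical Ha–Liu / Ha–Tadmor Lyapunov argument adapted to the Euler-alignment PDE, tracking two scalar quantities along the flow: the spatial diameter $D(t):=\mathrm{diam}(\mathrm{supp}\,\rho(\cdot,t))$ and the velocity diameter $V(t)$.

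First, I would show that $\mathrm{supp}\,\rho$ propagates along characteristics $\dot{\x}(t)=\u(\x(t),t)$. Taking two characteristics $\x(t),\y(t)$ starting inside $\mathrm{supp}\,\rho_0$, one computes
\[
\frac{d}{dt}|\x(t)-\y(t)|\leq |\u(\x(t),t)-\u(\y(t),t)|\leq V(t),
\]
so $D(t)$ is Lipschitz in $t$ with $\dot{D}(t)\leq V(t)$ in the a.e.\ sense. Next, for the velocity spread, I would differentiate $\u(\x(t),t)-\u(\y(t),t)$ along characteristics and use \eqref{eq:EAforce} together with mass conservation $\|\rho(\cdot,t)\|_{L^1}=\|\rho_0\|_{L^1}$. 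Writing the two alignment integrals under a common measure and exploiting the monotonicity of $\phi$, the worst case bound is
\[
\frac{d}{dt}|\u(\x(t),t)-\u(\y(t),t)|\leq -\phi(D(t))\|\rho_0\|_{L^1}\,|\u(\x(t),t)-\u(\y(t),t)|,
\]
from which, after taking the supremum over admissible pairs, one obtains $\dot{V}(t)\leq -\phi(D(t))\|\rho_0\|_{L^1}\,V(t)$.

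With these two differential inequalities, I would introduce the Lyapunov functional
\[
\mathcal{L}(t):=V(t)+\|\rho_0\|_{L^1}\int_0^{D(t)}\phi(r)\,dr,
\]
and observe that, since $\phi\geq0$ is non-increasing,
\[
\dot{\mathcal{L}}(t)\leq -\phi(D(t))\|\rho_0\|_{L^1}V(t)+\|\rho_0\|_{L^1}\phi(D(t))\dot{D}(t)\leq 0.
\]
Hence $\mathcal{L}(t)\leq \mathcal{L}(0)$, so
\[
\|\rho_0\|_{L^1}\int_{D(0)}^{D(t)}\phi(r)\,dr\leq V_0.
\]
Invoking the slow-decay hypothesis $\int^{\infty}\phi(r)\,dr=\infty$ then forces $D(t)$ to stay bounded by some constant $D_{\max}$ depending only on $V_0$, $D(0)$, $\|\rho_0\|_{L^1}$ and $\phi$. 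Choosing $D$ so that $B_D(0)$ contains the initial support and has radius at least $D_{\max}/2+$ const (using conservation of the mean position and momentum to keep the support near the origin) gives \eqref{eq:flocking}. Feeding this uniform bound back into $\dot{V}\leq -\phi(2D)\|\rho_0\|_{L^1}V$ yields the exponential decay \eqref{eq:fastalign} with rate $\nu=\phi(2D)\|\rho_0\|_{L^1}$ as in \eqref{eq:nu}.

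The main technical obstacle is twofold. First, $V(t)$ and $D(t)$ are suprema, so one must justify differentiating them; I would handle this by proving the two differential inequalities pointwise along every characteristic pair and then invoking a standard Danskin/Rademacher-type argument to pass to the envelope, or equivalently work with the upper Dini derivatives. Second, controlling the location (not just the size) of $\mathrm{supp}\,\rho$ to fit it inside a fixed ball $B_D(0)$ rather than a ball of drifting center requires using the conserved total momentum $\int\rho\u\,d\x$; under the radially symmetric ansatz \eqref{eq:radial} this momentum vanishes by symmetry, so the support stays centered at the origin and the ball $B_D(0)$ statement follows cleanly.
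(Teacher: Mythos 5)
The paper offers no proof of Theorem \ref{thm:flock}; it is quoted verbatim from \cite{tadmor2014critical}, and your proposal reconstructs exactly the Lyapunov-functional argument of that reference: the pair of differential inequalities $\dot D\le V$ and $\dot V\le -\phi(D)\|\rho_0\|_{L^1}V$ (the latter obtained at the maximizing pair, where the sign conditions $\langle \u(\z)-\u(\x),e\rangle\le 0$, $\langle \u(\z)-\u(\y),e\rangle\ge 0$ let you replace $\phi(\cdot)$ by $\phi(D)$), followed by monotonicity of $\mathcal{L}=V+\|\rho_0\|_{L^1}\int_0^{D}\phi$ and the divergence of $\int^\infty\phi$. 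This is correct, including your two flagged technical points: the Dini-derivative treatment of the suprema, and the observation that under the radial ansatz the momentum vanishes so a radially symmetric support of diameter $\le D_{\max}$ sits inside the fixed ball $B_{D_{\max}/2}(0)$, which justifies the ball-centered-at-origin form of \eqref{eq:flocking} and the rate $\nu=\phi(2D)\|\rho_0\|_{L^1}$.
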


In the following, we focus on the radially symmetric setup
\eqref{eq:radial}. The starting point is to verify that the force $\F$
has the form \eqref{eq:forceradial}, so radial symmetry preserves in time.
Express the nonlocal alignment force as
\[\F=\int_{\R^n}\phi(|\x-\y|)(\u(\y,t)-\u(\x,t))\rho(\y,t)d\y=
  \mathcal{L}(\rho\u)-\u\mathcal{L}\rho,\]
where
\[\mathcal{L}f(\x):=\int_{\R^n}\phi(|\x-\y|)f(\y)d\y.\]

Under radial symmetry \eqref{eq:radial}, it is easy to check that
$\mathcal{L}\rho$ is a radial function, as the convolution of radial
functions are radial. Let us denote
\begin{equation}\label{eq:psi}
  \psi(r)=\mathcal{L}\rho.
\end{equation}

The term $\mathcal{L}(\rho\u)$ can be expressed as follows.
\begin{proposition}\label{prop:zetaradial}
  The vector-valued function $\mathcal{L}(\rho\u)$ can be written as
  \[\frac{\x}{r}\zeta(r)=\mathcal{L}(\rho\u),\]
  where $\zeta$ is defined as
\begin{equation}\label{eq:zeta}
 \zeta(r)=\int_{\R^n}\phi(|r\e_1-\z|)\rho(|\z|)\frac{z_1}{|\z|}u(|\z|)d\z,
\end{equation}
 with $\e_1=[1,0,\cdots,0]^T$.
\end{proposition}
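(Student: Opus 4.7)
The plan is to exploit the rotational symmetry of the convolution operator $\mathcal{L}$ acting on the radially symmetric vector field $\rho\u = \rho(|\y|)\frac{\y}{|\y|}u(|\y|)$. Set $\mathbf{G}(\x) := \mathcal{L}(\rho\u)(\x)$. The first step is to establish the equivariance
\[
\mathbf{G}(R\x) = R\,\mathbf{G}(\x), \qquad \forall\, R \in SO(n).
\]
This follows from the substitution $\y = R\z$ in the defining integral, using that $|R\x - R\z| = |\x - \z|$, that $|R\z| = |\z|$ (so $\rho$ and $u$ are unchanged), and that the linear map $R$ pulls outside the integral.

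Next, I specialize to $\x = r\e_1$ and invoke an isotropy argument. For every $R \in SO(n)$ fixing $\e_1$, the equivariance gives $R\,\mathbf{G}(r\e_1) = \mathbf{G}(r\e_1)$. The only vectors in $\R^n$ fixed by the full stabilizer of $\e_1$ lie on the $\e_1$-axis. Hence $\mathbf{G}(r\e_1) = \zeta(r)\e_1$ for some scalar $\zeta(r)$, which is read off as the first component of $\mathbf{G}(r\e_1)$ and yields the formula \eqref{eq:zeta}.

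Finally, for a general $\x \neq 0$ with $r = |\x|$, choose any $R \in SO(n)$ with $R\e_1 = \x/r$. Equivariance gives
\[
\mathbf{G}(\x) = R\,\mathbf{G}(r\e_1) = \zeta(r)\,R\e_1 = \frac{\x}{r}\,\zeta(r),
\]
which is the claimed identity. There is no genuine analytic obstacle here: the argument is a pure symmetry computation, and the only technical point is the legitimacy of pulling $R$ through the integral in step one, which is immediate since $\phi$ is bounded and $\rho\u$ is integrable under the hypotheses of Theorem \ref{thm:EAlocal}.
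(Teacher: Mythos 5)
Your symmetry argument is sound in outline and is really the abstract version of what the paper does concretely: the paper substitutes $\z=U^T\y$ for a unitary $U$ with first column $\x/r$ and then kills the components $k\geq 2$ by oddness of the integrand in $z_k$, which is exactly your equivariance-plus-isotropy step in disguise. There is, however, one genuine gap as written: your isotropy step fails in dimension $n=2$. The stabilizer of $\e_1$ in $SO(2)$ is trivial (the only rotation fixing a nonzero vector in the plane is the identity), so "the only vectors fixed by the full stabilizer of $\e_1$ lie on the $\e_1$-axis" is false there, and the conclusion $\mathbf{G}(r\e_1)=\zeta(r)\e_1$ does not follow. Since $n=2$ is squarely within the paper's scope, this needs repair. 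The fix is easy: your change-of-variables computation in step one uses only $|\det R|=1$ and $|R\z|=|\z|$, so the equivariance $\mathbf{G}(R\x)=R\,\mathbf{G}(\x)$ holds for all $R\in O(n)$, not just $SO(n)$. The stabilizer of $\e_1$ in $O(n)$ contains the reflections $z_k\mapsto -z_k$ for $k\geq 2$, whose common fixed vectors are exactly the multiples of $\e_1$ in every dimension $n\geq 2$; invoking these reflections (which is precisely the paper's oddness argument) closes the gap. With that correction the rest of your argument, including the transport to general $\x$ via any $R$ with $R\e_1=\x/r$, is fine.
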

\begin{proof}
 Let $U$ be a unitary matrix in $\R^n$ such that its first column is
 $\x/r$, namely
 \[\x=rU{\bf e}_1.\]
 
Since the length $|\cdot|$ is invariant under unitary
transformation, we have
\[|\x-\y|=|U^T(\x-\y)|=|r\e_1-U^Ty|.\]
Then, we can compute
\begin{align*}
  \mathcal{L}(\rho\u)=&~                        
  \int_{\R^n}\phi(|r\e_1-U^T\y|)\rho(|\y|)\frac{\y}{|\y|}u(|\y|)d\y
  =  \int_{\R^n}\phi(|r\e_1-\z|)\rho(|\z|)\frac{U\z}{|\z|}u(|\z|)d\z\\
  =&~
     \sum_{k=1}^nU\e_k\int_{\R^n}\phi(|r\e_1-\z|)\rho(|\z|)\frac{z_k}{|\z|}u(|\z|)d\z\\
  =&~\frac{\x}{r}\int_{\R^n}\phi(|r\e_1-\z|)\rho(|\z|)\frac{z_1}{|\z|}u(|\z|)d\z.
\end{align*}
For the last equality, we use the fact that for $k\geq2$, the function
is odd with respect to $z_k$, and hence the integral is zero.
\end{proof}

Combining Proposition \ref{prop:zetaradial} and \eqref{eq:psi}, we
have verified \eqref{eq:forceradial} with $F=\zeta-\psi u$.
The dynamics of the radial profile $(\rho,u)$ reads
\[\begin{cases}
  \rho_t+(\rho u)_r=-\displaystyle (n-1)\frac{\rho u}{r},\\
  u_t+uu_r=\zeta-\psi u.
\end{cases}
\]

Let us write out the dynamics of the pair $(p,q)$ in \eqref{eq:pair} as follows
\[
  \begin{cases}
    p'=-p^2+\zeta_r-p\psi-u\psi_r,\\
    q'=-q^2+\displaystyle\frac{\zeta}{r}-q\psi,
  \end{cases}
\]
where again $'=\pa_t+u\pa_r$ denotes the material derivative.

To eliminate the nonlocal term $\zeta_r$, we follow the idea introduced in
\cite{carrillo2016critical}. Calculate the dynamics of $\psi$
\[\psi_t=\pa_t\mathcal{L}\rho=-\div\mathcal{L}(\rho\u)=-\zeta_r-(n-1)\frac{\zeta}{r}.\]
Then, adding the dynamics of $p$ and $\psi$ would yield
\begin{equation}\label{eq:GdynGen}
  (p+\psi)'=-p(p+\psi)-(n-1)\frac{\zeta}{r}.
\end{equation}

Let $G=p+\psi$. We summarize the dynamics on $(\rho, G)$
\begin{equation}\label{eq:EA-rhoG}
  \begin{cases}
   \rho_t+(\rho u)_r=- (n-1)\rho q,\\
    G_t+(Gu)_r=-(n-1)\displaystyle\frac{\zeta}{r}.
  \end{cases}
\end{equation}

\subsection{The one-dimensional case}
When $n=1$, the right hand side of \eqref{eq:EA-rhoG} vanishes.
In particular, $G$ satisfies the continuity equation $G_t+(Gu)_r=0$.
Therefore, $G\geq0$ is an invariant region. Further investigation
leads to a sharp threshold condition.
\begin{theorem}[1D sharp threshold \cite{carrillo2016critical}]\label{thm:EA-1D}
  Consider the Euler-alignment system in 1D.
  \begin{itemize}
  \item (Subcritical region) If $\inf G_0\geq0$, the solution is globally regular.
  \item (Supercritical region) If $\inf G_0<0$, there exists a finite time blowup.
  \end{itemize}
\end{theorem}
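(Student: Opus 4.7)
The plan is to reduce the PDE analysis to a scalar Riccati-type ODE along characteristics, exploiting the observation (stated just before the theorem) that in one dimension $G = p + \psi = u_r + \psi$ satisfies the conservative transport equation $G_t + (Gu)_r = 0$. Along a characteristic $r = X(t,\alpha)$ with $\dot X = u(X,t)$, expanding the continuity equation yields
\[
G' = -G\,u_r = -G(G - \psi) = -G^2 + G\psi.
\]
Since $\phi \geq 0$ and total mass is conserved, the nonlocal quantity $\psi = \phi * \rho$ is nonnegative and uniformly bounded by $\|\phi\|_{L^\infty}\|\rho_0\|_{L^1}$ for all time.

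For the subcritical case $\inf G_0 \geq 0$, I would first observe that the right-hand side of $G' = G(\psi - G)$ vanishes at $G = 0$, so the set $\{G \geq 0\}$ is invariant along every characteristic. The same identity then supplies an \emph{a priori} upper bound: whenever $G$ exceeds $\|\psi\|_{L^\infty}$ one has $G' < 0$, so $G$ is trapped below $\max\bigl(\|G_0\|_{L^\infty},\ \|\phi\|_{L^\infty}\|\rho_0\|_{L^1}\bigr)$ uniformly in $t$. Since $u_r = G - \psi$, this produces a uniform $L^\infty$ bound on $u_r$, and the continuation criterion \eqref{eq:EABKM} in Theorem~\ref{thm:EAlocal} upgrades this to global regularity.

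For the supercritical case, pick $\alpha_0$ with $G_0(\alpha_0) < 0$. Along the characteristic issued from $\alpha_0$, $G$ cannot cross the invariant barrier $\{G = 0\}$ from below, so $G < 0$ persists; hence $G\psi \leq 0$ and $G' \leq -G^2$. Setting $H = -G > 0$ gives the Riccati inequality $H' \geq H^2$, which forces $H(t) \to +\infty$ in time at most $1/|G_0(\alpha_0)|$. Consequently $u_r = G - \psi \to -\infty$ along this characteristic in finite time, violating \eqref{eq:EAregularity}.

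The main structural input is the commutator cancellation that reduces \eqref{eq:GdynGen} to a conservative law for $G$ in one space dimension; once that is in hand the argument is essentially elementary ODE theory along Lagrangian trajectories. The only technical point to check is that the characteristics remain well defined and the identity $G' = -G(G - \psi)$ is meaningful on the lifespan of the strong solution, both of which follow directly from Theorem~\ref{thm:EAlocal}. I do not anticipate any serious obstacle beyond this; the genuine difficulty of the multi-dimensional problem is precisely the spectral-gap term that is absent in one dimension.
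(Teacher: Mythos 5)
Your proposal is correct and follows essentially the same route as the paper (and the cited reference \cite{carrillo2016critical}): the conservative form $G_t+(Gu)_r=0$ gives sign invariance of $G$ along characteristics, the bound $0\leq\psi\leq\|\phi\|_{L^\infty}\|\rho_0\|_{L^1}$ turns $G'=-G^2+G\psi$ into a two-sided bound on $G$ and hence on $u_r=G-\psi$ in the subcritical case, and the Riccati inequality $G'\leq -G^2$ on the invariant set $\{G<0\}$ forces finite-time blowup in the supercritical case. No gaps.
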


\subsection{The effect of the spectral gap}
When $n\geq2$, extra terms appear in \eqref{eq:EA-rhoG}  involving
$q$ and $\zeta/r$,
which can not be locally expressed in terms of $(\rho, G)$ along a
characteristic path.
These two quantities encode the main difference between 1D and
multi-dimensions, and hence is related to the spectral gap effect.

Let us first focus on $\zeta/r$.

One way to eliminate the term $\zeta/r$ is to take a linear combination of
$G=p+\psi$ and $q$ as follows
\[(d+\psi)'=(p+\psi+(n-1)q)'=-p(p+\psi)-(n-1)q(q+\psi),\]
where $d=\div\u$.
However, this does not reduce the problem to the
one-dimensional case, as the right hand side of the dynamics is different from
$-d(d+\psi)$. One needs to control the spectral gap $\eta$ in
\eqref{eq:spectralgap}, which could be difficult.
This approach has been investigated in \cite{he2017global} only for $n=2$.

As we have argued throughout the paper, we shall study the pair
$(p,q)$ instead of $d$.
To this end, we obtain a bound on $\zeta/r$.

\begin{proposition}[Boundedness of $\zeta/r$]\label{prop:zeta}
  The quantity
  $\frac{\zeta(r,t)}{r}$ is uniformly bounded in
  $(r,t)\in\R_+\times\R_+$. Moreover, it decays exponentially in time,
  with the same rate as in \eqref{eq:fastalign}, thus, there exists a
  constant $C_0$, depending on the initial data, such that
  \begin{equation}\label{eq:zetabound}
    \sup_{r>0}\frac{|\zeta(r,t)|}{r}\leq B(t):=C_0e^{-\nu t}.
  \end{equation}
\end{proposition}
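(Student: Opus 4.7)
The plan is to extract a factor of $r$ from $\zeta(r,t)$ by exploiting a cancellation at $r=0$, and then to obtain the temporal decay by combining mass conservation with the flocking theorem together with the radial boundary condition $u(0,t)=0$.

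\textbf{Step 1: Vanishing at the origin.} Setting $r=0$ in the defining integral \eqref{eq:zeta}, the integrand becomes $\phi(|\z|)\rho(|\z|)\frac{z_1}{|\z|}u(|\z|)$, which is odd under reflection $z_1\mapsto -z_1$. Hence $\zeta(0,t)=0$. This matches the geometric expectation that $\frac{\x}{r}\zeta(r,t) = \mathcal{L}(\rho\u)(\x)$ must be continuous at the origin, which forces $\zeta(0,t)=0$ since the prefactor $\x/r$ is not.

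\textbf{Step 2: Lipschitz bound yields a factor of $r$.} Writing $\zeta(r,t)-\zeta(0,t)$ and bringing the difference inside,
\[
\zeta(r,t)=\int_{\R^n}\bigl[\phi(|r\e_1-\z|)-\phi(|\z|)\bigr]\rho(|\z|)\frac{z_1}{|\z|}u(|\z|)\,d\z.
\]
The Lipschitz assumption on $\phi$ combined with the reverse triangle inequality gives $|\phi(|r\e_1-\z|)-\phi(|\z|)|\leq \|\phi'\|_{L^\infty}\,r$, which is precisely the factor I need. Bounding $|z_1/|\z||\leq 1$ and dividing by $r$ then produces
\[
\frac{|\zeta(r,t)|}{r}\leq \|\phi'\|_{L^\infty}\,\|\rho(\cdot,t)\|_{L^1}\,\|u(\cdot,t)\|_{L^\infty}.
\]

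\textbf{Step 3: Temporal decay via flocking.} Mass conservation for the continuity equation in \eqref{eq:EA-rhoG} gives $\|\rho(\cdot,t)\|_{L^1}=\|\rho_0\|_{L^1}$. The key observation for the exponential factor is that the radial regularity condition $u(0,t)=0$ lets me write
\[
\|u(\cdot,t)\|_{L^\infty}=\sup_{r}|u(r,t)-u(0,t)|\leq V(t),
\]
and then Theorem \ref{thm:flock} gives $V(t)\leq V_0 e^{-\nu t}$ with $\nu=\phi(2D)\|\rho_0\|_{L^1}$. Bounding $V_0\leq 2\|u_0\|_{L^\infty}$ (or invoking the standard maximum principle $\|u(\cdot,t)\|_{L^\infty}\leq \|u_0\|_{L^\infty}$ together with the flocking decay) yields \eqref{eq:zetabound} with $C_0$ of the stated form, up to an absorbable numerical factor.

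I do not anticipate any serious obstacle. The argument is essentially a cancellation at the origin followed by Lipschitz continuity; the only point deserving care is the clean interaction between the radial boundary condition $u(0,t)=0$ and the flocking bound, which must be invoked to upgrade the uniform-in-time bound from Step 2 into the exponential decay in Step 3. Notably, the argument crucially uses that $\phi$ is Lipschitz; for a merely bounded $\phi$ one would only recover $|\zeta(r,t)|\leq C$ and not $|\zeta(r,t)/r|\leq C$, which would be insufficient for controlling the dynamics of $q$ near $r=0$.
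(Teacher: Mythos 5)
Your proposal is correct and follows essentially the same route as the paper: insert $-\phi(|\z|)$ into the integrand (justified by odd symmetry in $z_1$, which is your $\zeta(0,t)=0$ observation), use the Lipschitz bound on $\phi$ to extract the factor $r$, and then convert $\|u(\cdot,t)\|_{L^\infty}$ into exponential decay via Theorem \ref{thm:flock}. The only cosmetic difference is in the last step: the paper notes that radial symmetry gives the exact identity $V(t)=2\|u(\cdot,t)\|_{L^\infty}$ (comparing antipodal points), whereas you use $u(0,t)=0$ to get $\|u(\cdot,t)\|_{L^\infty}\leq V(t)$, which costs only the harmless factor of $2$ you already flagged.
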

\begin{proof}
  We estimate $\zeta$ from its definition \eqref{eq:zeta}.
\begin{align*}
  |\zeta(r,t)|=&\,\left|\int_{\R^n}(\phi(|r\e_1-\z|)-\phi(|\z|))\rho(|\z|,t)\frac{z_1}{|\z|}u(|\z|,t)\,d\z\right|\\
  \leq&\,\int_{\R^n}\|\phi'\|_{L^\infty}|r\e_1|\rho(|\z|,t)\left|\frac{z_1}{|\z|}\right|u(|\z|,t)\,d\z\\
  \leq&\,
        r\|\phi'\|_{L^\infty}\|\rho(\cdot,t)\|_{L^1}\|u(\cdot,t)\|_{L^\infty}
        \leq r\|\phi'\|_{L^\infty}\|\rho_0\|_{L^1}\|u_0\|_{L^\infty}e^{-\nu t}.
\end{align*}
For the first equality, odd symmetry in $z_1$ is used. For the last
inequality, the fast alignment estimate \eqref{eq:fastalign} is
applied. Note that due to the symmetry on $\u$, it is easy to check
that $V(t)=2\|u(\cdot,t)\|_{L^\infty}$.

This ends the proof of \eqref{eq:zetabound}, with $C_0=\|\phi'\|_{L^\infty}\|\rho_0\|_{L^1}\|u_0\|_{L^\infty}$.
\end{proof}

\begin{remark}
  While $\zeta/r$ is bounded and decay in time, it does not
  necessarily has a definite sign. Therefore, $G\geq0$ is no longer an
  invariant region, and we do not expect that the sharp threshold result in
  1D (Theorem \ref{thm:EA-1D}) remains true in multi-dimensions.
\end{remark}

Next, we work on $q$. Recall its dynamics
\begin{equation}\label{eq:EA-q}
  q'=-q^2+\frac\zeta r-q\psi.
\end{equation}
We have obtained the boundedness of $\zeta/r$ in Proposition
\ref{prop:zeta}. The boundedness of $\psi$ can also be derived as
follows.
\begin{proposition}[Boundedness of $\psi$]\label{prop:psi}
  $\psi$ is bounded above and below by
  \[0<\psim\leq\psi(r,t)\leq\psiM,\quad\forall~(r,t)\in
    [0,D]\times\R_+.\]
  where $\psim$ is defined in \eqref{eq:nu}, and
  $\psiM:=\|\phi\|_{L^\infty}\|\rho_0\|_{L^1}$.
\end{proposition}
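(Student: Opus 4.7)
The plan is to estimate $\psi(r,t)=\mathcal{L}\rho(r,t)$ directly from its integral representation $\int_{\R^n}\phi(|\x-\y|)\rho(|\y|,t)\,d\y$ for any $\x$ with $|\x|=r$, using two structural facts already available: conservation of mass for the continuity equation, which gives $\|\rho(\cdot,t)\|_{L^1}=\|\rho_0\|_{L^1}$ for all $t\geq 0$; and the flocking confinement from Theorem \ref{thm:flock}, which keeps $\mathrm{supp}(\rho(\cdot,t))\subset B_D(0)$ uniformly in time, with $D$ depending only on the initial data.

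For the upper bound, I would simply factor out $\|\phi\|_{L^\infty}$,
\[\psi(r,t)\leq \|\phi\|_{L^\infty}\int_{\R^n}\rho(|\y|,t)\,d\y=\|\phi\|_{L^\infty}\|\rho_0\|_{L^1}=\psiM,\]
which in fact holds for all $r\geq 0$, not merely $r\in[0,D]$.

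For the lower bound I would restrict to $r\in[0,D]$ and exploit the support constraint: the integrand vanishes outside $B_D(0)$, and for any $\y$ in that ball the triangle inequality yields $|\x-\y|\leq|\x|+|\y|\leq 2D$. Since $\phi$ is non-increasing on $\R_+$, on this region $\phi(|\x-\y|)\geq\phi(2D)$, so
\[\psi(r,t)\geq \phi(2D)\int_{B_D(0)}\rho(|\y|,t)\,d\y=\phi(2D)\|\rho_0\|_{L^1}=\psim.\]
The strict positivity $\psim>0$ comes for free from the slow-decay assumption \eqref{EA:decay} together with monotonicity: if $\phi(2D)=0$ then $\phi$ would vanish past $2D$, contradicting $\int^\infty\phi=\infty$.

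There is no substantive obstacle here — the proposition is essentially a one-line computation once confinement and mass conservation are invoked. The only conceptual point worth flagging is that the lower bound is conditional on the solution being a strong solution on $[0,t]$, so that Theorem \ref{thm:flock} actually applies; but since the flocking diameter $D$ is fixed by the initial data, both $\psim$ and $\psiM$ are genuine time-independent constants, which is exactly what will be needed to treat $\psi$ as a uniform damping coefficient in the Riccati-type equation \eqref{eq:EA-q} for $q$.
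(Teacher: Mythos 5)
Your proof is correct and matches the paper's argument essentially verbatim: the upper bound by factoring out $\|\phi\|_{L^\infty}$ with mass conservation, and the lower bound by combining the flocking support bound \eqref{eq:flocking}, the triangle inequality $|\x-\y|\leq 2D$, and the monotonicity of $\phi$ to get $\phi(2D)\|\rho_0\|_{L^1}=\psim$. Your added remarks on why $\phi(2D)>0$ and on the conditional use of Theorem \ref{thm:flock} are sensible but not needed beyond what the paper records.
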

\begin{proof}
  The upper bound can be simply obtained by
  \[\psi(r,t)=\int_{\R^n}\phi(r\e_1-y)\rho(y,t)\,dy\leq\|\phi\|_{L^\infty}\|\rho_0\|_{L^1}=:\psiM.\]
  For the lower bound, using the a priori bound on the support
  \eqref{eq:flocking}, the decreasing property of $\phi$, and the
  definition of $\nu$ in \eqref{eq:nu}, we get
  \[\psi(r,t)=\int_{|y|\leq D}\phi(r\e_1-y)\rho(y,t)\,dy\geq
    \phi(2D)\int_{|y|\leq D}\rho(y,t)\,dy=\nu.\]
\end{proof}

Now, we are ready to discuss threshold conditions on $q$. First, we
state a rough result, making use of the boundedness on $\zeta/r$ and $\psi$.

\begin{proposition}[Rough threshold conditions on
  $q$]\label{prop:EA-q-rough}~
  
  \begin{itemize}
    \item (Subcritical region) Let $C_0\leq\frac{\psim^2}{4}$. If
  $q_0\geq\frac12\left(-\psim-\sqrt{\psim^2-4C_0}\right)$, then $q(t)$ stays bounded
  in all time.
    \item (Supercritical region) If
      $q_0<\frac12\left(-\psiM-\sqrt{\psiM^2+4C_0}\right)$, then
      $q(t)\to -\infty$ in finite time.
  \end{itemize}
\end{proposition}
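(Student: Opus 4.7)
The plan is to reduce the scalar equation \eqref{eq:EA-q} to two one-sided Ricatti comparisons along each characteristic, using the uniform bounds $\psim \leq \psi \leq \psiM$ from Proposition~\ref{prop:psi} and $|\zeta/r| \leq B(t) \leq C_0$ from Proposition~\ref{prop:zeta}. The key observation is that the sign of $q$ dictates which endpoint of $[\psim,\psiM]$ yields the sharpest bound on the linear term $-q\psi$: when $q\leq 0$, $-q\psi$ is smallest if $\psi=\psim$ and largest if $\psi=\psiM$, and the two choices produce the two thresholds in the statement.

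For the subcritical part, whenever $q\leq 0$ the bounds give
\[q' \;\geq\; -q^2 - \psim\, q - C_0 \;=\; -(q-y_+)(q-y_-), \qquad y_\pm := \tfrac12\bigl(-\psim \pm \sqrt{\psim^2-4C_0}\bigr),\]
with both roots real (by $C_0 \leq \psim^2/4$) and negative. The right-hand side vanishes at $y_-$ and is nonnegative on $[y_-,y_+]$, so a Grönwall-style comparison with the autonomous ODE $y'=-(y-y_+)(y-y_-)$ (for which $y_0\geq y_-$ implies $y(t)\geq y_-$) gives $q(t)\geq y_-$ while $q(t)\leq 0$; in the complementary regime $q(t)>0$ the bound $q(t)\geq y_-$ is trivial. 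An upper bound follows from the opposite side: for $q\geq 0$, $q'\leq -q^2 + C_0$, so $q(t)\leq\max\{q_0,\sqrt{C_0}\}$. Combining, $q$ stays in a bounded interval for all time.

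For the supercritical part, the complementary estimate in the regime $q\leq 0$ reads
\[q' \;\leq\; -q^2 - \psiM\, q + C_0 \;=\; -(q-z_+)(q-z_-), \qquad z_\pm := \tfrac12\bigl(-\psiM \pm \sqrt{\psiM^2+4C_0}\bigr),\]
with $z_-<0<z_+$ (no smallness on $C_0$ needed). The hypothesis $q_0<z_-$ makes the right-hand side strictly negative at $t=0$, hence $q$ is decreasing; in particular $q$ stays below $z_-$ and the upper Ricatti bound persists throughout. Comparing with the scalar Ricatti ODE $y' = -y^2 - \psiM y + C_0$, $y(0)=q_0<z_-$, whose solution diverges to $-\infty$ in finite time by a direct integration, one gets $q(t)\leq y(t)$, so $q\to-\infty$ in finite time as well.

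The main subtlety is justifying the one-sided comparison where the inequality $q'\geq\cdots$ or $q'\leq\cdots$ only holds on the negativity region $\{q\leq 0\}$. In the subcritical case one must check that $q$ cannot slip across the barrier $y_-$ even at the degenerate instant where $q'(t_*)=0$ would require $\psi(r(t_*),t_*)=\psim$ and $\zeta(r(t_*),t_*)/r(t_*)=-C_0$ simultaneously; continuity of $q$ together with strict positivity of the comparison right-hand side on $(y_-,y_+)$ closes this gap. In the supercritical case one must confirm that $q(t)$ cannot escape $\{q<0\}$, which is automatic since $q'<0$ on $\{q<z_-\}$ keeps $q$ strictly decreasing throughout its life span. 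These verifications, while routine, constitute the main technical care required beyond the Ricatti phase-line analysis.
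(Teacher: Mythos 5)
Your proof is correct and follows essentially the same route as the paper: a comparison/barrier argument for the Ricatti-type inequality obtained by bounding $\psi\in[\psim,\psiM]$ and $|\zeta/r|\leq C_0$, with the sign of $q$ selecting the relevant endpoint. The only difference is that you also write out the supercritical half (via comparison with the blowing-up Ricatti solution), which the paper states but omits as analogous.
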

\begin{proof}
  The results follows from simple comparison principles. We will only
  show the subcritical region.

  The upper bound on $q$ is trivial. If $q(t)\geq\sqrt{C_0}$, then
  \[q'(t)\leq-q^2(t)+C_0-\psim~q(t)<0.\]
  This directly implies $q(t)\leq\max\{q_0, \sqrt{C_0}\}$.
  
  For the lower bound, we will show that $q(t)\geq
  \frac12\left(-\psim-\sqrt{\psim^2-4C_0}\right)$, by
  contradiction.
  Suppose $q$ does not have such lower bound. Then, there exists a
  time $t_0$ such that $q(t_0)=
  \frac12\left(-\psim-\sqrt{\psim^2-4C_0}\right)$
  and $q'(t_0)\leq0$. On the other hand, we compute
  \[q'(t_0)> -q^2(t_0)-C_0-\psim~q(t_0)=0.\]
  This leads to a contradiction.
\end{proof}

The thresholds conditions are not sharp, due to the lack of precise
control of the non-locality. However, in the special case when $\phi$
is a constant, we have $\zeta=0$ and $\psim=\psiM$. Then, Proposition
\ref{prop:EA-q-rough} becomes sharp.

The threshold conditions can be improved, if we take into account of
the fast decay property of $\zeta/r$.
The idea is to the dynamics as the following autonomous system
\begin{equation}\label{eq:EA-qV}
  \begin{cases}
    q'=-q^2-c_1q+c_2B,\quad c_1\in[\psim,\psiM],~~c_2\in[-1,1],\\
    \frac{d}{dt}B=-\nu B,
  \end{cases}\quad
  \begin{cases}
    q(0)=q_0,\\ B(0)=C_0.
  \end{cases}
\end{equation}
Then, perform a phase plane analysis on \eqref{eq:EA-qV} assuming
$c_1$ and $c_2$ are constant. Finally, establish a comparison
principle to obtain threshold conditions for \eqref{eq:EA-qV}.
Following directly from \cite[Theorem 5.1]{tadmor2014critical}, we
have the following enhanced threshold conditions.

\def\sqp{\sigma_q^+}
\def\sqm{\sigma_q^-}
\begin{proposition}[Enhanced threshold conditions on
    $q$]\label{prop:EA-q-enhanced}~
  
   \begin{itemize}
    \item (Subcritical region) There exists a function $\sqp:\R_+\to[-\psim,\infty)$,
      defined as
      \begin{equation}\label{eq:sqp}
\sqp(0)=-\psim,\quad \frac{d}{dx}\sqp(x)=
\begin{cases}
  \displaystyle\frac{1}{2\psim},&x\to0+\\
  \displaystyle\frac{-\sqp(x)^2-\psim \sqp(x)-x}{-\nu x}& \text{if}~\sqp(x)<0\\
 \displaystyle\frac{-\sqp(x)^2-\psiM \sqp(x)-x}{-\nu x}& \text{if}~\sqp(x)\geq0
\end{cases}
\end{equation}
      such that, if
      $q_0\geq\sqp(C_0)$, then $q(t)$ stays bounded  in all time.
    \item (Supercritical region) There exists a function $\sqp:\R_+\to[-\infty,-\psiM)$,
      defined as
\begin{equation}\label{eq:sqm}
\sqm(0)=-\psiM,\quad \frac{d}{dx}\sqm(x)=
\begin{cases}
\displaystyle-\frac{1}{\psiM+\nu},& x\to0+\\
\displaystyle\frac{-\sqm(x)^2-\psiM \sqm(x)+x}{-\nu x}
&x>0.
\end{cases}
\end{equation}
such that, if 
      $q_0<\sqm(C_0)$, then $q(t)\to -\infty$ in finite time.
  \end{itemize}
\end{proposition}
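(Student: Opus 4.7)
The plan is to reduce the forced Riccati dynamics \eqref{eq:EA-q} to a comparison with an autonomous planar system, then invoke the phase-plane analysis in \cite[Theorem 5.1]{tadmor2014critical} to extract the separatrices $\sigma_q^\pm$. Propositions \ref{prop:zeta} and \ref{prop:psi} give the pointwise bounds $|\zeta/r|\leq B(t):=C_0 e^{-\nu t}$ and $\psim\leq \psi\leq \psiM$. Writing $-q\psi$ in a sign-dependent way, \eqref{eq:EA-q} is sandwiched between two Riccati-type envelopes
\[
-q^2 - c_1^-(q)\,q - B(t) \;\leq\; q'(t) \;\leq\; -q^2 - c_1^+(q)\,q + B(t),
\]
with $c_1^-(q)=\psim$ for $q<0$ and $\psiM$ for $q\geq 0$, and $c_1^+$ obtained by swapping. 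Since $B$ satisfies $B'=-\nu B$ with $B(0)=C_0$, each envelope together with $B$ is a closed autonomous planar system of the form \eqref{eq:EA-qV}.

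For the subcritical half I would compare $q$ with $\tilde q$ solving the lower envelope. Standard ODE comparison gives $q(t)\geq \tilde q(t)$, so it suffices to control $\tilde q$ whenever $q_0\geq \sigma_q^+(C_0)$. In the $(B,\tilde q)$ phase plane, $B$ strictly decreases from $C_0$ to $0$, so trajectories can be parametrized as $\tilde q(x)$ with $x=B$, satisfying
\[
\frac{d\tilde q}{dx} \;=\; \frac{-\tilde q^{\,2} - c_1^-(\tilde q)\,\tilde q - x}{-\nu x},
\]
which is precisely the ODE in \eqref{eq:sqp}. At the degenerate point $x=0$ the frozen dynamics $\tilde q'=-\tilde q^{\,2}-c_1^-(\tilde q)\tilde q$ has an attracting fixed point at $\tilde q=0$ and a repelling one at $\tilde q=-\psim$; $\sigma_q^+$ is then the unique trajectory entering $(0,-\psim)$, and trajectories starting at or above it remain above and converge to $\tilde q=0$, yielding global boundedness of $q$. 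The prescribed initial slope at $x=0+$ comes from an explicit local expansion at the fixed point.

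The supercritical half is entirely parallel. Comparing $q$ with the upper envelope $\tilde q'=-\tilde q^{\,2}-c_1^+(\tilde q)\tilde q+B$, the same phase-plane analysis produces the separatrix $\sigma_q^-$ with $\sigma_q^-(0)=-\psiM$ (the repelling fixed point of the frozen dynamics on the branch $\tilde q<0$, where $c_1^+=\psiM$). Any trajectory starting below $\sigma_q^-$ escapes to $-\infty$ in finite time, and by comparison $q$ blows up as well.

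The main obstacle is the piecewise nature of $c_1^\pm$: the defining ODEs for $\sigma_q^\pm$ switch form at the sign change $\tilde q=0$, and one must verify that each separatrix is well-defined and continuous across the crossing and that the ODE comparison survives the discontinuity in the coefficient. These are precisely the technical points handled in \cite[Theorem 5.1]{tadmor2014critical} for the same family of autonomous systems, so the argument transports directly to our setting once the envelope bounds above are in place.
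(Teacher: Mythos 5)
Your proposal follows essentially the same route as the paper: bound $\zeta/r$ and $\psi$ via Propositions \ref{prop:zeta} and \ref{prop:psi}, sandwich the $q$-dynamics between members of the autonomous family \eqref{eq:EA-qV}, and extract the separatrices by the phase-plane comparison argument of \cite[Theorem 5.1]{tadmor2014critical}. You in fact supply more detail than the paper does (the explicit sign-dependent envelope coefficients and the parametrization of trajectories by $x=B$ that yields the ODEs \eqref{eq:sqp}--\eqref{eq:sqm}), and these details are consistent with the stated thresholds.
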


\begin{remark}
 The threshold functions $\sigma_{q+}$ and $\sigma_{q-}$ only depends
 on $\psim$ and $\psiM$. Figure \ref{fig:EA-q} shows an example of the
 thresholds, with $\psim=.8$ and $\psiM=1$.
 One can clearly see that the enhanced threshold conditions are much
 stronger than the rough conditions in Proposition
 \ref{prop:EA-q-rough}, particularly for the subcritical region.
\end{remark}

\begin{figure}[t]
  \includegraphics[width=.7\linewidth]{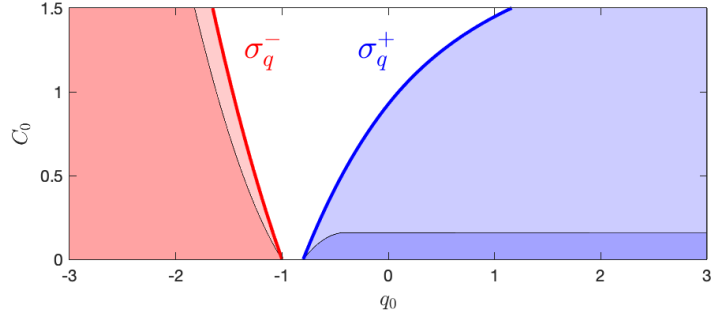}
  \caption{An illustration of the threshold regions for $(q_0, C_0)$
    with parameters $\psim=.8, \psiM=1$.
    Darker areas represent the rough conditions.}\label{fig:EA-q}
\end{figure}

\subsection{Critical thresholds in multi-dimensions}

We are ready to control $\rho$ and $G$.
In 1D, $G_0\geq0$ is the sufficient and necessary condition to insure
global regularity. It is not the case in multi-dimension, due to the
effect of the spectral gap.
Recall the dynamics of $G$
\[G'=-G^2+\psi G-(n-1)\frac{\zeta}{r}.\]
A similar argument as Proposition \ref{prop:EA-q-rough} would yield the
following rough conditions.
\begin{proposition}[Rough threshold conditions on
  $G$]\label{prop:EA-G-rough}~
  
  \begin{itemize}
    \item (Subcritical region) Let $C_0\leq\frac{\psim^2}{4(n-1)}$. If
  $G_0\geq\frac12\left(\psim-\sqrt{\psim^2-4(n-1)C_0}\right)$, then $G(t)$ stays bounded
  in all time.
    \item (Supercritical region) If
      $G_0<\frac12\left(\psiM-\sqrt{\psiM^2+4(n-1)C_0}\right)$, then
      $G(t)\to -\infty$ in finite time.
  \end{itemize}
\end{proposition}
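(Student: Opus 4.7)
The plan is to mirror the proof of Proposition~\ref{prop:EA-q-rough}, now applied to the scalar $G$-dynamics derived in~\eqref{eq:GdynGen}:
\[
G' = -G^2 + \psi G - (n-1)\frac{\zeta}{r}.
\]
Two ingredients feed the argument: the uniform sandwich $\psim \leq \psi \leq \psiM$ from Proposition~\ref{prop:psi}, and $|\zeta/r| \leq B(t) \leq C_0$ from Proposition~\ref{prop:zeta}. Substituting the worst-case constants on the right-hand side sandwiches it between two Riccati-type polynomials in $G$, after which everything reduces to scalar comparison along characteristics.

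For the subcritical half, I would set $\ell := \tfrac12\bigl(\psim - \sqrt{\psim^2-4(n-1)C_0}\bigr)$, which is real and nonnegative under the hypothesis $C_0 \leq \psim^2/(4(n-1))$ and is the smaller root of $G^2 - \psim G + (n-1)C_0 = 0$. Exactly as in Proposition~\ref{prop:EA-q-rough}, I would argue by contradiction that $G(t) \geq \ell$ for all $t \geq 0$: at a putative first crossing time $t_0$ with $G(t_0) = \ell$, one would have $G'(t_0) \leq 0$, but since $\ell \geq 0$ gives $\psi\ell \geq \psim\ell$, combining with $-(n-1)\zeta/r \geq -(n-1)C_0$ yields
\[
G'(t_0) \geq -\ell^2 + \psim\ell - (n-1)C_0 = 0,
\]
contradicting a strict decrease (made rigorous by the $\epsilon$-perturbation as in the cited proof). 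A symmetric contradiction from above using $G' \leq -G^2 + \psiM G + (n-1)C_0$ on $\{G \geq 0\}$ pins $G$ below $\tfrac12\bigl(\psiM+\sqrt{\psiM^2+4(n-1)C_0}\bigr)$, yielding uniform boundedness in time.

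For the supercritical half, on the region $\{G<0\}$ one extracts an upper-bound Riccati inequality of the form $G' \leq -G^2 + cG + (n-1)C_0$ by choosing the extremal value of $\psi \in [\psim,\psiM]$ appropriate to the sign of $G$ in the coefficient $+\psi G$, together with $-(n-1)\zeta/r \leq (n-1)C_0$. The negative root of the corresponding quadratic is precisely the blow-up threshold appearing in the statement; once $G_0$ lies strictly below this root, the right-hand side is strictly negative and behaves like $-G^2$ as $|G|\to\infty$, so a textbook Riccati comparison with the scalar autonomous ODE $y' = -y^2 + cy + (n-1)C_0$ drives $G\to-\infty$ in finite time. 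The main obstacle I anticipate is identifying the correct extremal $\psi$ on $\{G<0\}$---the sign of the linear coefficient here is opposite to that of $-\psi q$ in the $q$-dynamics, which flips the relevant endpoint of $[\psim,\psiM]$---and handling the non-strict inequality exactly at the threshold, both resolved by the same small-perturbation argument as in Proposition~\ref{prop:EA-q-rough}.
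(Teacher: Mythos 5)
Your subcritical argument is correct and is essentially the paper's own route: the paper proves only the $q$-version (Proposition~\ref{prop:EA-q-rough}) in detail and disposes of the $G$-version with ``a similar argument,'' and your first-crossing contradiction with the lower envelope $-G^2+\psim G-(n-1)C_0$ on $\{G\geq0\}$ (where $\psi G\geq\psim G$ is legitimate because the threshold $\ell$ is nonnegative), together with the upper envelope $-G^2+\psiM G+(n-1)C_0$ for the bound from above, is exactly that argument.

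The supercritical half, however, contains a genuine gap that you half-notice and then talk yourself out of. For $G<0$ the admissible upper envelope of $\psi G$ over $\psi\in[\psim,\psiM]$ is $\psim G$, not $\psiM G$ --- you correctly observe that the relevant endpoint flips relative to the $q$-dynamics, where the linear term is $-\psi q$. Hence the only valid comparison inequality is $G'\leq -G^2+\psim G+(n-1)C_0$, whose negative root is $\tfrac12\bigl(\psim-\sqrt{\psim^2+4(n-1)C_0}\bigr)$. This is \emph{strictly smaller} than the threshold $\tfrac12\bigl(\psiM-\sqrt{\psiM^2+4(n-1)C_0}\bigr)$ appearing in the statement whenever $\psim<\psiM$ and $C_0>0$, since $c\mapsto c-\sqrt{c^2+4K}$ is increasing. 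So your claim that ``the negative root of the corresponding quadratic is precisely the blow-up threshold appearing in the statement'' is incompatible with the endpoint flip you yourself identify: with the correct endpoint you prove blow-up only for $G_0<\tfrac12\bigl(\psim-\sqrt{\psim^2+4(n-1)C_0}\bigr)$, leaving the band between the two roots uncovered; with $c=\psiM$ the inequality $G'\leq -G^2+\psiM G+(n-1)C_0$ is simply false for $G$ negative and large (take $\psi=\psim$ and $\zeta=0$: one would need $(\psiM-\psim)|G|\leq(n-1)C_0$). To reach the proposition exactly as stated you would need an additional argument covering that band --- the literal transcription of the $q$-proof only yields the $\psim$-based threshold.
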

\begin{remark}
 In the special case when $\phi$ is a constant, we recover the sharp
 threshold: global wellposedness if and only if  $G_0\geq0$.
\end{remark}

\begin{remark}
  Let us compute the bound that $e_0=\div\u+\psi=G_0+(n-1)q_0$ has to
  satisfy,  using the rough subcritical conditions on $G_0$ and $q_0$
  \[e_0\geq
    -\frac{n-2}{2}\psim-\frac12\sqrt{\psim^2-4(n-1)C_0}-\frac{n-1}{2}\sqrt{\psim^2-4C_0}.\]
  In particular, for $n=2$, $e_0\geq-\sqrt{\psim^2-4C_0}$, which can
  be picked to be negative. Therefore, the subcritical region is much
  larger than \cite[Theorem 2.1]{he2017global}, which requires a
  tougher smallness condition on $C_0$, as well as $e_0\geq0$.
  Further improvement can be made by enhanced threshold conditions,
  stated in Propositions \ref{prop:EA-q-enhanced} and \ref{prop:EA-G-enhanced}.
\end{remark}

Next, we obtain enhanced threshold conditions on $G$, taking advantage
of the fact that $\zeta/r$ decays exponentially in time.
The result is similar to Proposition \ref{prop:EA-q-enhanced}, as 
the dynamics of $G$ also falls into a similar format as \eqref{eq:EA-qV}
\begin{equation}\label{eq:EA-GV}
  \begin{cases}
    G'=-G^2+c_1 G+c_2B,\quad c_1\in[\psim,\psiM],~~c_2\in[-(n-1),n-1],\\
    \frac{d}{dt}B=-\nu B,
  \end{cases}\quad
  \begin{cases}
    G(0)=G_0,\\ B(0)=C_0.
  \end{cases}
\end{equation}

We state the enhanced threshold conditions as follows. The thresholds
are illustrated in Figure \ref{fig:EA-G}. The regions are much larger
than the rough conditions.

\def\sGp{\sigma_G^+}
\def\sGm{\sigma_G^-}
\begin{proposition}[Enhanced threshold conditions on
    $G$]\label{prop:EA-G-enhanced}~
  
   \begin{itemize}
    \item (Subcritical region) There exists a function $\sGp:\R_+\to[-\psim,\infty)$,
      defined as
      \begin{equation}\label{eq:sGp}
\sGp(0)=0,\quad \frac{d}{dx}\sGp(x)=
\begin{cases}
\displaystyle\frac{n-1}{2\psim},&x\to0+\\
\displaystyle\frac{-\sGp(x)^2+\psim \sGp(x)-(n-1)x}{-\nu x}& x>0
\end{cases}
\end{equation}
      such that, if
      $G_0\geq\sGp(C_0)$, then $G(t)$ stays bounded  in all time.
    \item (Supercritical region) There exists a function $\sGp:\R_+\to[-\infty,-\psiM)$,
      defined as
\begin{equation}\label{eq:sGm}
\sGm(0)=0,\quad \frac{d}{dx}\sGm(x)=
\begin{cases}
\displaystyle-\frac{n-1}{\psiM+\nu},& x\to0+\\
\displaystyle\frac{-\sGm(x)^2+\psiM \sGm(x)+(n-1)x}{-\nu x}
&x>0.
\end{cases}
\end{equation}
such that, if 
      $G_0<\sGm(C_0)$, then $G(t)\to -\infty$ in finite time.
  \end{itemize}
\end{proposition}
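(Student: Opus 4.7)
The plan is to reduce Proposition \ref{prop:EA-G-enhanced} to a phase-plane analysis of the autonomous planar family \eqref{eq:EA-GV}, in direct parallel with the treatment of $q$ in Proposition \ref{prop:EA-q-enhanced}. Starting from the scalar evolution
\[G' = -G^2 + \psi G - (n-1)\frac{\zeta}{r}\]
along each characteristic, the uniform bounds $\psi\in[\psim,\psiM]$ from Proposition \ref{prop:psi} and $|\zeta/r|\leq B(t):=C_0e^{-\nu t}$ from Proposition \ref{prop:zeta} let us sandwich the true right-hand side by the worst-case instance of the two-parameter family \eqref{eq:EA-GV}. Adjoining $B$ as an auxiliary exponentially decaying variable recasts this into a planar autonomous system whose long-time behavior is entirely determined by its phase portrait in the $(G,B)$-plane.

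The equilibria of the planar system lie on the invariant axis $B=0$ and solve $G(c_1-G)=0$. A direct linearization shows that $(0,0)$ is a saddle (eigenvalues $c_1>0$ and $-\nu<0$), whereas $(c_1,0)$ is a stable node. The one-dimensional stable manifold $W^s$ of the saddle therefore organizes the half-plane $\{B>0\}$: trajectories on one side are attracted to $(c_1,0)$ and remain bounded, while trajectories on the other side escape to $G=-\infty$ in finite time. The threshold curves $\sGp$ and $\sGm$ are by construction the graphs of $W^s$ for those choices of $(c_1,c_2)$ that render the dynamics maximally unfavorable in the subcritical and supercritical regimes respectively.

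To identify the worst-case coefficients for $\sGp$, I would note that the stabilizing linear term $\psi G$ is least effective when $c_1=\psim$ (since in this regime $G\geq 0$ is being held close to $0$), and the nonlocal forcing is most destabilizing when $c_2=-(n-1)$. Writing the separatrix as a graph $G=\sGp(B)$ and using $dG/dB=G'/B'$ with these worst-case coefficients reproduces exactly the ODE in \eqref{eq:sGp}, the boundary condition $\sGp(0)=0$ being forced by the saddle; the stated initial slope is then recovered by a L'H\^opital expansion of the right-hand side at the $0/0$ indeterminacy as $B\to 0^+$. A mirror argument with $c_1=\psiM$ (appropriate now that $G<0$, so $\psi G$ is most harmful when $\psi$ is largest) and $c_2=n-1$ produces \eqref{eq:sGm}.

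The closing step is a differential-inequality comparison promoting the autonomous analysis to the true non-autonomous dynamics: if $(C_0,G_0)$ lies strictly above $\sGp$, the genuine right-hand side $-G^2+\psi G-(n-1)\zeta/r$ is pointwise bounded below by the corresponding worst-case right-hand side, so the true $G(t)$ remains above $W^s$ for all time and hence is bounded; dually, if $(C_0,G_0)$ lies below $\sGm$, comparison forces $G\to-\infty$ in finite time. The step I expect to be most delicate is the transversality check along the separatrix, ensuring that $W^s$ is genuinely invariant and monotone in $B$ under every admissible right-hand side, together with the $B\to 0^+$ asymptotic matching that certifies $W^s$ emanates from the saddle with the precise slope quoted in \eqref{eq:sGp}; both hinge on the sign structure of the worst-case coefficients lining up correctly with the direction of the phase flow.
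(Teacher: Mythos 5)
Your proposal takes the same route as the paper, which offers no self-contained proof here: it reduces the $G$-dynamics to the autonomous family \eqref{eq:EA-GV} using Propositions \ref{prop:zeta} and \ref{prop:psi}, and then appeals to the phase-plane-plus-comparison analysis of \cite[Theorem 5.1]{tadmor2014critical} --- exactly the saddle/stable-node/separatrix picture you describe. Two caveats. First, your L'H\^opital matching at the saddle generically yields the slope $\tfrac{n-1}{\psim+\nu}$, where $\psim$ denotes the lower bound of $\psi$; this agrees with the stated $\tfrac{n-1}{2\psim}$ only because in this paper that lower bound coincides with the decay rate $\nu$ (both equal $\phi(2D)\|\rho_0\|_{L^1}$), a coincidence you should make explicit. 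Second, your selection rule for the supercritical coefficients is logically inverted: to \emph{guarantee} blowup for every admissible realization of $\psi(t)\in[\psim,\psiM]$ and $|\zeta/r|\le B(t)$, the barrier must dominate the \emph{most anti-blowup} admissible right-hand side, not the ``most harmful'' one. Concretely, on the barrier $G=\sGm(B)\le 0$ the valid upper bound on the true flux uses $\psim G$ (since $\psi G\le \psim G$ for $G\le 0$), so the transversality inequality you need reads $\psim\sGm\le \psiM\sGm$, which fails for $\sGm<0$; the supercritical half of your comparison argument therefore does not close as written and must be repaired (or the choice of coefficient justified as in the cited reference) before the blowup conclusion is established.
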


\begin{figure}[t]
  \includegraphics[width=.7\linewidth]{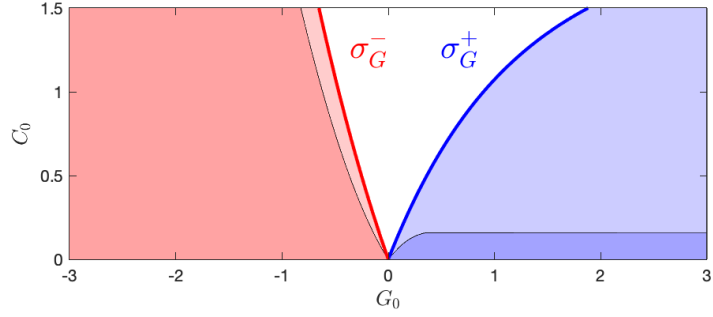}
  \caption{An illustration of the threshold regions for $(G_0, C_0)$
    with parameters $\psim=.8, \psiM=1, n=2$.
    Darker areas represent the rough conditions.}\label{fig:EA-G}
\end{figure}

\begin{remark}
  The threshold curves $\sGp$ and $\sGm$ are dimension dependent.
  In the case $n=1$, one can check that $\sGp\equiv0$ and
  $\sGm\equiv0$. It recovers the sharp critical threshold condition in
  1D, stated in Theorem \ref{thm:EA-1D}.

  For $n\geq2$, we have $\sGp(x)>0$ and
  $\sGm(x)<0$ for $x>0$. There is a gap between the two regions, due
  to the nonlocal effect.
  The gap becomes larger as $n$ increases, illustrated in Figure
  \ref{fig:EA-Gn}. There is no gap when $C_0=0$ (when $\phi$ is a
  constant), regardless of the dimension.
\end{remark}

\begin{figure}[t]
  \includegraphics[width=.7\linewidth]{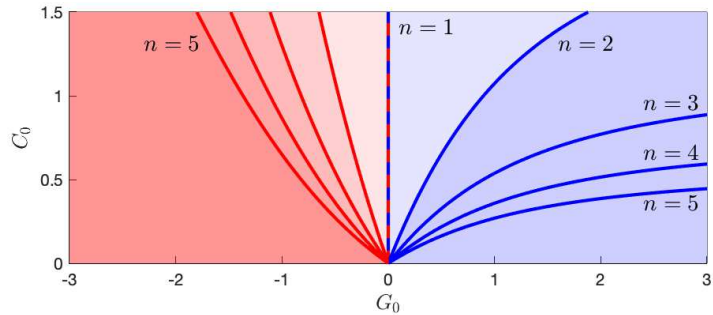}
  \caption{An illustration of threshold curves $\sGp$ and $\sGm$ in different dimensions
    $n=1,\cdots, 5$, with parameters $\psim=.8, \psiM=1$.}\label{fig:EA-Gn}
\end{figure}

Finally, we wrap up the proof of Theorem \ref{thm:EA}.

\begin{proof}[Proof of Theorem \ref{thm:EA}]
 For subcritical initial data, applying Propositions
 \ref{prop:EA-q-enhanced} and \ref{prop:EA-G-enhanced}, we obtain
 the boundedness of $q$ and $G$.
 As $\psi$ is bounded (Proposition \ref{prop:psi}), we get
 $p=G-\psi$ is also bounded.
 Then, Proposition \ref{prop:equivgradu} implies the boundedness of
 $\grad\u$, and global wellposedness is the direct consequence of
 Theorem \ref{thm:EAlocal}. The asymptotic flocking behavior follows
 from Theorem \ref{thm:flock}.

 For supercritical initial data, from Propositions
 \ref{prop:EA-q-enhanced} and \ref{prop:EA-G-enhanced}, we deduce that
 either $q$ or $G$ blows up (which is equivalent to $p$ blows up due
 to the boundedness of $\psi$). Therefore, $\grad\u$ becomes
 unbounded, resulting a loss of regularity in finite time.  
\end{proof}

\section{Further discussion}\label{sec:conclusion}
In this paper, we introduce a new pair of quantities $(u_r,
\frac{u}{r})$, which serve as a nice replacement of the 1D quantity
$\pa_xu$, for pressure-less Eulerian dynamics in multi-dimensions
with radial symmetry.
The applications to the Euler-Poisson equations and the
Euler-alignment equations show significant advantages of studying the
dynamics of the pair, compared to the spectral dynamics (on eigenvalues
of $\grad\u$), as well as the divergence $\div\u$.
The idea has the great potential to be applied to a large class of Eulerian dynamics
with different forces.

There are several possible extensions.
\begin{enumerate}
\item \textit{Systems with pressure.}
  Pressure appears naturally in many models of Eulerian dynamics.
  For the 1D Euler equation with isentropic pressure (known as the
  $p$-system), the Riemann invariants are introduced to handle the
  pressure. The quantities that are relevant to global regularity are
  $\pa_x(u\pm c(\rho))$, where $c(\rho)$ is the sound speed. Global
  regularity has been shown for the $p$-system \cite{chen2015optimal}
  and the Euler-Poisson equations with pressure
  \cite{tadmor2008global} in 1D.
  Global regularity in multi-dimensions is largely unknown. It is
  interesting to understand which quantities serve as a nice
  replacement of $\pa_x(u\pm c(\rho))$ in multi-dimensions with radial symmetry.
\item \textit{Radially symmetric flow with swirl.}
  Radially symmetric solutions can allow swirls. For instance, in 2D,
  $u(\x)=\frac{\x}{r}u(r)+\frac{\x^\perp}{r}R(r)$.
  It is known that rotations can prevent singularity formation
  \cite{liu2004rotation}. Our global regularity result has the potential to be extended
  to radially symmetric data with swirl.
\item \textit{Perturbation around a radially symmetric solution.}
 One next step is to study a non-symmetric perturbation around the
 radially symmetric solution. This would allow us to extend the result
 to a larger class of solutions. 
\end{enumerate}

We leave all these intriguing problems for further investigation.

\bibliographystyle{plain}
\bibliography{Eulerian}

\end{document}